\newcommand\notsotiny{\@setfontsize\notsotiny{7}{7.5}}
\tikzset{every picture/.style={line width=0.75pt}} 
\newtheorem{thm}{Theorem}[section]
\newtheorem{lem}[thm]{Lemma}
\newtheorem{cor}[thm]{Corollary}
\newtheorem{prop}[thm]{Proposition}
\theoremstyle{remark}
\newtheorem{remark}{Remark}[section]
\newcommand{\mi}{\mathrm{i}}
\newcommand{\md}{\mathrm{d}}
\newcommand{\me}{\mathrm{e}}
\newcommand{\eqd}{\overset{d}{=}}
\newcommand{\N}{\mathbb{N}}
\newcommand{\E}{\mathbb{E}}
\newcommand{\p}{\mathbb{P}}
\newcommand{\R}{\mathbb{R}}
\newcommand{\RP}{\mathbb{R}_+}
\newcommand{\ve}{\varepsilon}
\newcommand{\sign}{\mathrm{sgn}}
\newcommand{\Oh}{\mathcal{O}}
\newcommand{\Unif}{\mathrm{U}}
\newcommand{\Normal}{\mathrm{N}}
\newcommand{\rmS}{\mathrm{S}}
\newcommand{\mF}{\mathcal{F}}
\newcommand{\mG}{\mathcal{G}}
\newcommand{\mT}{\mathcal{T}}
\newcommand{\Exp}{\mathrm{Exp}}
\newcommand{\Gam}{\mathrm{Gamma}}
\newcommand{\erf}{\mathrm{erf}}
\newcommand{\les}{\leqslant}
\newcommand{\ges}{\geqslant}
\newcommand{\wt}[1]{\widetilde{#1}}
\numberwithin{equation}{section}
\numberwithin{figure}{section}
\newcommand{\1}{\mathbbm{1}}
\newcommand{\ov}{\overline}
\newcommand{\indep}{\perp \!\!\! \perp}
\DeclareMathOperator*{\esssup}{ess\,sup}
\begin{document}

\title{Fast exact simulation of the first passage of a tempered stable subordinator across a non-increasing function}

\author{
Jorge Ignacio Gonz\'alez C\'azares\footnote{Universidad Nacional Autónoma de México, IIMAS, \texttt{jorge.gonzalez@sigma.iimas.unam.mx}}\and
Feng Lin\footnote{University of Warwick, \texttt{feng.lin.1996@outlook.com}}\and
Aleksandar Mijatovi\'c\footnote{University of Warwick \& The Alan Turing Institute, \texttt{a.mijatovic@warwick.ac.uk}}}

\maketitle
\begin{abstract}
We construct a fast exact algorithm for the simulation of the first-passage time, jointly with the undershoot and overshoot, of a tempered stable subordinator over an arbitrary non-increasing absolutely continuous function. We prove that the running time of our algorithm has finite exponential moments and provide bounds
on its expected running time with explicit 
dependence on the characteristics of the process and the initial value of the function. 
The expected running time grows at most cubically in the stability parameter (as it approaches either $0$ or $1$) and is linear in the tempering parameter and the initial value of the function. 
Numerical performance, based on the implementation in the dedicated GitHub repository, exhibits a good agreement with our theoretical bounds.
We provide numerical examples to illustrate the performance of our algorithm in Monte Carlo estimation.
\end{abstract}

\noindent
{\em Key words:}
exact simulation; subordinator; first passage of subordinator; overshoot and undershoot of a subordinator;  complexity.

\noindent
{\em AMS Subject Classification 2020:}  60G51, 65C05 (Primary); 62E15,  60E07 (Secondary).

\tableofcontents

\section{Introduction}
\label{section:intro}

The study of the first-passage event across a barrier is a classical subject that has long been of interest for many stochastic processes, including L\'evy processes. For instance, first-passage events describe the ruin event (i.e. the ruin time and penalty function) in risk theory~\cite[Ch.~12]{doi:10.1142/7431}, are used to describe the law of the steady-state waiting time and workload in queuing theory~\cite{MR1978607} and arise in the probabilistic representation of the solutions to fractional partial differential equations (FPDEs) (see e.g.~\cite{hernandez2017generalised} and the references therein), as well as in financial mathematics (in payoffs of  barrier-type derivative securities). In all such areas, simulation algorithms and Monte Carlo methods are of great interest and widely used~(see~\cite[Ex.~5.15]{MR2331321}). However, direct biased Monte Carlo methods, based on random walk approximations of the crossing time, may lead to errors that are often difficult and computationally expensive to control, see e.g. Subsection~\ref{subsec:naiveMC} below. Fast exact simulation of the crossing time of a level by a subordinator is thus needed for a stable Monte Carlo algorithm in these contexts. For example, in~\cite{MR4219829}, exact simulation of the first-passage time of a stable subordinator was used to solve numerically an FPDE with a Caputo fractional derivative~\cite{MR2854867}.  Generalising~\cite{MR2854867} beyond Caputo fractional derivatives requires an exact simulation algorithm for more general subordinators, along with an \textit{a priori} bound on its expected running time. 
The exact simulation algorithm in~\cite{chi2016exact}, applicable to a broad class of subordinators, typically turns out to  have an infinite expected running time (see Subsection~\ref{subsec:Chi} below), making it hard to exploit the general algorithm in~\cite{chi2016exact} in this context.

The present paper develops an exact simulation algorithm for the first-passage event of a tempered stable subordinator across a non-increasing function, such that its random running time has finite exponential moments with explicit (in terms of model parameters) control over the expected running time. While our algorithm is widely applicable to problems in applied probability discussed above
when the underlying model is a finite variation 
tempered stable process (cf.~\cite{MR3500619} for examples in financial mathematics),
our main interest in this algorithm stems from our aim to generalise~\cite{MR4219829} to tempered fractional derivatives and possibly, in future work, to more general non-local operators using  ideas from~\cite{chi2016exact,MR4122822}. 

\subsection{The complexity of the main exact simulation algorithm}
\label{subsec:intro-complexity}

A subordinator $S=(S_t)_{t\in\RP}$ (where $\RP\coloneqq[0,\infty)$) is a non-decreasing process with independent and stationary increments and right-continuous paths, started at zero $S_0=0$. It is well known that $S_t=\mu t+\sum_{s\in(0,t]}J_s$, where $\mu\in\RP$ is  a non-negative drift parameter and the (positive) jumps $\{(s,J_s):s>0, J_s>0\}$ form a Poisson point process with mean measure $\md t\otimes \nu(\md x)$ on $(0,\infty)\times (0,\infty)$, where $\nu$ is the \emph{L\'evy measure} of $S$, satisfying $\int_0^\infty\min\{1,x\}\nu(\md x)<\infty$ (see 
 details in~\cite[Def.~8.2]{ken1999levy}). 
In this work we consider a \emph{driftless tempered stable subordinator}~$S$: $\mu=0$ and $\nu(\md x)\coloneqq\theta(\alpha/\Gamma(1-\alpha))\me^{-qx}x^{-\alpha-1}\md x$ for some (fixed) $\theta>0$, $\alpha\in(0,1)$ and $q\in[0,\infty)$. The first-passage event of $S$ is described by the random element 
\[
\big(\tau_b,
    S_{\tau_b-},
    S_{\tau_b}\big),
\qquad
\text{where}\quad 
\tau_b\coloneqq\inf\{t>0\,:\,S_t>b(t)\}
\]
is the first crossing time of $S$ over a non-increasing absolutely continuous function $b:[0,\infty)\to[0,\infty)$, with $b(0)\in(0,\infty)$. Since $S_0=0$, we have $0<\tau_b<\infty$ almost surely and the variable $S_{\tau_b-}\coloneqq \lim_{t\uparrow\tau_b}S_t$, 
referred to as the \textit{undershoot} in this paper, 
gives the position of the subordinator $S$ just before it crosses the function $b$.
The position of the subordinator $S$ at first crossing time $\tau_b$ over $b$ is given by $S_{\tau_b}$. 
\hyperref[alg:improved_triple_temper_stable]{TSFFP-Alg} below, which is the main simulation algorithm of this paper, samples from the law of the vector $(\tau_b,S_{\tau_b-},S_{\tau_b})$.

Denote by $\mT$ the (random) running time  of our main algorithm \hyperref[alg:improved_triple_temper_stable]{TSFFP-Alg}. Here and throughout we  assume that elementary operations such as addition, multiplication  and evaluation of elementary functions $\sin$, $\cos$, $\exp$ all have constant computational cost. Further, we assume all operations use floating point precision and denote by $N\in\N$ the desired number of precision bits of the output. Then $\mT$ has finite exponential moments (up to some order) and its mean is bounded above by
\[ 
\kappa_{\hyperref[alg:improved_triple_temper_stable]{TSFFP}}
(1+qb(0)/\alpha)((1-\alpha)^{-3}+|\log\alpha|+\log N)/\alpha,
\]
where the constant $\kappa_{\hyperref[alg:improved_triple_temper_stable]{TSFFP}}$ does not depend on  $\alpha\in(0,1)$, $q\in[0,\infty)$ or $b$ (see Corollary~\ref{cor:improved_tempered_expected_time} below). Moreover, $\kappa_{\hyperref[alg:improved_triple_temper_stable]{TSFFP}}$  can be made explicit in terms of the costs of elementary operations listed above. 

Since the law of the vector $(\tau_b,S_{\tau_b-},S_{\tau_b})$ degenerates in any of the limiting regimes, $\alpha\uparrow1$, $\alpha\downarrow0$, $q\to\infty$ or $b(0)\to\infty$, 
the deterioration in the performance of the algorithm in these regimes is expected.
 Indeed, the subordinator $S$ converges weakly to a linear drift as $\alpha\uparrow1$ (see the stable characteristic function in~\eqref{eq:char_fun_stable} below), making the law of the undershoot $S_{\tau_b-}$ increasingly harder to simulate from. If instead $\alpha\downarrow0$ or $q\to\infty$, then the subordinator $S$ converges weakly to the trivial process $0$ for which the first-passage time $\tau_b$ is infinite. Finally, if $b\equiv b(0)$ increases, then the crossing time $\tau_b$ increases, extending the running time of several loops in the algorithm. The main point of the formula in the display above
 is that it provides an explicit insight into the deterioration  of the performance of the algorithm in any of the limiting regimes.
 There is a good match between this theoretical upper bound on the performance and what we observe through the implementation found in the repository~\cite{repository}, see Subsection~\ref{sec:numerics} below for details. 

The key step in the exact simulation 
of the first-passage triplet $(\tau_b,S_{\tau_b-},S_{\tau_b})$
for a general tempering parameter $q\in[0,\infty)$ (\hyperref[alg:improved_triple_temper_stable]{TSFFP-Alg} below)
 is the stable case $q=0$, given in~\hyperref[alg:triple_stable_conditional_on_time]{SFP-Alg} below. 
The general case $q\in(0,\infty)$  in \hyperref[alg:improved_triple_temper_stable]{TSFFP-Alg} requires only an additional accept-reject step based on the Esscher transform described below. In turn, the simulation of the first-passage event of a stable process in~\hyperref[alg:triple_stable_conditional_on_time]{SFP-Alg} is essentially reduced to the simulation of the undershoot $S_{\tau_b-}$ on the event 
$\{S_{\tau_b-}<b(\tau_b)\}$
when the first passage did not occur continuously (i.e. by \textit{creeping}\footnote{Note that creeping occurs with positive probability 
$\p[S_{\tau_b-}=b(\tau_b)]>0$
 when the boundary function $b$ is not constant, even though the tempered stable subordinator has no drift, see~\cite{https://doi.org/10.48550/arxiv.2205.06865,chi2016exact}.}, which occurs on the event $\{S_{\tau_b-}=b(\tau_b)\}$, cf. Subsection~\ref{subsec:joint_law_description} below). Sampling the undershoot efficiently, which turns out to be deceivingly hard (see Subsection~\ref{subsec:How_not_to_sample} below), is the main technical contribution of this paper.  
Although the density of the undershoot $S_{\tau_b-}$, conditional on $\{\tau_b=t\}$, admits a semi-analytic explicit form (see~\eqref{eq:undershoot_law} below), it is unbounded and does not lend itself well to direct accept-reject methods. Our main strategy to circumvent this issue is to break up the state space $(0,b(t))$ of $S_{\tau_b-}|\{\tau_b=t\}$ into subintervals, depending on the values of the (possibly random) parameters, and on each subinterval use an appropriate   accept-reject algorithm. 
The algorithm in~\cite{devroye2012note}, see \hyperref[alg:Devroye]{LC-Alg} in Appendix~\ref{sec:devroye} below, plays a key role precisely in the subinterval where the density of $S_{\tau_b-}$ decreases sharply from a very high value to super-exponentially small values, making the standard accept-reject bounds hard to find.

Most algorithms developed and used in this paper are based on a combination of two components: (I) a numerical search step in the form of binary search (also known as the bisection method), possibly followed by an application of the Newton--Raphson method (rigorous care is taken to verify the assumptions that guarantee quadratic convergence of the Newton--Raphson algorithm) and (II) rejection (or accept-reject) sampling. For both components, we do a careful complexity analysis accounting for all computational operations. This is crucial because parameter values themselves are often random, requiring control over their distribution as well as the impact on the computational complexity when these random parameters take extreme values. Note that the infinite expected running time of the algorithm in~\cite{chi2016exact} arises essentially due to an accept-reject step having a random acceptance probability with large mass close to zero, see Subsection~\ref{subsec:Chi} below for details.

\subsection{Comparison with the literature}

Exact simulation of the first-passage event of a tempered stable subordinator is a central topic of two articles:~\cite{chi2016exact} and~\cite{MR4122822}. \cite{chi2016exact} was the first to develop a simulation algorithm for the first-passage event for a wide class of subordinators  over a general non-decreasing absolutely continuous boundary functions~$b$. The case of the tempered stable subordinator plays a prominent role in~\cite{chi2016exact}, because the entire general class
of subordinators considered in~\cite{chi2016exact}
can be viewed as a modification of the tempered stable case. Indeed, the main algorithm in~\cite{chi2016exact} relies on this structure. As discussed above, the algorithm for the tempered stable first-passage event 
in~\cite{chi2016exact}  typically has infinite expected running time (see Subsection~\ref{subsec:Chi}), making it hard to use in Monte Carlo estimation. 

\cite{MR4122822} consider a tempered stable subordinator and a constant  function $b$. They 
develop an exact simulation algorithm~\cite[Alg.~4.1]{MR4122822}
for the pair $(\tau_b,S_{\tau_b}-b(\tau_b))$. In this case, the problem is first reduced to the stable case via the Esscher transform. Then the constant barrier makes the density of $(\tau_b,S_{\tau_b}-b)$ explicit with a stable scaling property. The scaling property in turn allows for a change of variables (in the form of the variable $r=y^{-\alpha/(1-\alpha)}t^{1/(1-\alpha)}$ in~\cite[l.11, \S4.2]{MR4122822}) that makes it easy to sample from the resulting density. This technique,
which depends on the explicit density of the random vector $(\tau_b,S_{\tau_b}-b)$,
does not generalise easily to a non-constant function~$b$, including a linear function $b(t)= a_0-a_1t$ with $a_1>0$. Part of the difficulty in generalising such methodology is that a new possibility arises: the subordinator may \emph{creep} (i.e. cross continuously~\cite{https://doi.org/10.48550/arxiv.2205.06865,chi2016exact}) through the curve $b$, in which case $S_{\tau_b-}=b(\tau_b)=S_{\tau_b}$. Moreover, the probability of creeping is not uniform in time even for linear functions (see Proposition~\ref{prop:joint_law}(b) below), making it hard to compute even the mass of the absolutely continuous component of the joint law.

\subsection{Organisation of the paper}

The main algorithms and the results on their respective computational complexities are provided in Section~\ref{sec:main_simulation}. In Section~\ref{sec:applications} we present numerical evidence for the theoretical bounds on the computational complexities of our algorithms. We also present two numerical applications of our simulation algorithms, using Monte Carlo estimation, to solve fractional partial differential equations and to price barrier options. 
The GitHub repository~\cite{repository} contains the implementation (in Julia) of the algorithms in this paper, used in Section~\ref{sec:applications}.
Finally, the proofs of all our results on the validity and computational complexities of our algorithms are given in Section~\ref{sec:proofs}. 
See~\cite{Presentation_Jorge} for a short \href{https://youtu.be/dO-cQeABHdM}{YouTube} presentation 
on \href{https://www.youtube.com/@prob-am7844}{Prob-AM} channel
discussing the algorithm. 

\section{Sampling the first passage of a tempered stable subordinator}
\label{sec:main_simulation}

For any stability index $\alpha\in(0,1)$, intensity $\theta\in(0,\infty)$ and tempering parameter $q\in\RP$, let $S=(S_t)_{t\in\RP}$ be, under $\p_q$ (with expectation operator $\E_q$), a driftless tempered stable subordinator started at $0$ with Laplace transform and L\'evy measure
\begin{equation}
\label{eq:char_fun_stable}
\E_q[\me^{-uS_1}]
=\exp\big((q^\alpha-(u+q)^\alpha)\theta\big),
    \enskip u\ges0
\quad \&\quad \nu_q(\md x)
=(\alpha/\Gamma(1-\alpha))\theta\me^{-qx}x^{-\alpha-1}\md x,
    \enskip x>0.
\end{equation}
Note that, under $\p\coloneqq\p_0$, $S$ is a stable subordinator with L\'evy measure $\nu\coloneqq\nu_0$. 

\subsection{Dependence of the algorithms for the simulation of \texorpdfstring{$(\tau_b,
    S_{\tau_b-},
    S_{\tau_b})$ under $\p_q$}{First-passage triplet}}
\begin{figure}[ht]
\caption{Dependence of algorithms ($B\leftarrow A$ means algorithm $A$ calls algorithm $B$)\label{fig:alg_dependence}}\centering
\begin{tikzpicture}[x=0.73pt,y=0.75pt,yscale=-1,xscale=1]

\draw [->] (411,20.8) -- (195.6,20.8) ;
\draw  [->] (30,50) .. controls (29.01,79.7) and (29.49,80) .. (78.11,80.38) ;
\draw  [->] (30,50) .. controls (29.51,129.2) and (29.5,129.99) .. (78.76,130.45) ;
\draw  [->] (330.96,210.93) .. controls (330.96,175.65) and (330.29,172.07) .. (358.5,170.63) ;
\draw  [->] (330.96,210.93) .. controls (330.28,103.13) and (329.63,100.98) .. (359.19,100.31) ;
\draw  [->] (160.82,149.47) .. controls (159.53,179.72) and (159.98,180.48) .. (135.55,180.4) ;
\draw  [->] (160.82,149.47) .. controls (160.02,243.1) and (160,240.74) .. (178.25,240.06) ;
\draw  [->] (460.67,99.67) -- (518.27,99.92) ;
\draw  [->] (461.29,169.6) .. controls (509.12,169.93) and (473,100.35) .. (518.85,99.92) ;
\draw  [->] (460.67,169.67) -- (519.27,169.92) ;
\draw  [->] (30,50) -- (29.51,158) ;

\draw    (12,12) -- (195,12) -- (195,50) -- (12,50) -- cycle  ;
\draw (15,16) node [anchor=north west][inner sep=0.75pt]  [font=\small] [align=left] {\textbf{\hyperref[alg:triple_temper_stable]{TSFP-Alg}}\\{\notsotiny Sampling $\displaystyle ( \tau _{c} ,S_{\tau _{c} -} ,S_{\tau _{c}})$ under $\displaystyle \p_{q}$}};
\draw    (411,12) -- (616,12) -- (616,50) -- (411,50) -- cycle  ;
\draw (414,16) node [anchor=north west][inner sep=0.75pt]  [font=\small] [align=left] {\hyperref[alg:improved_triple_temper_stable]{\textbf{TSFFP-Alg}}\\{\notsotiny Fast sampling of $\displaystyle ( \tau _{b} ,S_{\tau _{b} -} ,S_{\tau _{b}})$ under $\displaystyle \mathbb{P}_{q}$}};
\draw (210.67,24.67) node [anchor=north west][inner sep=0.75pt]  [font=\scriptsize] [align=left] {Repeated application of \hyperref[alg:triple_temper_stable]{TSFP-Alg} to  \\capped boundary $c(t)=\min\{b(t) ,R\}$};
\draw    (80,59.67) -- (206,59.67) -- (206,97.67) -- (80,97.67) -- cycle  ;
\draw (83,63.67) node [anchor=north west][inner sep=0.75pt]  [font=\small] [align=left] {\hyperref[alg:tempered_stable]{\textbf{TS-Alg}}\\{\notsotiny Sampling $\displaystyle S_{t}$ under $\displaystyle \mathbb{P}_{q}$}};
\draw    (80.67,111) -- (258.67,111) -- (258.67,149) -- (80.67,149) -- cycle  ;
\draw (83.67,115) node [anchor=north west][inner sep=0.75pt]  [font=\small] [align=left] {\textbf{\hyperref[alg:triple_stable_conditional_on_time]{SFP-Alg}}\\{\notsotiny Sampling $\displaystyle ( \tau _{c} ,S_{\tau _{c} -} ,S_{\tau _{c}})$ under $\displaystyle \mathbb{P}$}};
\draw    (11,160.33) -- (132,160.33) -- (132,198.33) -- (11,198.33) -- cycle  ;
\draw (14,164.33) node [anchor=north west][inner sep=0.75pt]  [font=\small] [align=left] {\hyperref[alg:stable]{\textbf{S-Alg}}\\{\notsotiny Sampling $\displaystyle S_{t}$ under $\displaystyle \mathbb{P}$}};
\draw    (180.33,211) -- (384.33,211) -- (384.33,271) -- (180.33,271) -- cycle  ;
\draw (183.33,215) node [anchor=north west][inner sep=0.75pt]  [font=\small] [align=left] {\textbf{\hyperref[alg:undershoot_stable]{SU-Alg}}\\{\notsotiny Sampling $\displaystyle S_{\tau _{c} -}$ under $\displaystyle \mathbb{P}$, conditional on }\\{\notsotiny $\displaystyle \{\tau _{c} =t,c( \tau _{c}) =w,\Delta _{S}( \tau _{c})  >0\}$}};
\draw    (361,81.33) -- (461,81.33) -- (461,128.33) -- (361,128.33) -- cycle  ;
\draw (364,85.33) node [anchor=north west][inner sep=0.75pt]  [font=\small] [align=left] {\textbf{\hyperref[alg:psi_1]{$\psi^{(1)}$-Alg}}\\{\notsotiny Sampling from $\displaystyle \tilde{\psi }_{s}^{( 1)}$}};
\draw    (361,141.33) -- (461,141.33) -- (461,188.33) -- (361,188.33) -- cycle  ;
\draw (364,145.33) node [anchor=north west][inner sep=0.75pt]  [font=\small] [align=left] {\textbf{\hyperref[alg:psi_2]{$\psi^{(2)}$-Alg}}\\{\notsotiny Sampling from $\displaystyle \tilde{\psi }_{s}^{( 2)}$}};
\draw    (521,84) -- (624,84) -- (624,139) -- (521,139) -- cycle  ;
\draw (524,88) node [anchor=north west][inner sep=0.75pt]  [font=\small] [align=left] {\hyperref[alg:Devroye]{\textbf{LC-Alg}}\\{\notsotiny Sampling from a }\\{\notsotiny log-concave density}};
\draw    (521,151) -- (651,151) -- (651,189) -- (521,189) -- cycle  ;
\draw (524,155) node [anchor=north west][inner sep=0.75pt]  [font=\small] [align=left] {\hyperref[alg:inversion_newton_raphson]{\textbf{NR-Alg}}\\{\notsotiny Newton-Raphson method}};
\end{tikzpicture}
\end{figure}
Figure~\ref{fig:alg_dependence} above summarises the dependence between the algorithms used to simulate
the triplet $(\tau_b,S_{\tau_b-},S_{\tau_b})$ for the tempered stable subordinator $S$. Algorithm
\hyperref[alg:improved_triple_temper_stable]{TSFFP-Alg} essentially reduces the problem to the same problem in the stable case, dealt with by~\hyperref[alg:triple_stable_conditional_on_time]{SFP-Alg}.
The hardest step of the stable first-passage algorithm \hyperref[alg:triple_stable_conditional_on_time]{SFP-Alg}
consists of the simulation of the undershoot $S_{\tau_b-}$, performed by \hyperref[alg:undershoot_stable]{SU-Alg}, which in turn relies on \hyperref[alg:psi_1]{$\psi^{(1)}$-Alg} and \hyperref[alg:psi_2]{$\psi^{(2)}$-Alg}.
These two rejection-sampling algorithms are tailored to the specific densities 
$\tilde \psi_s^{(1)}$ and $\tilde \psi_s^{(2)}$
that arise in our problem (see Proposition~\ref{prop:undershoot} for the definition). Upon noting that a certain function (related to $\tilde \psi_s^{(1)}$) is log-concave, \hyperref[alg:psi_1]{$\psi^{(1)}$-Alg} essentially only calls \hyperref[alg:Devroye]{LC-Alg}, a very fast rejection-sampling procedure for log-concave densities.  
\hyperref[alg:psi_2]{$\psi^{(2)}$-Alg}  requires a partitioning of the state space, which relies on numerical inversion of elementary functions, and finely tuned rejection sampling. The details are given in the remainder of this section.

\subsection{Sampling the first passage of a tempered stable subordinator}
Before presenting our simulation algorithm \hyperref[alg:triple_temper_stable]{TSFP-Alg}, we give a brief intuitive account. The measures $\p_q$ (for $q\ges0$) are known to be equivalent to each other (see Appendix~\ref{app:temper}). In fact, the law of the trajectory $(S_t)_{t\in[0,T]}$ (for any $T>0$) under $\p_q$ is equal to the law under $\p_0$ weighted by the change-of-measure $\exp(-qS_T+q^\alpha \theta T)$. Since this change-of-measure is bounded, it can be used for rejection sampling of $(S_t)_{t\in[0,T]}$ under $\p_q$: propose a path on $[0,T]$ under $\p_0$ and accept it with probability $\me^{-qS_T}$, which is on average equal to $\exp(-q^\alpha \theta T)$, see Lemma~\ref{lem:tempered_law} in Appendix~\ref{app:temper} below for details. Thus, it is natural to expect that the simulation of the first-passage triplet $(\tau_b,S_{\tau_b-},S_{\tau_b})$ under $\p_q$ can essentially be reduced to the case $q=0$ on the stochastic interval $[0,\tau_b]$. However, since $\tau_b$ may have unbounded support, the change-of-measure $\exp(-qS_{\tau_b}+q^\alpha \theta \tau_b)$ may also be unbounded, making a direct rejection-sampling algorithm for $\p_q$ infeasible. \hyperref[alg:triple_temper_stable]{TSFP-Alg} makes use of the change-of-measure idea by introducing a time grid and sampling $S$ on the grid until the compact interval in which $\tau_b$ lies is identified. 

\begin{algorithm}
\caption{(TSFP-Alg) Tempered Stable First-Passage  Algorithm: samples $(\tau_b,S_{\tau_b-},S_{\tau_b})$ under~$\p_q$}
\label{alg:triple_temper_stable}
\begin{algorithmic}[1]
\Require{Parameters $\alpha\in(0,1)$, $\theta>0$, $q\ges 0$ and function $b:[0,\infty)\to[0,\infty)$}
\State{Set $t_*\gets(2qb(0)+1-2^{-\alpha})/((2^\alpha-1)q^\alpha\theta)$, $T\gets 0$, $U\gets0$ and $c(\cdot)\gets b(\cdot)$}
\Repeat\label{line:loop_until_tau<t_*}
\State{Sample $s$ from the law of $S_{t_*}$ under $\p_q$ via \hyperref[alg:tempered_stable]{TS-Alg}}
\If{$s<c(t_*)$}
\State{Update $T\gets T+t_*$, $U\gets U+s$ and $c(\cdot)\gets c(\cdot+t_*)-s$}
\EndIf
\Until{$s\ges c(t_*)$}
    \Repeat
    \State{Sample $(\tau,u,v)$ via \hyperref[alg:triple_stable_conditional_on_time]{SFP-Alg} with $\alpha,\,\theta,\,t_*,\,c$}
    \Comment{Law of $(\tau_c,S_{\tau_c-},S_{\tau_c})|\{\tau_c\les t_*\}$ under $\p$}
    \State{Sample $w$ from the law of $S_{t_*-\tau}$ under $\p$ via \hyperref[alg:stable]{S-Alg} and $E\sim\Exp(1)$} 
    \Until{$E\ges q(w+v)$}\label{line:check_temper}
    \State{\Return $(T+\tau,U+u,U+v)$}
\end{algorithmic}
\end{algorithm}

\hyperref[alg:triple_temper_stable]{TSFP-Alg} makes use of \hyperref[alg:stable]{S-Alg} and~\hyperref[alg:tempered_stable]{TS-Alg}, well-known simple and fast procedures for sampling stable and tempered stable marginals, respectively (see Appendix~\ref{app:temp_stable_marginal} below). In lines~2--7, \hyperref[alg:triple_temper_stable]{TSFP-Alg} consecutively samples $S_T$ (and stores the value in $U$) over the grid $T\in\{t_*,2t_*,\ldots\}$ (i.e., the variable $T$ keeps track of time) under the tempered stable law $\p_q$ until the interval containing $\tau_b$ is identified, i.e., the smallest $T\in t_*\N$ with $S_{T+t_*}\ges b(T+t_*)$. When the interval $(T,T+t_*]$ containing $\tau_b$ under $\p_q$ is identified, in lines~8--11, \hyperref[alg:triple_temper_stable]{TSFP-Alg} samples $(\tau_b-T,S_{\tau_b-}-S_{T},S_{\tau_b}-S_{T})$ under $\p_q$ via rejection sampling: it proposes $(\tau_b-T,S_{\tau_b-}-S_{T},S_{\tau_b}-S_{T})$ and $S_{T+t_*}-S_{\tau_b}$ under $\p$, and accepts the sample (under $\p_q$) with probability $\exp(-q(S_{T+t_*}-S_{T}))$. The parameter $t_*$ affects the following three components of \hyperref[alg:triple_temper_stable]{TSFP-Alg}: the number of iterations of the loop in lines~2--7, the complexity of \hyperref[alg:triple_stable_conditional_on_time]{SFP-Alg} called in line~9 (cf.~Theorem~\ref{thm:expectation_of_time_complexity} below) and the acceptance probability in line~11. The choice of $t_*$ in line~1 of \hyperref[alg:triple_temper_stable]{TSFP-Alg} is made to control the overall complexity of the algorithm, see details in~\eqref{eq:cost-of-TSFP} of Section~\ref{sec:proofs} below.

Stable First-Passage algorithm~\hyperref[alg:triple_stable_conditional_on_time]{SFP-Alg} and its analysis constitute the main technical contribution of this paper. The complexity of \hyperref[alg:triple_temper_stable]{TSFP-Alg}
is given in the following theorem. Proofs of all the results in this paper are in Section~\ref{sec:proofs} below.

\begin{thm}
\label{thm:tempered_expect_time}
\hyperref[alg:triple_temper_stable]{TSFP-Alg} samples from the law of the triplet $(\tau_b, S_{\tau_b-}, S_{\tau_b})$ of a tempered stable subordinator $S$ under $\p_q$. Moreover, the running time of \hyperref[alg:triple_temper_stable]{TSFP-Alg} has exponential moments with mean bounded by
\[
\kappa_{\hyperref[alg:triple_temper_stable]{TSFP}}\me^{2qb(0)/(2^\alpha-1)}((1-\alpha)^{-3}+|\log\alpha|+\log N)/\alpha,
\]
where the constant $\kappa_{\hyperref[alg:triple_temper_stable]{TSFP}}$ does not depend on the parameters $\alpha\in(0,1)$, $\theta\in(0,\infty)$ or $q\in[0,\infty)$, and $N$ specifies the required number of precision bits of the output.
\end{thm}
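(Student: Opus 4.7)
The plan is to separate the argument into \textbf{correctness} and \textbf{running time}. For correctness, the first loop (lines~2--7) repeatedly samples the i.i.d.\ increments $S_{nt_*}-S_{(n-1)t_*}$ under $\p_q$ via \hyperref[alg:tempered_stable]{TS-Alg}; because $b$ is non-increasing and $S$ non-decreasing, the events $\{\tau_b>nt_*\}$ and $\{S_{nt_*}<b(nt_*)\}$ agree up to a $\p_q$-null set, so at exit the variables satisfy $T=(K-1)t_*$, $U=S_T$, $\tau_b\in(T,T+t_*]$ and $c(\cdot)=b(\cdot+T)-U$ is non-increasing with $c(t_*)\les b(0)$. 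The strong Markov property at time $T$ reduces the residual problem to sampling $(\tau_c,\tilde S_{\tau_c-},\tilde S_{\tau_c})$ under $\p_q$ conditional on $\{\tau_c\les t_*\}$, where $\tilde S_t\coloneqq S_{T+t}-U$. By Lemma~\ref{lem:tempered_law} in Appendix~\ref{app:temper}, the Radon--Nikodym derivative $\md\p_q/\md\p$ on $\mF_{t_*}$ is $\me^{-q\tilde S_{t_*}+q^\alpha\theta t_*}$, so the $\p_q$-conditional target law is the $\p$-conditional law size-biased by $\me^{-q\tilde S_{t_*}}$. The rejection step (lines~9--11) implements this: \hyperref[alg:triple_stable_conditional_on_time]{SFP-Alg} proposes $(\tau,u,v)$ from the conditional $\p$-law, \hyperref[alg:stable]{S-Alg} independently produces the bridge-end increment $w=\tilde S_{t_*}-v$ by strong Markov at $\tau$, and the test $E\ges q(v+w)$ with $E\sim\Exp(1)$ accepts with probability $\me^{-q(v+w)}=\me^{-q\tilde S_{t_*}}$.

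For the \textbf{mean running time}, write $\mT=\mT_1+\mT_2$ with $\mT_1=\sum_{k=1}^K\xi^{\rmTS}_k$ and $\mT_2=\sum_{m=1}^M(\xi^{\mathrm{SFP}}_m+\xi^{\rmS}_m)$, where the $\xi$'s are i.i.d.\ across iterations (per-call costs of the sub-algorithms) and $K$, $M$ are the loop-iteration counts. Since $c(t_*)\les b(0)$ throughout the first loop, the per-iteration success probability is at least $p_0\coloneqq\p_q(S_{t_*}\ges b(0))$; the Chernoff bound at $\lambda=q$, combined with~\eqref{eq:char_fun_stable} and the explicit formula for $t_*$ in line~1, yields
\[
\p_q(S_{t_*}<b(0))\les\me^{qb(0)}\E_q[\me^{-qS_{t_*}}]=\me^{qb(0)-t_*\theta q^\alpha(2^\alpha-1)}=\me^{-qb(0)-(1-2^{-\alpha})},
\]
so $p_0\ges1-\me^{-(1-2^{-\alpha})}\ges c\alpha$ (by the elementary inequality $1-\me^{-x}\ges x/2$ on $x\in(0,1)$ applied to $x=1-2^{-\alpha}\ges c'\alpha$), giving $\E_q[K]\les C/\alpha$. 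For the second loop, the Esscher identity $\E[\me^{-qS_{t_*}}\1_A]=\me^{-q^\alpha\theta t_*}\p_q(A)$ for $A\in\mF_{t_*}$ yields the acceptance probability $\E[\me^{-q\tilde S_{t_*}}\mid\tau_c\les t_*]=\me^{-q^\alpha\theta t_*}\p_q(\tau_c\les t_*)/\p(\tau_c\les t_*)$, whence
\[
\E[M]\les\me^{q^\alpha\theta t_*}/\p_q(\tau_c\les t_*)\les C'\me^{2qb(0)/(2^\alpha-1)}/\alpha,
\]
using $q^\alpha\theta t_*=(2qb(0)+1-2^{-\alpha})/(2^\alpha-1)$ and $\p_q(\tau_c\les t_*)\ges p_0\ges c\alpha$. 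Combining with the per-iteration expected costs---$O((1-\alpha)^{-3}+|\log\alpha|+\log N)$ for \hyperref[alg:triple_stable_conditional_on_time]{SFP-Alg} by Theorem~\ref{thm:expectation_of_time_complexity} and $O(\log N)$ for \hyperref[alg:tempered_stable]{TS-Alg} and \hyperref[alg:stable]{S-Alg} from the appendix---and invoking Wald's identity (valid since each iteration's cost is independent of the outcomes of the earlier iterations) gives the stated bound on $\E_q[\mT]$.

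Finally, the \textbf{exponential moments} of $\mT$ follow from the geometric tails of $K$ and $M$ (with success probabilities bounded away from $0$) via the generating-function identity $\E[\me^{\lambda\sum_{k=1}^K\xi_k}]=G_K(\E[\me^{\lambda\xi_1}])$, valid for $\lambda$ small enough that $\E[\me^{\lambda\xi_1}]$ lies below the radius of convergence of $G_K$, while the sub-algorithm costs $\xi$ inherit their own exponential moments from the theorems cited. The main obstacle I anticipate is the choice of $t_*$: it must be large enough that $\p_q(S_{t_*}\ges b(0))$ is not too small (keeping both $\E_q[K]$ and $\E[M]$ controllable) yet small enough that the Esscher factor $\me^{q^\alpha\theta t_*}$ in $\E[M]$ does not explode. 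The explicit formula in line~1 is the minimiser (up to constants) of the product of these two competing factors, which explains why it appears in the statement and produces the $\me^{2qb(0)/(2^\alpha-1)}$ factor in the final bound.
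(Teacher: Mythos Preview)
Your overall strategy matches the paper's: split into correctness (strong Markov at $T$ plus the Esscher identity~\eqref{eq:temper_to_stable_conditional}) and running time (bound the expected number $K$ of iterations of the first loop, the expected number $M$ of iterations of the second, and the per-iteration costs). Your Chernoff argument under $\p_q$ to bound $\E_q[K]$ is a clean alternative to the paper's route, which instead bounds $\E_q[\tau_b]$ via Lemma~\ref{lem:exp_moment} and uses $K=1+\lfloor\tau_b/t_*\rfloor$; both yield $\E_q[K]=O(1/\alpha)$.

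There is, however, a genuine gap in your running-time bound for the second loop. You invoke Theorem~\ref{thm:expectation_of_time_complexity} to claim that the per-call cost of \hyperref[alg:triple_stable_conditional_on_time]{SFP-Alg} is $O((1-\alpha)^{-3}+|\log\alpha|+\log N)$, but that theorem actually gives
\[
\E[\xi^{\mathrm{SFP}}]\les\kappa_{\hyperref[alg:triple_stable_conditional_on_time]{SFP}}\,\frac{(1-\alpha)^{-3}+|\log\alpha|+\log N}{\p[S_1\ges B(t_*)]}=\kappa_{\hyperref[alg:triple_stable_conditional_on_time]{SFP}}\,\frac{(1-\alpha)^{-3}+|\log\alpha|+\log N}{\p[\tau_c\les t_*]},
\]
and there is no uniform lower bound on $\p[\tau_c\les t_*]$ under $\p$ in terms of $\alpha$ alone (it depends on $q$, $b(0)$, $\theta$ through $t_*$ and $c$). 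The repair is simple and is exactly what the paper does: do \emph{not} upper-bound $\p(\tau_c\les t_*)$ by $1$ in your exact expression $\E[M]=\me^{q^\alpha\theta t_*}\p(\tau_c\les t_*)/\p_q(\tau_c\les t_*)$; instead keep it and let it cancel the same factor in $\E[\xi^{\mathrm{SFP}}]$. Wald then gives
\[
\E\Big[\sum_{m=1}^M\xi^{\mathrm{SFP}}_m\Big]
=\E[M]\,\E[\xi^{\mathrm{SFP}}]
\les\frac{\me^{q^\alpha\theta t_*}}{\p_q(\tau_c\les t_*)}\cdot\kappa_{\hyperref[alg:triple_stable_conditional_on_time]{SFP}}\big((1-\alpha)^{-3}+|\log\alpha|+\log N\big),
\]
which is equivalent to the paper's key expression $\kappa_{\hyperref[alg:triple_stable_conditional_on_time]{SFP}}((1-\alpha)^{-3}+|\log\alpha|+\log N)/\E[\me^{-qS_{t_*}}\1\{S_{t_*}\ges c(t_*)\}]$ (write out $\p_q(\tau_c\les t_*)=\me^{q^\alpha\theta t_*}\E[\me^{-qS_{t_*}}\1\{S_{t_*}\ges c(t_*)\}]$). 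After this cancellation your bound $\p_q(\tau_c\les t_*)\ges p_0\ges c\alpha$ and $\me^{q^\alpha\theta t_*}\les\me\cdot\me^{2qb(0)/(2^\alpha-1)}$ finish the job exactly as you wrote.

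Two minor remarks: \hyperref[alg:stable]{S-Alg} and \hyperref[alg:tempered_stable]{TS-Alg} have $O(1)$ expected cost per call (not $O(\log N)$), which only strengthens your bound; and your exponential-moment argument is fine (and more explicit than the paper, which states but does not detail it), once you observe that the per-iteration cost $\xi^{\mathrm{TS}}_k$ of \hyperref[alg:tempered_stable]{TS-Alg} is independent of the produced sample $s_k$, so $K$ is a stopping time for an i.i.d.\ sequence and the compound-geometric generating-function identity applies.
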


The upper bound on the running time of \hyperref[alg:triple_temper_stable]{TSFP-Alg} has a term of order $\Oh(\exp(2qb(0)/(2^\alpha-1)))$, which may be quite large if either $b(0)$, $q$ or $1/\alpha$ are large. However, the exponential dependence on these parameters of the complexity of \hyperref[alg:triple_temper_stable]{TSFP-Alg} can be easily turned into a linear dependence (see Corollary~\ref{cor:improved_tempered_expected_time}) by picking a constant $R>0$ and applying  \hyperref[alg:triple_temper_stable]{TSFP-Alg} successively with the boundary function $t\mapsto \min\{b(t),R\}$ until the initial boundary $b$ is crossed. Our main algorithm \hyperref[alg:improved_triple_temper_stable]{TSFFP-Alg}, which we now describe, does precisely this. Indeed, variables $T$ and $U$, introduced in line~1 of \hyperref[alg:improved_triple_temper_stable]{TSFFP-Alg} keep track of time and the value $S_T$ of $S$ at this time. Line~3 of \hyperref[alg:improved_triple_temper_stable]{TSFFP-Alg}, samples $(\tau_c-T,S_{\tau_c-}-S_{T},S_{\tau_c}-S_T)$ under $\p_q$ via \hyperref[alg:triple_temper_stable]{TSFP-Alg}, where $c:t\mapsto \min\{b(t),S_T+R\}$. The algorithm updates $T$ and $U=S_T$ in line~4. If $S_{\tau_c}\ges b(\tau_c)$ (line~5), then $\tau_b=\tau_c$, the algorithm stops and returns the triplet $(\tau_b,S_{\tau_b-},S_{\tau_b})$ in line~6; otherwise (i.e., if $S_{\tau_c}<b(\tau_c)$), it repeats from line~2. The choice of $R$ in \hyperref[alg:improved_triple_temper_stable]{TSFFP-Alg} is fixed in line~1 and is chosen to minimise an upper bound on the expected running time of the algorithm, see details in Section~\ref{sec:proofs} below.

\begin{algorithm*}
\caption{(TSFFP-Alg) Tempered Stable Fast First-Passage Algorithm: samples $(\tau_b,S_{\tau_b-},S_{\tau_b})$ under $\p_q$}
\label{alg:improved_triple_temper_stable}
\begin{algorithmic}[1]
\Require{Parameters $\alpha\in(0,1)$, $\theta>0$, $q\ges 0$ and function $b:[0,\infty)\to[0,\infty)$}
\State{Set $R\gets(2^\alpha-1)/(2q)$, $T\gets0$, $U\gets0$ and $c(\cdot)\gets b(\cdot)$}
\Repeat
\State{Sample $(\tau,u,v)$ via \hyperref[alg:triple_temper_stable]{TSFP-Alg} with function $\min\{c,R\}$}
\State{Update $T\leftarrow T+\tau$, $U\leftarrow U+v$, $c(\cdot)\leftarrow c(\cdot+\tau)-v$}
\Until{$U\ges b(T)$}
\State{\Return ${(T, U-(v-u), U)}$}
\Comment{The jump that takes $S$ over $b$ has size $v-u$}
\end{algorithmic}
\end{algorithm*}

\begin{cor}
\label{cor:improved_tempered_expected_time}
\hyperref[alg:improved_triple_temper_stable]{TSFFP-Alg} samples from the law of $(\tau_b, S_{\tau_b-}, S_{\tau_b})$ under $\p_q$. Moreover, the running time of \hyperref[alg:improved_triple_temper_stable]{TSFFP-Alg} has exponential moments with mean bounded by
\[ 
\kappa_{\hyperref[alg:improved_triple_temper_stable]{TSFFP}}
(1+qb(0)/\alpha)((1-\alpha)^{-3}+|\log\alpha|+\log N)/\alpha,
\]
where $\kappa_{\hyperref[alg:improved_triple_temper_stable]{TSFFP}}$ does not depend on  $\alpha\in(0,1)$, $\theta\in(0,\infty)$ or on $q\in[0,\infty)$, and $N$ is as in Theorem~\ref{thm:tempered_expect_time}.
\end{cor}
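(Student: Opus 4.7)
The plan is to combine Theorem~\ref{thm:tempered_expect_time}, which controls the expected cost of each call to \hyperref[alg:triple_temper_stable]{TSFP-Alg}, with a deterministic upper bound on the number of iterations of the outer \texttt{repeat}-\texttt{until} loop of \hyperref[alg:improved_triple_temper_stable]{TSFFP-Alg}. Correctness will follow from the strong Markov property at each update time $T$: conditionally on $(S_t)_{t\les T}$, the tail $(S_{T+t}-S_T)_{t\ges 0}$ is an independent copy of a tempered stable subordinator under $\p_q$, and \hyperref[alg:triple_temper_stable]{TSFP-Alg} returns its first-passage triplet over the remaining curve $\min\{c,R\}$. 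Concatenating these triplets reproduces $(\tau_b,S_{\tau_b-},S_{\tau_b})$ exactly when the loop-exit condition $U\ges b(T)$ detects a genuine crossing of $b$.

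The key quantitative observation is that every non-terminating iteration must cross the constant cap $R$ rather than the curve $b$ itself. Indeed, the loop continues precisely when the returned overshoot $v$ satisfies $v<c(\tau)$; since \hyperref[alg:triple_temper_stable]{TSFP-Alg} produced the crossing of $\min\{c,R\}$, one has $v\ges\min\{c(\tau),R\}$, which forces $c(\tau)>R$ and hence $v\ges R$. Consequently the accumulated value $U$ grows by at least $R$ per non-terminating iteration, while the stopping condition forces $U<b(T)\les b(0)$ throughout (by monotonicity of $b$). The total iteration count is therefore deterministically bounded by $\lfloor b(0)/R\rfloor+1$. With $R=(2^\alpha-1)/(2q)$ from line~1 of \hyperref[alg:improved_triple_temper_stable]{TSFFP-Alg} and the elementary inequality $2^\alpha-1\ges\alpha\log 2$ on $\alpha\in(0,1)$, this bound is of order $1+qb(0)/\alpha$.

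The cost of each iteration is controlled by Theorem~\ref{thm:tempered_expect_time} applied to a boundary of initial value at most $R$, yielding expected cost at most $\kappa_{\hyperref[alg:triple_temper_stable]{TSFP}}\me^{2qR/(2^\alpha-1)}((1-\alpha)^{-3}+|\log\alpha|+\log N)/\alpha$. The crucial step is that the choice $R=(2^\alpha-1)/(2q)$ is tuned precisely so that $\me^{2qR/(2^\alpha-1)}=\me$, a universal constant; this is where the exponential dependence on $qb(0)$ inherent in Theorem~\ref{thm:tempered_expect_time} is traded for linear dependence in the corollary. Multiplying the per-iteration cost by the iteration-count bound yields the stated bound on the mean. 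Finite exponential moments then follow because the iteration count is deterministic and each $T_k$ enjoys exponential moments uniformly in the (random) remaining boundary (since the bound in Theorem~\ref{thm:tempered_expect_time} depends on the boundary only through $b(0)$, which here is dominated by $R$), so iterative conditioning on the past yields $\E[\me^{s\sum_k T_k}]\les M^K$ for suitably small $s>0$ and some finite $M$. The main delicate point is exactly this balancing of $R$ against the two competing costs---larger $R$ inflates the per-call cost through the exponential factor, while smaller $R$ inflates the iteration count---and the choice in line~1 is what equalises them to produce the claimed bound.
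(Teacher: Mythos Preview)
Your proposal is correct and follows essentially the same approach as the paper's proof: bound the number of iterations deterministically by $1+\lfloor b(0)/R\rfloor$, apply Theorem~\ref{thm:tempered_expect_time} with initial value $R$ to each iteration, and observe that the choice $R=(2^\alpha-1)/(2q)$ makes the exponential factor equal to~$\me$. You actually supply more detail than the paper does on why each non-terminating iteration increases $U$ by at least $R$ and on the exponential-moment claim, both of which the paper leaves implicit.
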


\subsection{Sampling the first passage of a stable subordinator}
\label{subsec:simulation_stable_vector}

The algorithm \hyperref[alg:triple_stable_conditional_on_time]{SFP-Alg} for sampling $(\tau_b,S_{\tau_b-},S_{\tau_b})|\{\tau_b\les t_*\}$ under the stable law $\p$ is based on the formulae in Proposition~\ref{prop:joint_law} below, describing the law of $(\tau_b,S_{\tau_b-},S_{\tau_b})$ under $\p$. Before presenting \hyperref[alg:triple_stable_conditional_on_time]{SFP-Alg}, we give a brief account of its contents. First,  \hyperref[alg:triple_stable_conditional_on_time]{SFP-Alg} samples the crossing time $\tau_b$ of the stable subordinator, conditional on $\tau_b\les t_*$, in line~1 and stores its value in the variable $T$. The algorithm then determines in lines~2~\&~3 whether the crossing occurred via a jump $\Delta_S(\tau_b)\coloneqq S_{\tau_b}-S_{\tau_b-}$ of positive size $\Delta_S(\tau_b)>0$ or continuously (i.e., by creeping, with $\Delta_S(\tau_b)=0$). In the latter case, the algorithm simply returns the crossing triple (line~4) and, in the former, the algorithm samples the undershoot and the overshoot and returns the triplet (lines~6--8).

\begin{algorithm}
\caption{(SFP-Alg) Stable First-Passage  Algorithm: samples $(\tau_b,S_{\tau_b-},S_{\tau_b})$ conditional on $\tau_b\les t_*$ under $\p$}
\label{alg:triple_stable_conditional_on_time}
\begin{algorithmic}[1]
\Require{Parameters $\alpha\in(0,1)$, $\theta>0$, $t_*\in(0,\infty]$, functions $b:[0,\infty)\to[0,\infty)$, $B:t\mapsto t^{-1/\alpha}b(t)$ and its inverse $B^{-1}:(0,\infty)\to(0,T_b)$ specified in Proposition~\ref{prop:joint_law} (with convention $B(\infty)=0$)}
\State{Generate $V_1\sim\mathcal{L}(S_1)$ via \hyperref[alg:stable]{S-Alg} until $V_1\ges B(t_*)$ and set $T\gets B^{-1}(V_1)$}\label{line:tau<t_*}
\State{Generate $U_1\sim\Unif(0,1)$}
\If{$U_1\les -b'(T)/(-b'(T)+\alpha^{-1}T^{-1}b(T))$}
\State{\Return $(T, b(T), b(T))$}
\Else
\State{Generate $U_2\sim\Unif(0,1)$}
\State{Generate $V_2\sim\rmS\Unif_{\alpha}(T,b(T))$ with law of $S_{\tau_b-}|\{\tau_b=T,\,b(\tau_b)=b(T),\,\Delta_S(\tau_b)>0\}$ via \hyperref[alg:undershoot_stable]{SU-Alg}}
\label{line:undershoot_t<t_*}
\State{\Return $(T, V_2, V_2 + (b(T)-V_2)U_2^{-1/\alpha})$}
\EndIf
\end{algorithmic}
\end{algorithm}

\begin{thm}
\label{thm:expectation_of_time_complexity}
\hyperref[alg:triple_stable_conditional_on_time]{SFP-Alg} samples from the law of $(\tau_b,S_{\tau_b-},S_{\tau_b})$ under $\p$ conditional on $\tau_b\les t_*$ (for $0<t_*\les \infty$). 
The running time of \hyperref[alg:triple_stable_conditional_on_time]{SFP-Alg} has finite exponential moments and mean bounded by
\begin{gather*}
\kappa_{\hyperref[alg:triple_stable_conditional_on_time]{SFP}}((1-\alpha)^{-3}+|\log\alpha|+\log N)/\p[S_1\ges B(t_*)],
\end{gather*}
where the constant $\kappa_{\hyperref[alg:triple_stable_conditional_on_time]{SFP}}$ does not depend on  $\alpha\in(0,1)$, $\theta\in(0,\infty)$ or  $t_*\in(0,\infty]$ and
$N$ specifies the number of precision bits of the output.
\end{thm}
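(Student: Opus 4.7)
\textbf{Proof proposal for Theorem~\ref{thm:expectation_of_time_complexity}.}
The plan is to establish correctness by decomposing the joint law of $(\tau_b,S_{\tau_b-},S_{\tau_b})$ into a chain of conditional laws in the order the algorithm samples them, and then to establish the complexity bound by analysing the rejection loop in line~\ref{line:tau<t_*} together with the (bounded-in-expectation) cost of the post-loop conditional sampling.

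For correctness, I would first use the stable scaling $S_t\eqd t^{1/\alpha}S_1$ together with the monotonicity of $S$ and the non-increasing nature of $b$ to obtain $\{\tau_b\les t\}=\{S_t\ges b(t)\}=\{S_1\ges B(t)\}$, so that $\tau_b\eqd B^{-1}(S_1)$, with $B$ strictly decreasing on $(0,T_b)$ by Proposition~\ref{prop:joint_law}. This shows that the rejection step in line~\ref{line:tau<t_*}, which resamples $V_1\sim\mathcal{L}(S_1)$ until $V_1\ges B(t_*)$ and then sets $T=B^{-1}(V_1)$, produces $T$ with the law of $\tau_b$ conditional on $\tau_b\les t_*$. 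Next, Proposition~\ref{prop:joint_law}(b) supplies the probability of creeping given $\tau_b=T$ as the ratio in line~3, justifying the split between the creeping branch (line~4) and the jumping branch (lines~6--8). In the jumping branch, \hyperref[alg:undershoot_stable]{SU-Alg} samples $V_2$ from the conditional law of $S_{\tau_b-}$ given $\{\tau_b=T,\,\Delta_S(\tau_b)>0\}$ (its correctness being an input from its own statement), and the overshoot conditional on $(\tau_b,S_{\tau_b-})=(T,V_2)$ is, by the standard Pareto-type formula for stable first-passage overshoots, distributed as $(b(T)-V_2)U_2^{-1/\alpha}$ for $U_2\sim\Unif(0,1)$, which justifies line~8.

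For the complexity, let $p\coloneqq\p[S_1\ges B(t_*)]$. The number $N_1$ of iterations of the loop in line~\ref{line:tau<t_*} is geometric with success probability $p$, so $\E[N_1]=1/p$ and $N_1$ has exponential moments of every order strictly less than $\log(1/(1-p))$. Each iteration invokes \hyperref[alg:stable]{S-Alg} (constant cost, possibly with a mild $|\log\alpha|$ term) together with one evaluation of $B$, and upon success, one numerical inversion to compute $B^{-1}(V_1)$ via \hyperref[alg:inversion_newton_raphson]{NR-Alg}, whose cost under the guaranteed quadratic convergence is $\Oh(|\log\alpha|+\log N)$. The post-loop cost is a single call to \hyperref[alg:undershoot_stable]{SU-Alg}, which by its complexity statement contributes an expected cost of order $(1-\alpha)^{-3}+|\log\alpha|+\log N$ with exponential moments. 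Summing, the expected running time is bounded by
\[
C_1\cdot\frac{1}{p}+C_2\cdot\big((1-\alpha)^{-3}+|\log\alpha|+\log N\big)
\les \frac{\kappa_{\hyperref[alg:triple_stable_conditional_on_time]{SFP}}\big((1-\alpha)^{-3}+|\log\alpha|+\log N\big)}{p},
\]
where the last step uses $1/p\ges 1$ to absorb the additive $C_2$-term into a multiplicative one. For exponential moments of the total running time, I would write the total cost as a sum of $N_1$ i.i.d.\ per-iteration costs plus one independent SU-Alg cost, apply Wald-type / generating-function arguments (the moment generating function of a geometric compound sum is finite in a neighbourhood of $0$ whenever the per-iteration cost has one), and combine with the finite exponential moments of SU-Alg.

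The main obstacle I anticipate is purely bookkeeping: carefully verifying that the per-iteration cost in line~\ref{line:tau<t_*} (S-Alg plus, upon success, an NR-Alg inversion of $B$) is dominated by the SU-Alg bound so that the final upper bound has the stated product form, and tracking that the assumptions needed for quadratic convergence of Newton--Raphson on $B$ genuinely hold on the range $[B(t_*),\infty)$ sampled from the conditional law; the stable scaling and monotonicity of $B$ make this believable but require explicit bounds on $B'$ and $B''$ in terms of $\alpha$ and~$b$.
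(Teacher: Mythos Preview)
Your correctness argument is fine and matches the paper's approach via Proposition~\ref{prop:joint_law}.

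There is, however, a real gap in your complexity argument. You write that ``\hyperref[alg:undershoot_stable]{SU-Alg} \ldots\ by its complexity statement contributes an expected cost of order $(1-\alpha)^{-3}+|\log\alpha|+\log N$''. But this is not what Proposition~\ref{prop:undershoot_algorithm} says: the bound there depends explicitly on the parameter $s=(\theta t)^{-1/\alpha}w$ through terms like $\alpha\log^+(s)$, $\alpha(1-\alpha)^{-1}\log^+(s^{-1})$, and $\log^+\big(1/(s^{\alpha/(1-\alpha)}-\alpha^{(2\alpha-1)/(1-\alpha)}(1-\alpha))\big)$. In line~\ref{line:undershoot_t<t_*} of \hyperref[alg:triple_stable_conditional_on_time]{SFP-Alg}, this parameter equals $s=(\theta\tau_b)^{-1/\alpha}b(\tau_b)=\theta^{-1/\alpha}V_1$, which by Remark~\ref{rem:s-is-stable} is stable-distributed, hence heavy-tailed. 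The main technical content of the proof of Theorem~\ref{thm:expectation_of_time_complexity} is precisely to integrate the $s$-dependent SU-Alg bound against this heavy-tailed law and show the result is $\Oh((1-\alpha)^{-3}+|\log\alpha|+\log N)$.

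The paper carries this out using the Mellin transform of the stable law, $\E[S_t^\eta]=(t\theta)^{\eta/\alpha}\Gamma(1-\eta/\alpha)/\Gamma(1-\eta)$ for $\eta<\alpha$, to control $\E[\alpha\log^+(s^{\pm 1})]$ uniformly in $\alpha$, and a separate lemma (Lemma~\ref{lem:bound_of_expectation_complicated_thing}, relying on the supremum bound on the stable density in Lemma~\ref{lem:varphi_norm}) to show that the expectation of the awkward term $\log^+\big(1/(s^r-\alpha^{r-1}(1-\alpha))\big)$ is $\Oh((1-\alpha)^{-1})$ as $\alpha\uparrow1$ and vanishes as $\alpha\downarrow0$. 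The same averaging is needed for the exponential-moment claim: the paper shows directly that $\E[s^{\pm\eta}]<\infty$ and $\E[(s^r-A^r)^{-\eta}]<\infty$ for suitable $\eta>0$. Your ``bookkeeping'' obstacle about Newton--Raphson on $B$ is a side issue; the averaging over the random $s$ is the step you are missing.
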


The proof of Theorem~\ref{thm:expectation_of_time_complexity} is in  Section~\ref{subsubsec:proof_of_main_thm_stable} below. If $t_*=\infty$, then $B(\infty)=0$, $\p[S_1>0]=1$ and Theorem~\ref{thm:expectation_of_time_complexity} implies that the expected running time of \hyperref[alg:triple_stable_conditional_on_time]{SFP-Alg} may increase as fast as $(1-\alpha)^{-3}$ as $\alpha\uparrow1$. A deterioration in the limit as $\alpha\uparrow1$ is expected since, in this limit, the stable subordinator $S$ converges to a pure-drift subordinator $\theta t$, for which the distribution of the triplet degenerates to a delta distribution. Similarly, the expected running time of \hyperref[alg:triple_stable_conditional_on_time]{SFP-Alg} may increase as fast as $|\log\alpha|$ as $\alpha\downarrow0$. This is expected as well since the stable subordinator $S$ converges to the trivial process $0$ as $\alpha\downarrow 0$. Numerical evidence for these increases in complexity and the theoretical bounds in Theorem~\ref{thm:expectation_of_time_complexity} can be found in Section~\ref{sec:numerics} below. 
We stress that existing algorithms for the simulation of the triplet $(\tau_b,S_{\tau_b-},S_{\tau_b})$ have infinite expected running time for any $\alpha\in(0,1)$, see Subsection~\ref{subsec:How_not_to_sample} below for more details. 

As we said before, \hyperref[alg:triple_stable_conditional_on_time]{SFP-Alg} has three main steps. First, the algorithm generates the first-passage time $T$ in line~1. This requires evaluating, up to $N$ bits of precision, the inverse $B^{-1}$ of the function $B:t\mapsto t^{-1/\alpha}b(t)$, which is defined on $(0,T_b)$ where $T_b\coloneqq \inf\{t>0:b(t)=0\}\in(0,\infty]$. This step may require a numerical inversion of $B$ (or, equivalently, numerically finding the root of $t\mapsto B(t)-V_1$). Such algorithms are standard in the literature (see Appendix~\ref{sec:NewtonRaphson}), making this a simple step. Second, the algorithm determines if the first passage occurred continuously (i.e., by creeping) in line~3. If the process crept, the triple equals $(T,b(T),b(T))$, but, if it did not, then we must sample the undershoot and overshoot of the process conditional on the crossing time. The third main step is precisely the simulation of the undershoot and overshoot in lines~7 and~8. By Proposition~\ref{prop:joint_law}, the conditional law of the overshoot is a power law and hence easy to sample from, while the law of the undershoot, although expressible in terms of the stable law and L\'evy measure, is complicated and hard to sample from. For this reason, the focus of the remainder of this section is to construct \hyperref[alg:undershoot_stable]{SU-Alg}, which samples from the law of the undershoot. 

\subsection{Simulation of the stable undershoot  via domination by a mixture of densities}
\label{subsec:undershoot}

Our main objective now is to present a simulation algorithm from the undershoot law $\rmS\Unif_\alpha(t,w)$, $t,w>0$, that is, the law of $S_{\tau_b-}$ under $\p$, conditional on $\{\tau_b=t, b(\tau_b)=w, \Delta_S(\tau_b)>0\}$. Our main tool is the general rejection-sampling principle. Unlike before, the tools and algorithms used here have little probabilistic interpretation and are mostly based on analytical properties and inequalities. The derivation of these inequalities, which validate \hyperref[alg:undershoot_stable]{SU-Alg}, will be developed and explained in Section~\ref{sec:proofs} below. For now, we only present the algorithms with a brief description of how it works. The algorithm has several nontrivial components that we will slowly introduce and explain. The reason for this is that the task of simulating from the undershoot law $\rmS\Unif_\alpha(t,w)$ comes with multiple pitfalls, with several tempting approaches having severe problems, see Subsection~\ref{subsec:How_not_to_sample} for an account of these pitfalls.

First, the density $f_{t,w}$ of $\rmS\Unif_\alpha(t,w)$ involves the density of the stable law, which can be written as a non-elementary integral of elementary functions (see Proposition~\ref{prop:joint_law}). Thus, by extending the state space and applying an elementary transformation, the problem is converted into producing samples from a density $\psi$ proportional to $(x,y)\mapsto (s-x^{-1/r})^{-\alpha}\sigma_\alpha(y)\me^{-\sigma_\alpha(y)x}$ on $(s^{-r},\infty)\times(0,1)$, where $r\coloneqq \alpha/(1-\alpha)$, $s\coloneqq (\theta t)^{-1/\alpha}w$ and $\sigma_\alpha$ is the convex function defined in terms of trigonometric functions in~\eqref{eq:zolotarev_density}. The density $\psi_s$ is difficult to sample from directly, so we first bound it with (and propose from) the mixture $\wt\psi_s=p\wt\psi^{(1)}_s+(1-p)\wt\psi^{(2)}_s$ of two simpler densities $\wt\psi^{(1)}_s$ and $\wt\psi^{(2)}_s$ on $(s^{-r},\infty)\times(0,1)$, which are respectively proportional to $(x,y)\mapsto\sigma_\alpha(y)\me^{-\sigma_\alpha(y)x}$ and $(x,y)\mapsto\sigma_\alpha(y)\me^{-\sigma_\alpha(y)x}(x-s^{-r})^{-\alpha}$ (see details in Subsection~\ref{subsec:Undershoot_simulation} below).

\begin{algorithm}
\caption{(SU-Alg) Stable  Undershoot Algorithm: samples $S_{t-}|\{\tau_b=t,\,b(\tau_b)=w,\,\Delta_S(\tau_b)>0\}$ under~$\p$}
\label{alg:undershoot_stable}
\begin{algorithmic}[1]
\Require{Parameters $\alpha\in(0,1)$ and $\theta,t,w>0$}
\State{Set $s\gets (\theta t)^{-1/\alpha}w$ and $r\gets\alpha/(1-\alpha)$, compute $p'$ in~\eqref{eq:psi_p'} and set $p\gets p'/(p'+1)$ }\label{line_1:alg:undershoot_stable}
\Repeat
\State{Generate $V_1,V_2\sim\Unif(0,1)$}
\If{$V_1<p$}
    \State{Generate $(\zeta_s,Y_s)$ with density $\wt\psi_s^{(1)}$ via \hyperref[alg:psi_1]{$\psi^{(1)}$-Alg}}
\Else
    \State{Generate $(\zeta_s,Y_s)$ with density $\wt\psi_s^{(2)}$ via \hyperref[alg:psi_2]{$\psi^{(2)}$-Alg}}
\EndIf
\Until{$V_2\les(s-\zeta_s^{-1/r})^{-\alpha}/((1-2^{\alpha-1})^{-\alpha}s^{-\alpha}+(2r)^\alpha s^{-r}(\zeta_s-s^{-r})^{-\alpha})$}\label{step:alg_undershoot_accept}
\State{\Return $(\theta t)^{1/\alpha}\zeta_s^{-1/r}$}
\end{algorithmic}
\end{algorithm}

\hyperref[alg:undershoot_stable]{SU-Alg} proposes from the mixture $\wt\psi_s$ in lines~2--8 and accepts the sample with probability proportional to $\psi_s/\wt\psi_s$. Lines~3~\&~4 of \hyperref[alg:undershoot_stable]{SU-Alg} decide whether the sample from the mixture $\wt\psi_s$ will come from $\wt\psi_s^{(1)}$ (sampled in line~5) or $\wt\psi_s^{(2)}$ (sampled in line~7). The simulation algorithms corresponding to the simpler densities $\wt\psi_s^{(1)}$ and $\wt\psi_s^{(2)}$ will be discussed later.

\begin{remark}
\label{rem:prob_p'}
The proportion $p'$ (and hence $p$) can be efficiently evaluated to arbitrary precision using Gauss--Legendre quadrature. In fact, the numerator in the formula in~\eqref{eq:psi_p'} equals the distribution function of a stable law while the integrand in the denominator is a convex geometric combination of the integrands of the density and distribution functions of a stable law. The computation time for $p'$ is thus assumed to be constant for any $s=(\theta t)^{-1/\alpha}w$, and to only depend on the number of precision digits $N$, see fast algorithms in e.g. \cite{nolan1997numerical,ament2018accurate}. The error of the quadrature is known to decay exponentially fast in the number of nodes but may also be sensitive to large values of the high order derivatives of the integrand~\cite{4272861}. In our case, the derivatives of the integrand also grow quickly in the order of differentiation, implying a locally large relative error in the numerical integral. Since, however, these derivatives are large precisely when the function $f$ is nearly vanishing, the overall relative error appears to be small, resulting in a complexity that is only a function of the precision $N$, as has been confirmed many times in the literature and by our own simulation times, see Section~\ref{sec:numerics} below.
\end{remark}

\begin{remark}
\label{rem:s-is-stable}
Line~7 of \hyperref[alg:triple_stable_conditional_on_time]{SFP-Alg} calls \hyperref[alg:undershoot_stable]{SU-Alg} on random input, making the parameter $s=(\theta t)^{-1/\alpha}w$ in \hyperref[alg:undershoot_stable]{SU-Alg} equal to $\theta^{-1/\alpha}V_1$ (where $V_1\sim\mathcal{L}(S_1)$ is as in \hyperref[alg:triple_stable_conditional_on_time]{SFP-Alg}), which follows the stable law of $S_{1/\theta}$ conditioned on $S_{1/\theta}\ges B(t_*)$. In particular, the density of $s$ is proportional to the restriction to $[B(t_*),\infty)$ of $\varphi_\alpha$, defined in~\eqref{eq:zolotarev_representation}, making $s$ heavy tailed and its law dependent only on $\alpha$ and $B(t_*)$.
\end{remark}

For any $a,x>0$, define $\log_a^+(x):=\max\{0, \log(x)/\log(a)\}$.
\begin{prop}
\label{prop:undershoot_algorithm}
\hyperref[alg:undershoot_stable]{SU-Alg} samples from the law $\rmS\Unif_\alpha(t,w)$ of the undershoot $S_{t-}$ under $\p$, conditional on the event $\{\tau_b=t,b(\tau_b)=w,\Delta_S(\tau_b)>0\}$. The random running time of \hyperref[alg:undershoot_stable]{SU-Alg} has exponential moments. Moreover, the expected running time of \hyperref[alg:undershoot_stable]{SU-Alg} is bounded above by 
\begin{align*}   \kappa_{\hyperref[alg:undershoot_stable]{SU}}\Bigl[(1-\alpha)^{-1}&\Bigl(1+\alpha(1-\alpha)^{-1}\log_2^+(s^{-1})+\log_2^+\big(1/(s^{\alpha/(1-\alpha)}-\alpha^{(1-2\alpha)/(1-\alpha)}(1-\alpha))\big) \\
&+\alpha\log_2^+(s)+|\log\alpha|+(1-\alpha)^{-2}\Bigr)+\log N\Bigr],
\end{align*}
where the constant $\kappa_{\hyperref[alg:undershoot_stable]{SU}}>0$ does not depend on $s=(\theta t)^{-1/\alpha}w\in(0,\infty)$ (see line~\ref{line_1:alg:undershoot_stable} of \hyperref[alg:undershoot_stable]{SU-Alg}), $t>0$, $\theta\in(0,\infty)$ or $\alpha\in(0,1)$.
\end{prop}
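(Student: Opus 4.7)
The plan is to establish Proposition~\ref{prop:undershoot_algorithm} in two stages: correctness of \hyperref[alg:undershoot_stable]{SU-Alg} and the quantitative running-time bound (which also yields the claimed exponential moments). The starting point is the identification recalled in Subsection~\ref{subsec:undershoot} of $\rmS\Unif_\alpha(t,w)$ as the pushforward under $\zeta\mapsto(\theta t)^{1/\alpha}\zeta^{-1/r}$ of the $\zeta$-marginal of the density $\psi_s$ on $(s^{-r},\infty)\times(0,1)$ proportional to $(x,y)\mapsto(s-x^{-1/r})^{-\alpha}\sigma_\alpha(y)\me^{-\sigma_\alpha(y)x}$. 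Thus, once lines~2--8 are shown to return a sample from $\psi_s$, the final return statement produces a sample from $\rmS\Unif_\alpha(t,w)$.

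For the correctness of the inner loop I would first verify that $\wt\psi_s=p\wt\psi^{(1)}_s+(1-p)\wt\psi^{(2)}_s$ (with $p=p'/(p'+1)$ and $p'$ as in~\eqref{eq:psi_p'}) is a genuine mixture density, and then establish the pointwise domination
\[
(s-\zeta^{-1/r})^{-\alpha}\les(1-2^{\alpha-1})^{-\alpha}s^{-\alpha}+(2r)^\alpha s^{-r}(\zeta-s^{-r})^{-\alpha}\quad\text{for all }\zeta>s^{-r},
\]
by splitting according to whether $\zeta-s^{-r}$ is smaller or larger than a threshold calibrated so that the two summands on the right dominate, respectively, on the two regimes. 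With the standard rejection identity, this shows that the RHS in line~\ref{step:alg_undershoot_accept} lies in $[0,1]$ and coincides (up to an overall constant) with $\psi_s/\wt\psi_s$, yielding correctness of \hyperref[alg:undershoot_stable]{SU-Alg}.

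To bound the expected running time I would use Wald's identity to separate the expected number of iterations $1/P_{acc}$ from the expected per-iteration cost. The probability $P_{acc}$ equals the $\wt\psi_s$-average of the RHS of line~\ref{step:alg_undershoot_accept}; decomposing along the mixture and using the stable integral representations of the normalisers (cf.~Remark~\ref{rem:prob_p'}) I would derive a lower bound $P_{acc}\ges c(1-\alpha)$ uniform in $s$, producing the outer factor $(1-\alpha)^{-1}$ in the stated bound. The expected cost of one iteration is dominated by the maximum of the (separately established) complexities of \hyperref[alg:psi_1]{$\psi^{(1)}$-Alg} and \hyperref[alg:psi_2]{$\psi^{(2)}$-Alg}: the former, which reduces to \hyperref[alg:Devroye]{LC-Alg}, contributes the summand $|\log\alpha|+(1-\alpha)^{-2}+\log N$, while the $s$-dependent partitioning in \hyperref[alg:psi_2]{$\psi^{(2)}$-Alg} (implemented via binary search and Newton--Raphson on elementary functions with $s$-dependent critical values) accounts for the three terms $\alpha(1-\alpha)^{-1}\log_2^+(s^{-1})$, $\alpha\log_2^+(s)$, and $\log_2^+\big(1/(s^{\alpha/(1-\alpha)}-\alpha^{(1-2\alpha)/(1-\alpha)}(1-\alpha))\big)$. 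Exponential moments of $\mT$ then follow because the iteration count is geometric with strictly positive success parameter and the per-iteration cost inherits exponential moments from \hyperref[alg:psi_1]{$\psi^{(1)}$-Alg} and \hyperref[alg:psi_2]{$\psi^{(2)}$-Alg}. The main obstacle I anticipate is the tight tracking of the $s$-dependence inside the partition used by \hyperref[alg:psi_2]{$\psi^{(2)}$-Alg}, in particular near the critical value $s^{\alpha/(1-\alpha)}=\alpha^{(1-2\alpha)/(1-\alpha)}(1-\alpha)$ that separates the two partition regimes and which must be controlled uniformly as $\alpha\uparrow1$.
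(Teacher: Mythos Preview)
Your overall architecture matches the paper's: correctness via the domination $\psi_s\les C\wt\psi_s$ (this is the paper's Proposition~\ref{prop:undershoot}), then the running-time bound by multiplying the expected number of accept--reject iterations by the per-iteration cost coming from Propositions~\ref{prop:psi_1} and~\ref{prop:psi_2}.

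Two concrete points, however. First, you have the attribution of the $s$-dependent terms reversed. By Proposition~\ref{prop:psi_1}, it is \hyperref[alg:psi_1]{$\psi^{(1)}$-Alg} whose cost contains the $\alpha(1-\alpha)^{-1}\log^+(s^{-1})$ term (arising from the preprocessing in \hyperref[alg:Devroye]{LC-Alg}), and it contributes nothing else $s$-dependent and no $\log N$. Conversely, by Proposition~\ref{prop:psi_2}, it is \hyperref[alg:psi_2]{$\psi^{(2)}$-Alg} that carries $(1-\alpha)^{-2}$, $|\log\alpha|$, $\alpha\log^+(s)$, $\log^+\!\big(1/(s^{r}-\alpha^{r-1}(1-\alpha))\big)$ and $\log N$. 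Your text swaps these roles; the final bound is unaffected because both costs are added, but the provenance you sketch is wrong and would mislead you if you tried to fill in details.

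Second, your plan to lower-bound $P_{acc}$ by computing the $\wt\psi_s$-average of the acceptance ratio via ``stable integral representations of the normalisers'' is more work than needed. The paper (Proposition~\ref{prop:undershoot}, inequality~\eqref{eq:inequality_psi_tilde}) proves the \emph{pointwise} lower bound
\[
\frac{c_\psi\psi_s(x,y)}{\wt\psi_s(x,y)}\ \ges\ \frac{1-\alpha}{2}\qquad\text{for all }(x,y),
\]
so the acceptance probability is trivially $\ges(1-\alpha)/2$, giving at most $2/(1-\alpha)$ expected iterations. No integration against $\wt\psi_s$ is required; the ``obstacle'' you anticipate near the critical $s$-value thus does not arise at this step (it is confined to the analysis of \hyperref[alg:psi_2]{$\psi^{(2)}$-Alg}, already packaged in Proposition~\ref{prop:psi_2}).
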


The proof of Proposition~\ref{prop:undershoot_algorithm} is given in Subsection~\ref{subsubsec:proof_conditional_law_complexity} below. In the following two subsections we describe how to sample from the laws given by the densities $\wt\psi^{(1)}_s$ and $\wt\psi^{(2)}_s$.

\subsubsection{Simulation from the density
\texorpdfstring{$\wt\psi_s^{(1)}$}{psi 1}.}
\label{subsec:psi_(1)}

The density $\wt\psi_s^{(1)}(x,y)$ is proportional to 
\[
(x,y)\mapsto\sigma_\alpha(y)\me^{-\sigma_\alpha(y)x}\1_{(s^{-r},\infty)}(x)\1_{(0,1)}(y)
=\me^{-\sigma_\alpha(y)s^{-r}}\1_{(0,1)}(y)
\times \sigma_\alpha(y)\me^{-\sigma_\alpha(y)(x-s^{-r})}\1_{(s^{-r},\infty)}(x).
\]
The first factor (the marginal of the second component) is proportional to a log-concave density and can be sampled easily via \hyperref[alg:Devroye]{LC-Alg} by virtue of being log-concave (by Lemma~\ref{lem:varphi_convex} established below). The second factor (the conditional density of the first component given the second) is a shifted exponential with shift $s^{-r}$ and scale $1/\sigma_\alpha(y)$, which is also easy to sample.

\begin{algorithm}
\caption{($\psi^{(1)}$-Alg) Simulation from the density $\wt\psi_s^{(1)}$\label{alg:psi_1}}
\begin{algorithmic}[1]
\Require{Parameters $\alpha\in(0,1)$ and $s>0$}
\State{Sample $Y_s$ with density $y\mapsto \exp(-\sigma_\alpha(y)s^{-r})$ via \hyperref[alg:Devroye]{LC-Alg}}
\State{Sample $E\sim\Exp(1)$ and \Return $(s^{-r}+E/\sigma_\alpha(Y_s),Y_s)$}
\end{algorithmic}
\end{algorithm}

\begin{prop}\label{prop:psi_1}
\hyperref[alg:psi_1]{$\psi^{(1)}$-Alg} samples from the density $\wt\psi_s^{(1)}$. Moreover, the running time of \hyperref[alg:psi_1]{$\psi^{(1)}$-Alg} is bounded by a multiple of $G+\alpha(1-\alpha)^{-1}\log^+(s^{-1})$ where $G$ is a geometric random variable with parameter $1/5$. In particular, the running time has exponential moments with mean bounded by 
\[
\kappa_{\hyperref[alg:psi_1]{(1)}}\left(1+\alpha(1-\alpha)^{-1}\log^+(s^{-1})\right),
\]
where the constant $\kappa_{\hyperref[alg:psi_1]{(1)}}>0$ depends neither on $s\in(0,\infty)$ nor $\alpha\in(0,1)$.
\end{prop}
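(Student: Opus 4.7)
The plan is to verify correctness via a factorisation of $\wt\psi_s^{(1)}$ into a marginal and a conditional density, and then to bound the running time by invoking the complexity properties of \hyperref[alg:Devroye]{LC-Alg} for log-concave densities. First I would integrate out $x$ from the unnormalised density on $(s^{-r},\infty)\times(0,1)$: since $\int_{s^{-r}}^\infty \sigma_\alpha(y)\me^{-\sigma_\alpha(y)x}\md x=\me^{-\sigma_\alpha(y)s^{-r}}$, the $y$-marginal of $\wt\psi_s^{(1)}$ is proportional to $y\mapsto\me^{-\sigma_\alpha(y)s^{-r}}\1_{(0,1)}(y)$, while the conditional law of $x$ given $y$ is a shifted exponential with shift $s^{-r}$ and rate $\sigma_\alpha(y)$. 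Step~2 of \hyperref[alg:psi_1]{$\psi^{(1)}$-Alg} produces exactly this conditional (via $s^{-r}+E/\sigma_\alpha(Y_s)$, $E\sim\Exp(1)$), so correctness reduces to the claim that Step~1 samples $Y_s$ from the marginal. For this I would invoke the convexity of $\sigma_\alpha$ on $(0,1)$ (established in Lemma~\ref{lem:varphi_convex}), which makes the marginal log-concave and renders \hyperref[alg:Devroye]{LC-Alg} applicable.

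For the running time, Step~2 has constant cost, so the dominant contribution comes from Step~1. The generic guarantee for \hyperref[alg:Devroye]{LC-Alg} recalled in Appendix~\ref{sec:devroye} is that, given access to the log-density (and, depending on the formulation, its derivative or mode), the number of rejection loops until acceptance is stochastically dominated by a geometric random variable $G$ of parameter $1/5$. What remains is to control the per-call overhead of preparing \hyperref[alg:Devroye]{LC-Alg} for the marginal $y\mapsto\me^{-\sigma_\alpha(y)s^{-r}}$. Since $s^{-r}$ can be arbitrarily large, the marginal can become sharply peaked near the minimum of $\sigma_\alpha$, and the numerical work needed to locate the mode and the envelope-support endpoints to the required tolerance is handled by a standard bisection whose number of steps scales like $\Oh(\log^+(s^{-r}))=\Oh(\alpha(1-\alpha)^{-1}\log^+(s^{-1}))$. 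Combining the geometric bound on rejection loops with this deterministic additive overhead yields the pathwise domination by a multiple of $G+\alpha(1-\alpha)^{-1}\log^+(s^{-1})$.

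Taking expectations, the finite mean of $G$ gives the expected bound $\kappa_{\hyperref[alg:psi_1]{(1)}}(1+\alpha(1-\alpha)^{-1}\log^+(s^{-1}))$, with $\kappa_{\hyperref[alg:psi_1]{(1)}}$ absolute since the overhead estimate is uniform in $\alpha$ and $s$. Exponential moments of the running time then follow from the exponential moments of $G$ together with the deterministic additive overhead.

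The main obstacle will be the clean accounting of the preparatory cost. I expect the delicate step to be showing that the bisection needed to locate the mode and bounding-envelope parameters of the log-concave marginal completes in $\Oh(\alpha(1-\alpha)^{-1}\log^+(s^{-1}))$ arithmetic operations uniformly over $\alpha\in(0,1)$, without acquiring additional blow-up factors as $\alpha\uparrow1$ (when $r\to\infty$) or $\alpha\downarrow0$ (when $\sigma_\alpha$ degenerates). This boils down to quantitative estimates on $\sigma_\alpha$ and its derivative near its minimiser on $(0,1)$, which are routine but must be tracked carefully to keep the constant $\kappa_{\hyperref[alg:psi_1]{(1)}}$ free of $\alpha$ and $s$.
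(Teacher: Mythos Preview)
Your proposal is essentially the same approach as the paper's proof: factor $\wt\psi_s^{(1)}$ into a log-concave $y$-marginal (via convexity of $\sigma_\alpha$, Lemma~\ref{lem:varphi_convex}) and a shifted-exponential conditional, then bound the cost of \hyperref[alg:Devroye]{LC-Alg} by the geometric rejection count plus the cost of the binary search in line~\ref{line_find_a_1}.

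Two small clarifications will make your argument precise. First, since $\sigma_\alpha$ is increasing on $(0,1)$, the mode of the marginal is at $0$ and is known; there is no bisection to locate it. The only preprocessing is the binary search for $a_1\in\{2^{-1},2^{-2},\ldots\}$ with $f(a_1)\ges 1/4\ges f(2a_1)$, where $f(y)=\exp(-(\sigma_\alpha(y)-\sigma_\alpha(0+))s^{-r})$. Second, the ``routine'' estimate you allude to is exactly Lemma~\ref{lem:sigma_linear_upperbound}: it gives $\sigma_\alpha(y)-\sigma_\alpha(0+)\les(\pi-2\me^{-1})(1-\alpha)y$ on $(0,1/2)$, so $f(y)\ges 1/4$ whenever $y\ges(\log 4)s^r/((\pi-2\me^{-1})(1-\alpha))$, and the binary search terminates after at most $1+\log_2^+\big((\pi-2\me^{-1})(1-\alpha)s^{-r}/\log 4\big)$ steps, which is bounded by a universal constant times $1+\alpha(1-\alpha)^{-1}\log^+(s^{-1})$. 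The $(1-\alpha)$ factor inside the logarithm is what prevents any blow-up as $\alpha\uparrow 1$.
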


The proof of Proposition~\ref{prop:psi_1} is given in Subsection~\ref{subsec:psi_(1)-2} below. \hyperref[alg:psi_1]{$\psi^{(1)}$-Alg} does not rely on numerical inversion (cf.~\hyperref[alg:inversion_newton_raphson]{NR-Alg}). Its expected running time does therefore not depend on the number of precision bits $N$. 
In fact, the computational complexity of \hyperref[alg:psi_1]{$\psi^{(1)}$-Alg} primarily comes from the pre-processing step (line~\ref{line_find_a_1} of \hyperref[alg:Devroye]{LC-Alg}), which depends on the values of the parameters in \hyperref[alg:psi_1]{$\psi^{(1)}$-Alg}.

\subsubsection{Simulation from the density \texorpdfstring{$\wt\psi_s^{(2)}$}{psi 2}.}
\label{subsec:psi_(2)}

Simulating from the density $\wt\psi_s^{(2)}$ is a delicate matter, particularly because of the sensitivity of the function on $\alpha\in(0,1)$ and on $s$, which is heavy-tailed in line~7 of \hyperref[alg:triple_stable_conditional_on_time]{SFP-Alg} (see Remark~\ref{rem:s-is-stable} above). \hyperref[alg:psi_2]{$\psi^{(2)}$-Alg} below samples from the density $\wt\psi_s^{(2)}(x,y)$, proportional to $\sigma_\alpha(y)\me^{-\sigma_\alpha(y)x}(x-s^{-r})^{-\alpha}$, in two steps. The algorithm first samples, in lines~1--25, the second marginal of $\wt\psi_s^{(2)}$, with density proportional to $y\mapsto \sigma_\alpha(y)^\alpha\me^{-\sigma_\alpha(y)s^{-r}}$ on $(0,1)$. Then, the algorithm samples, in line~26, the first marginal conditionally given the second, with conditional density proportional to $x\mapsto (x-s^{r})^{-\alpha}\me^{-\sigma_\alpha(y)(x-s^{r})}$ on $(s^{-r},\infty)$, which is simply a translated (by $s^{-r}$) gamma random variable with shape $\alpha$ and scale $1/\sigma_\alpha(y)$ and hence easy to sample from. The simulation from the second marginal density is done by breaking up the support depending on the parameters $\alpha$ and $s$ and using rejection sampling with respect to various dominating (proposal) densities on each subinterval. Finding appropriate dominating functions, from which we can sample, is hard since the function $y\mapsto \sigma_\alpha(y)^\alpha\me^{-\sigma_\alpha(y)s^{-r}}$ may have a very large peak at level $\me^{-\alpha}\alpha^\alpha s^{\alpha^2/(1-\alpha)}$ followed by a extremely steep descent to $0$, even for reasonable values of $\alpha$, if the parameter $s$ (recall $s$ is heavy-tailed, see Remark~\ref{rem:s-is-stable} above) takes moderately large values. Figure~\ref{fig:psi2_marginal} below illustrates how peaked the corresponding density may be in such cases.

\begin{figure}[ht]
\centering
    \includegraphics[width=10cm,height=8cm]{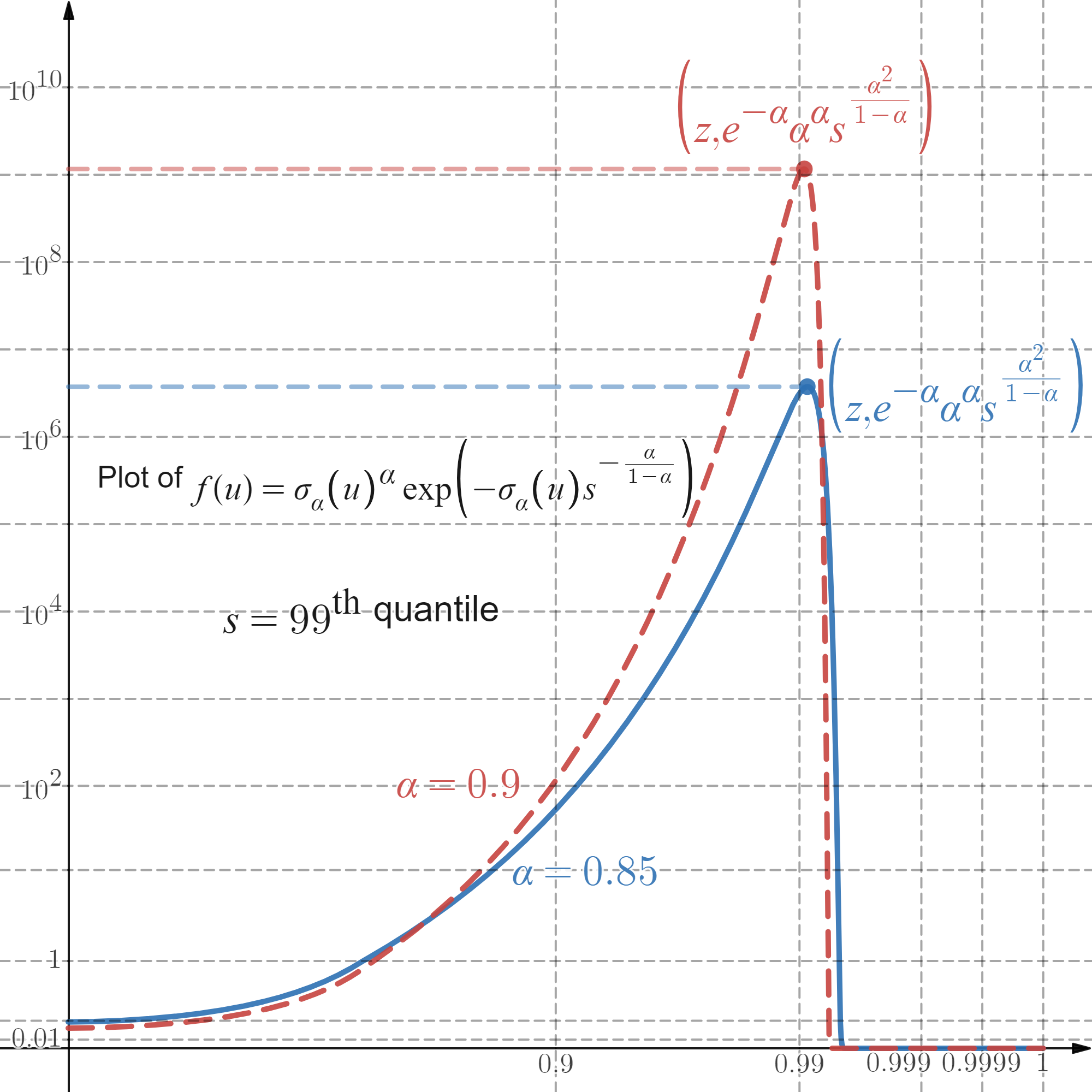}
    \caption{Density of the second marginal of $\wt\psi_s^{(2)}$.\label{fig:psi2_marginal} The figure shows the graphs of the density of the second marginal of $\wt\psi_s^{(2)}$, in a custom logarithmic-style scale, for two processes with $\alpha=0.85$ and $\alpha=0.9$, where the parameter $s$ is at its $99^{\text{th}}$ quantile.
    Thus, on average in one of every $100$ runs of \hyperref[alg:triple_stable_conditional_on_time]{SFP-Alg}, \hyperref[alg:psi_2]{$\psi^{(2)}$-Alg} is called to sample from a density which, at the mode $z=\sigma_\alpha^{-1}(\alpha s^r)$,  takes extremely large value followed by a steep drop towards zero. Moreover, in this case 
    the density is strictly increasing on the interval $(0,z)$ and strictly positive on $(0,1)$. But since  its derivatives
    of all orders are equal to zero at $1$, the graphs appear to be equal to zero on most of the interval $(z,1)$.
    Furthermore, the behaviour of the density on either side of the mode $z$ is markedly different. Taking a uniform upper bound $\me^{-\alpha}\alpha^\alpha s^{\alpha^2/(1-\alpha)}$ would result in an extremely inefficient algorithm with infinite expected running time (in fact, such a bound was used to sample from this type of function in~\cite{chi2016exact}, see Subsection~\ref{subsec:Chi} below). For parameter values as in the graph above, \hyperref[alg:psi_2]{$\psi^{(2)}$-Alg} sets $z_*=1/2$, splits the interval $(0,1)=(0,z_*]\cup(z_*,z]\cup(z,1)$ and uses appropriate proposal densities on each subinterval.}
\end{figure}

\hyperref[alg:psi_2]{$\psi^{(2)}$-Alg} needs to numerically invert several twice differentiable functions $f:[0,1]\to\R$ to produce samples of certain dominating (proposal) densities. This inversion is achieved via the Newton--Raphson root-finding method given in \hyperref[alg:inversion_newton_raphson]{NR-Alg}.  Each call of \hyperref[alg:inversion_newton_raphson]{NR-Alg} requires the input of a specific target function $f$ and an auxiliary function $M:[0,1]\times\N\to\R$ satisfying~\eqref{eq:newton_raphson_auxiliary} (see Appendix~\ref{sec:NewtonRaphson}).

\begin{algorithm}
  \caption{($\psi^{(2)}$-Alg) Simulation from the density $\wt\psi_s^{(2)}$\label{alg:psi_2}}
  \begin{algorithmic}[1]
\Require{Parameters $\alpha\in(0,1)$ and $s>0$}
\State{Set $z\gets 0$ if $\alpha s^r<\sigma_\alpha(0+)$. Else $z\gets\sigma_\alpha^{-1}(\alpha s^r)$ via
\hyperref[alg:inversion_newton_raphson]{NR-Alg} with $f=\log\sigma_\alpha$ and $M$ in~\eqref{eq:sigma_auxiliary_function}}
\label{step:inversion_sigma}
\State{Define the function $\tilde f(y)=\sigma_\alpha(y)^\alpha\exp(-\sigma_\alpha(y)s^{-r})$ and set $z_*\gets \min\{z,1/2\}$}
\State{Generate discrete variable $D$ on $\{0,1,2\}$ with weights $(\int_0^{z_*}\tilde f(y)\md y,\int_{z_*}^z\tilde f(y)\md y, \int_{z}^1\tilde f(y)\md y)$}
\label{step:generate_discrete_rv}
\If{$D=2$}
    \State{Generate $Y_s$ from the density proportional to $\tilde f|_{[z,1]}$ via \hyperref[alg:Devroye]{LC-Alg}}
    \label{step:devroye_in_psi_2}
\ElsIf{$D=0$}
    \Repeat
        \State{Generate $Y_s$ with distribution $f$ in~\eqref{eq:distribution_u*sigma^alpha} 
        via \hyperref[alg:inversion_newton_raphson]{NR-Alg} with $M$ in~\eqref{eq:u*sigma^alpha_auxiliary}}
        \label{step:inversion_D=0}
        \State{Generate $V\sim\Unif(0,1)$}
    \Until{$V\les \exp(-\sigma_\alpha(Y_s)s^{-r})/(1+\alpha Y_s\sigma_\alpha'(Y_s)/\sigma_\alpha(Y_s))$}
\Else  \Comment{(here, $D=1$)}
\If{$\alpha\les1/2$}
    \State{Set $a\gets \sin(\pi(1-\alpha))$ and $b\gets\pi\alpha(1-\alpha)\cos(\pi\alpha)$}
    \Repeat
        \State{Generate $Y_s$ with distribution $f$ defined in~\eqref{eq:density_c1} via \hyperref[alg:inversion_newton_raphson]{NR-Alg} with $M$ in~\eqref{eq:c2_auxiliary}}\label{step:inversion_alpha<1/2}
        \State{Generate $V\sim\Unif(0,1)$}
    \Until{$V\les \sigma_\alpha(Y_s)^\alpha\me^{-\sigma_\alpha(Y_s)s^{-r}}a^{1-\alpha}2^r(1-Y_s)^r/(a+b(1-Y_s))$}
    \Else
    \State{Set $c\gets\rho'(1/2)/\rho(1/2)^2$, $a\gets \rho(1/2)^{r-1}$ and $b\gets \rho(z)^{r-1}-a$}
    \Repeat
        \State{Generate $Y_s$ with distribution $f$ defined in~\eqref{eq:distribution_c2} via \hyperref[alg:inversion_newton_raphson]{NR-Alg} with $M$ in~\eqref{eq:sigma_auxiliary_function}}
        \label{step:inversion_alpha>1/2}
        \State{Generate $V\sim\Unif(0,1)$}
    \Until{$V\les c\me^{-\sigma_\alpha(Y_s)s^{-r}}\rho(Y_s)^2/\rho'(Y_s)$}
    \EndIf
\EndIf
\State{Generate $E\sim\Gam(1-\alpha,1)$, set $\zeta_s\gets s^{-r}+E/\sigma_\alpha(Y_s)$ and \Return $(\zeta_s,Y_s)$}
\end{algorithmic}
\end{algorithm}

\begin{remark}
\label{rem:prob_discrete_rv}
For the same reason we mentioned in Remark~\ref{rem:prob_p'}, the integral $\int_a^b\tilde f(y)\md u$ can be computed efficiently to arbitrary precision via Gaussian--Legendre quadrature. The computation time 
for Step~\ref{step:generate_discrete_rv} in \hyperref[alg:undershoot_stable]{SU-Alg}
is thus assumed to be constant.
\end{remark}

\begin{prop}\label{prop:psi_2}
\hyperref[alg:psi_2]{$\psi^{(2)}$-Alg} samples from the 
law given by the density $\wt\psi_s^{(2)}$. Moreover, the running time of \hyperref[alg:psi_2]{$\psi^{(2)}$-Alg} is bounded above by a constant multiple of 
\[
G+\log^+\big(1/\big(s^{\alpha/(1-\alpha)}-\alpha^{(2\alpha-1)/(1-\alpha)}\big)\big)+\alpha\log^+(s)+|\log\alpha|+\log N,
\] 
where $N$ specifies the number of precision bits of the output and $G$ is a geometric random variable with  probability of success $\p(G=1)$ bounded below by a multiple of $(1-\alpha)^2$. In particular, the running time of \hyperref[alg:psi_2]{$\psi^{(2)}$-Alg} has exponential moments and its mean is bounded by 
$$
\kappa_{\hyperref[alg:psi_2]{(2)}} 
\left[(1-\alpha)^{-2}
    +\log^+\big(1/\big(s^{\alpha/(1-\alpha)}
    -\alpha^{(2\alpha-1)/(1-\alpha)}\big)\big)
    +\alpha\log^+(s)+|\log\alpha|+\log N\right],
$$ 
where the constant $\kappa_{\hyperref[alg:psi_2]{(2)}}$ depends neither on $s\in(0,\infty)$ nor $\alpha\in(0,1)$.
\end{prop}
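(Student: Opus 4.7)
The plan is to establish correctness first and then analyse the computational cost of \hyperref[alg:psi_2]{$\psi^{(2)}$-Alg}. For correctness, I would decompose $\wt\psi_s^{(2)}$ into its marginal in the second coordinate, proportional to $y\mapsto \sigma_\alpha(y)^\alpha\me^{-\sigma_\alpha(y)s^{-r}}$ on $(0,1)$, and the conditional density of the first coordinate given the second, which is a translated $\Gam(1-\alpha,\sigma_\alpha(y))$ law on $(s^{-r},\infty)$---so that line~26 of the algorithm is clearly correct. The simulation of the marginal is partitioned across $(0,z_*]$, $(z_*,z]$ and $(z,1)$ via the discrete variable $D$ whose weights are the corresponding integrals of $\tilde f$ (evaluated by Gauss--Legendre quadrature per Remark~\ref{rem:prob_discrete_rv}). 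For each subinterval I would verify that the proposal density dominates $\tilde f$ and that the explicit ratios in lines~10, 16 and~21 are valid rejection-sampling ratios.

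For the three proposals separately: on $(z,1)$, since $z$ coincides with the mode $\sigma_\alpha^{-1}(\alpha s^r)$ of $\tilde f$ whenever this mode lies in $(0,1)$, I would show that $\log\tilde f$ is concave on $[z,1]$ using convexity of $\sigma_\alpha$ (from a lemma in the spirit of Lemma~\ref{lem:varphi_convex}), so that \hyperref[alg:Devroye]{LC-Alg} applies directly. On $(0,z_*]$, the proposal distribution~\eqref{eq:distribution_u*sigma^alpha} integrates a primitive of $\sigma_\alpha^{\alpha}$ times a Jacobian factor, collapsing the acceptance ratio to the expression in line~10. On $(z_*,z]$, the proposal splits by regime: when $\alpha\leq 1/2$, the bound is a suitable polynomial in $(1-y)$ whose primitive admits a closed form that can be inverted via \hyperref[alg:inversion_newton_raphson]{NR-Alg} with the auxiliary $M$ of~\eqref{eq:c2_auxiliary}; when $\alpha>1/2$, the bound is expressed through the auxiliary function $\rho$ and the distribution~\eqref{eq:distribution_c2}. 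In each case the main analytic step is to prove that the chosen dominating function genuinely lies above $\tilde f$ on its subinterval, which reduces to monotonicity/convexity estimates for $\sigma_\alpha$ and $\rho$.

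For the running time, each call to \hyperref[alg:inversion_newton_raphson]{NR-Alg} requires $\Oh(\log N)$ iterations once~\eqref{eq:newton_raphson_auxiliary} is verified for the given auxiliary $M$, together with a bisection pre-processing step whose cost is logarithmic in the parameters. Tracking these costs carefully produces the summands $\log^+(1/(s^{\alpha/(1-\alpha)}-\alpha^{(2\alpha-1)/(1-\alpha)}))$, $\alpha\log^+(s)$ and $|\log\alpha|$, which originate respectively from the separation between $z_*$ and $z$, the scale of $\sigma_\alpha$ near $y=1$ and the degeneracy of $\sigma_\alpha$ near $y=0$ as $\alpha\downarrow 0$. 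The main obstacle will be to bound the overall acceptance probability in each rejection loop below by a positive multiple of $(1-\alpha)^2$ uniformly in $s\in(0,\infty)$ and $\alpha\in(0,1)$; this is delicate in the regime $\alpha\uparrow 1$ where $\tilde f$ becomes extremely peaked (cf.\ Figure~\ref{fig:psi2_marginal}), so that any coarse domination wastes a factor proportional to the effective width of the peak, which scales like $(1-\alpha)$, leading to a loss of at most $(1-\alpha)^2$ after accounting for normalisation on both sides. Once this uniform lower bound is established, the number of proposals is stochastically dominated by a geometric $G$ with $\p(G=1)$ bounded below by a constant times $(1-\alpha)^2$, and a Wald-type identity combined with the per-iteration $\Oh(\log N)$ cost of \hyperref[alg:inversion_newton_raphson]{NR-Alg} and the logarithmic $(\alpha,s)$-dependent pre-processing terms yields both the finite exponential moments of the running time and the expected-time bound claimed in the proposition.
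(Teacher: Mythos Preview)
Your outline follows the paper's proof closely: the same marginal/conditional split of $\wt\psi_s^{(2)}$, the same three-interval decomposition via $D$, log-concavity of $\tilde f$ on $[z,1]$ for \hyperref[alg:Devroye]{LC-Alg}, and rejection sampling with NR-inverted proposals on the other pieces.

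Two places where your sketch is imprecise relative to the paper's execution. First, the term $\log^+\big(1/(s^{r}-\alpha^{(2\alpha-1)/(1-\alpha)})\big)$ does not come from ``the separation between $z_*$ and $z$'' but from the bisection cost of localising $z=\sigma_\alpha^{-1}(\alpha s^r)$ in line~1 when $z$ is near $0$: since $\sigma_\alpha(z)-\sigma_\alpha(0+)=\alpha(s^r-\alpha^{r-1}(1-\alpha))$, Lemma~\ref{lem:sigma_linear_upperbound} gives $1/z\les C/(s^r-\alpha^{r-1}(1-\alpha))$, and Proposition~\ref{prop:inverse_sigma} then converts $\log(1/(z(1-z)))$ into the stated bound. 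Second, and more substantively, the $(1-\alpha)^2$ floor on the acceptance probability is not a width-of-peak heuristic. In the paper it enters \emph{only} in the sub-case $D=1$, $\alpha>1/2$, where one writes the acceptance ratio as $h_{c,2}(u)=C_\alpha\,\me^{-\sigma_\alpha(u)s^{-r}}\rho(u)^2/\rho'(u)$ with $C_\alpha=\rho'(\tfrac12)/\rho(\tfrac12)^2\ges c(1-\alpha)$ and then uses that $\rho^2/\rho'=-\big(\tfrac{\md}{\md u}(1/\rho)\big)^{-1}$ is decreasing on $(0,1)$ to get $\rho(u)^2/\rho'(u)\ges\rho(1)^2/\rho'(1)\ges c'(1-\alpha)$. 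That monotonicity is equivalent to the concavity of $1/\rho$, which is Lemma~\ref{lem:1_over_rho_concave} and is the non-trivial analytic ingredient your plan does not mention; your peak-width argument would not yield it. In the remaining sub-cases ($D=0$, and $D=1$ with $\alpha\les 1/2$) the acceptance probabilities are bounded below by absolute constants independent of $\alpha$, so the $(1-\alpha)^2$ loss is confined to that single branch.
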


The proof of Proposition~\ref{prop:psi_2} is given in Subsection~\ref{subsec:psi_(2)-2} below. The computational cost in \hyperref[alg:psi_2]{$\psi^{(2)}$-Alg} comes from various sources: 
(I) applications of the inversion \hyperref[alg:inversion_newton_raphson]{NR-Alg}, which may depend on the parameters $\alpha\in(0,1)$ and $s>0$ (recall that the parameter $s$ is in fact random in \hyperref[alg:undershoot_stable]{SU-Alg}, which calls \hyperref[alg:psi_2]{$\psi^{(2)}$-Alg});
(II) in the cases $D=0$ and $D=1$,
we also perform an accept-reject step where the acceptence probability depends on the parameters $\alpha\in(0,1)$ and $s>0$; (III) in the case $D=2$, the pre-processing (line~\ref{line_find_a_1} in \hyperref[alg:Devroye]{LC-Alg})
also adds to the final computational complexity since the function \hyperref[alg:Devroye]{LC-Alg} is applied to in line~\ref{step:devroye_in_psi_2}  of \hyperref[alg:psi_2]{$\psi^{(2)}$-Alg} depends both on
$\alpha\in(0,1)$ and $s>0$. We note that the accept-reject step in the case $D=2$ is embedded in \hyperref[alg:Devroye]{LC-Alg} and has uniformly bounded (in parameter values) expected running time. 


\subsection{How \emph{not} to sample the stable undershoot \texorpdfstring{$S_{\tau_b-}|\{\tau_b=t, b(\tau_b)=w, \Delta_S(\tau_b)>0\}$}{undershoot}}
\label{subsec:How_not_to_sample}

At a glance, there are several apparently feasible ways to produce a sample from the undershoot law $\Unif\rmS(t,w)$ of $S_{\tau_b-}$, under $\p$, conditional on $\{\tau_b=t, b(\tau_b)=w,\Delta_S(\tau_b)>0\}$. However, most of these simulation methods are either infeasible altogether or their computational complexity grows rapidly as either $\alpha\downarrow 0$ or $\alpha\uparrow 1$. In this section, we will briefly discuss some of these ``approaches''.

First recall that sampling from the undershoot law is, by Proposition~\ref{prop:joint_law}(c), essentially reduced to producing samples from the density proportional to $x\mapsto \varphi_\alpha(x)(s-x)^{-\alpha}\1_{\{x<s\}}$ where $s=(\theta t)^{-1/\alpha}w=(\theta \tau_b)^{-1/\alpha}b(\tau_b)>0$ and $\varphi_\alpha$ is the density of $S_{1/\theta}$ and given in~\eqref{eq:zolotarev_representation}. A first guess would be to employ rejection sampling on this density. Indeed, since the function $x\mapsto (s-x)^{-\alpha}$ is unbounded, we cannot propose from the density $\varphi_\alpha$. However, the density $\varphi_\alpha$ \emph{is} bounded and hence we may propose from the density proportional to $x\mapsto (s-x)^{-\alpha}$ and accept the sample with probability proportional to $\varphi_\alpha(x)$. For a fixed $s$, this algorithm has a running time with finite mean; however, we require an algorithm that performs well for $s=(\theta\tau_b)^{-1/\alpha}b(\tau_b)\eqd S_{1/\theta}$. Since $x\mapsto(s-a)^{-\alpha}$ puts most of its mass close to $s$ (which is stable distributed and thus heavy tailed), the acceptance probability of such an algorithm is asymptotically equivalent to $\varphi_\alpha(s)/\sup_{u\in(0,\infty)}\varphi_\alpha(u)\sim C s^{-\alpha-1}$ as $s\to\infty$ for some $C>0$. In turn, this means that the running time is bounded below by $C'\max\{1,S_{1}^{\alpha+1}\}$, for some $C'>0$, which does not have a finite mean (in fact, it has a finite moment $p>0$ if and only if $p<\alpha/(\alpha+1)$).

Another possibility would be to perform a numerical inversion of the distribution function $F_\alpha$ proportional to $x\mapsto\int_0^x \varphi_\alpha(y)(s-y)^{-\alpha}\md y$. Since 
\[
\frac{F_\alpha'(x)}{F_\alpha(x)}
=\frac{\varphi_\alpha(x)(s-x)^{-\alpha}}{\int_0^x \varphi_\alpha(y)(s-y)^{-\alpha}\md y},
\]
where $\varphi_\alpha$ can be computed to arbitrary precision using numerical quadrature, one may expect Newton--Raphson's method to perform well in inverting $F_\alpha$ numerically. It is, however, computationally intensive. Every step in Newton--Raphson method and in the binary search required prior to the quadratic convergence of Newton--Raphson's method evaluates the double integral in $F_\alpha$ (recall that $\varphi_\alpha$ is itself given in terms of an integral). If the numerical quadrature for computing $\varphi_\alpha$ accurately requires $n$ nodes, the corresponding quadrature for $F_\alpha$ needs approximately $n^2$ nodes. Since we typically have $n\in[10^4,10^5]$ for double-precision floating-point numbers, each numerical evaluation of $F_\alpha$ requires evaluating at least $10^8$ functions (sums, products, exponentiation and trigonometric functions). This makes this approach infeasible despite the good properties of Newton--Raphson method and the boundedness and smoothness of the function $\varphi_\alpha$.

A third option is to maintain parts of our current methodology and modify the parts of the algorithm that sample from a density of the form $u\mapsto\sigma_\alpha(u)^p\exp(-\sigma_\alpha(u)s)$ for some $p\in(0,\infty)\setminus\{1\}$. For instance, one may attempt a rejection-sampling algorithm via elementary bounds on $\sigma_\alpha$, such as those found in Lemma~\ref{lem:sigma_bound} below. Such algorithms are feasible and, in fact, we use some of them for certain parameter combinations. The drawback of those approaches is that the acceptance probability becomes tiny as $\alpha\to 1$, which is why we introduce the auxiliary parameter $z_*$ and other simulation algorithms for certain parameter combinations. Indeed, the acceptance probability is often upper bounded by $c^{r}$ for some $c\in(0,1)$ where we recall that $r=\alpha/(1-\alpha)$. This probability is incredibly small even for moderate values of $\alpha$. For instance, the quotient between the lower and upper bounds on $\sigma_\alpha$ in Lemma~\ref{lem:sigma_bound} is proportional to $(\pi/4)^{r+1}(1-\alpha)^{r}$, as $\alpha\to 1$, which is approximately $4.28\times10^{-6}$ (resp. $8.93\times10^{-11}$) when $\alpha=0.85$ (resp. $\alpha=0.9$).

Our \hyperref[alg:undershoot_stable]{SU-Alg} (and its subalgorithms) are in fact informed by the pitfalls described above. In particular, the choices made in \hyperref[alg:psi_2]{$\psi^{(2)}$-Alg} may appear arbitrary but are, in fact, designed to control the computational complexity. It remains an open problem to design an exact simulation algorithm with uniformly bounded expected running time on the entire parameter space $\alpha\in(0,1)$. Figure~\ref{fig:large_alpha} below appears to suggest that the complexity of our algorithm grows as $(1-\alpha)^{-0.2}$ as $\alpha\uparrow1$, which is much smaller than $(1-\alpha)^{-3}$, as suggested by our theoretical bounds. 

\subsection{Simulation of \texorpdfstring{$\wt\psi_s^{(1)}$}{psi1} and \texorpdfstring{$\wt\psi_s^{(2)}$}{psi2} via numerical inversion with numerical integrals}
\label{subsec:direct_numerical_inversion}
It is natural to enquire\footnote{We thank an anonymous referee for drawing our attention to this questions.} about the performance of a simulation method based on a direct numerical inversion of the distribution functions corresponding to the marginals $y\mapsto \int \wt\psi_s^{(1)}(x,y)\md x$ and $y\mapsto \int \wt\psi_s^{(2)}(x,y)\md x$.
Using direct numerical inversion via Householder's method with $k=4$ (see \hyperref[alg:inversion_householder]{H-Alg} in Appendix~\ref{sec:NewtonRaphson} below), we can sample from these marginals of $\wt\psi_s^{(1)}$ and $\wt\psi_s^{(2)}$. 

Recall that \hyperref[alg:psi_1]{$\psi^{(1)}$-Alg} requires no numerical integration, while \hyperref[alg:psi_2]{$\psi^{(2)}$-Alg} uses 3 such integrations (see line~\ref{step:generate_discrete_rv}), each over an interval where the integrand is monotone. In contrast, 
since every step of \hyperref[alg:inversion_householder]{H-Alg} evaluates the distribution function, the numerical integration is used much more extensively over regions where the integrands are not monotone. 
Thus the direct inversion method is more 
sensitive to large values of the number $N$ of precision digits and to instabilities in the numerical integration appearing for such large $N$. 

We compared numerically the outputs of \hyperref[alg:psi_1]{$\psi^{(1)}$-Alg} and \hyperref[alg:psi_2]{$\psi^{(2)}$-Alg} with the outputs of their 
direct inversion counterparts:  Figure~\ref{fig:psi_num_inv} shows that our algorithms and the direct inversion alternatives sample from the same laws. We found that  algorithms 
\hyperref[alg:psi_1]{$\psi^{(1)}$-Alg} and \hyperref[alg:psi_2]{$\psi^{(2)}$-Alg}
were on average approximately $32.01$ and $14.82$ times faster, respectively than the direct inversion alternatives over the range of
$\alpha\in\{.1,.3,.5,.7,.9\}$.

\begin{figure}[ht]
\centering
    \includegraphics[width=8.3cm]{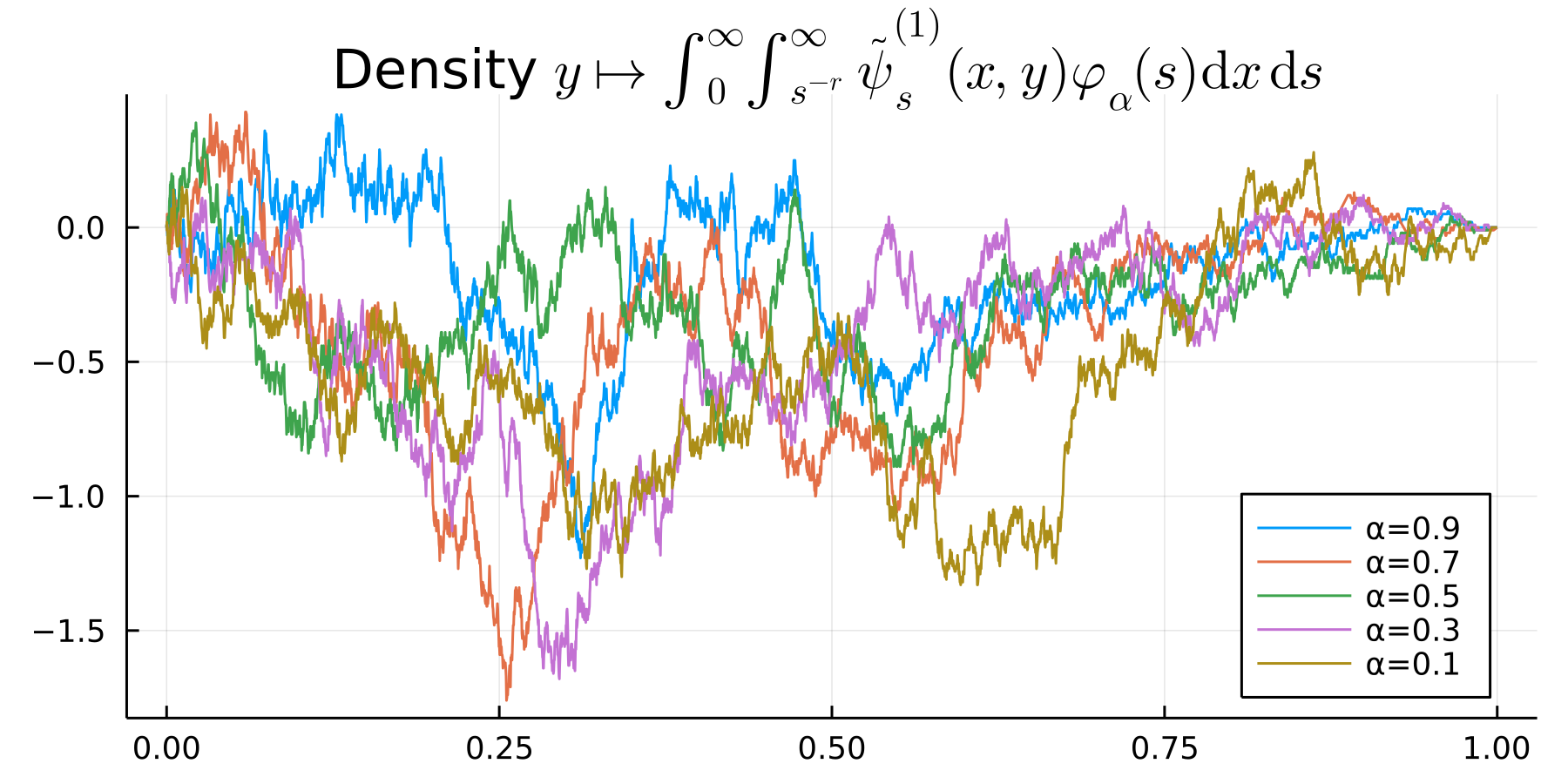}
    \includegraphics[width=8.3cm]{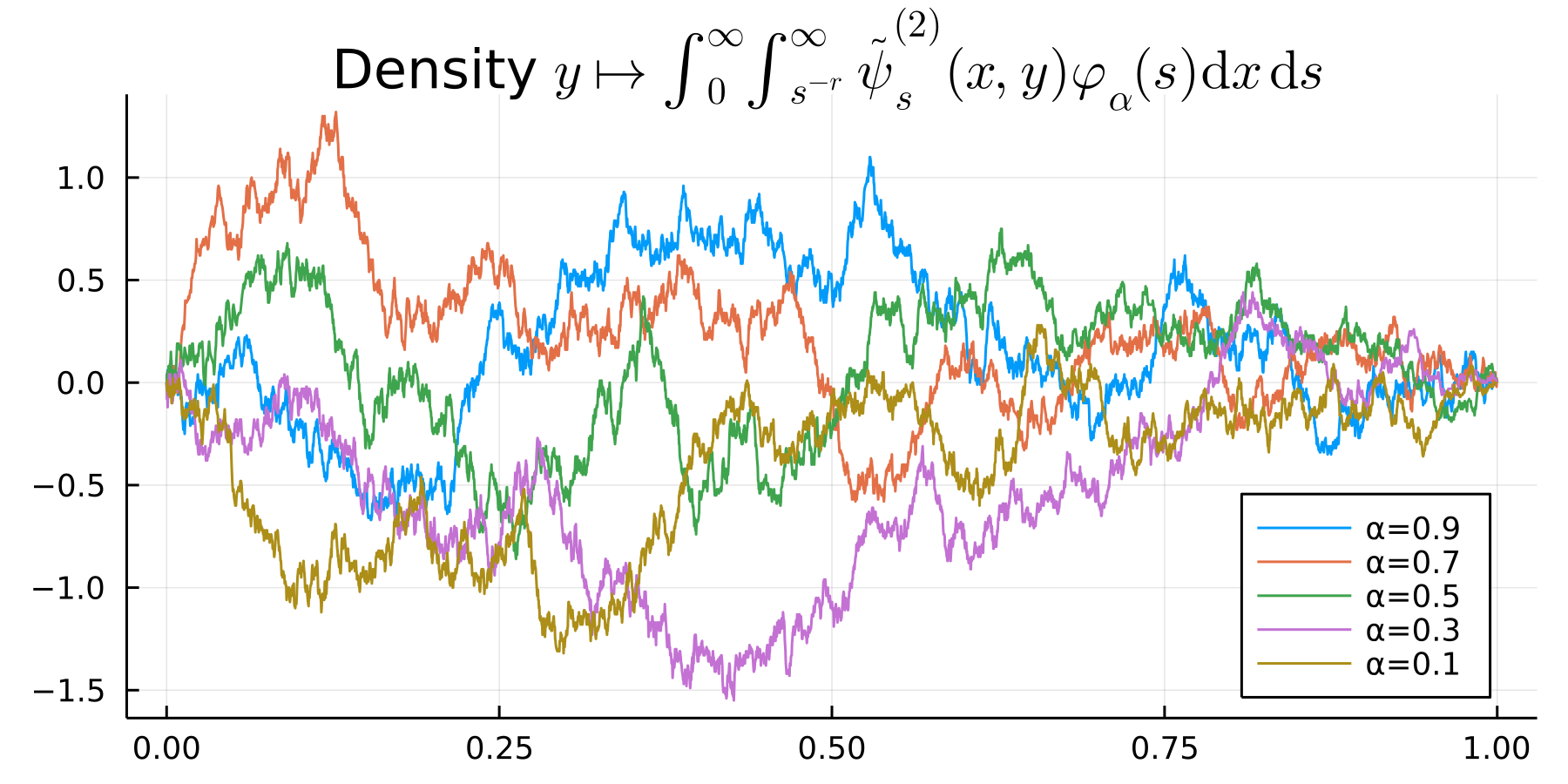}\caption{Numerical inversion with numerical integrals.\label{fig:psi_num_inv} The graph on the left (resp. right) show the function $y\mapsto \sqrt{n}(F_n(y)-G_n(y))$ where $F_n$ and $G_n$ are the empirical distribution functions of the samples produced using \hyperref[alg:psi_1]{$\psi^{(1)}$-Alg} (resp. \hyperref[alg:psi_2]{$\psi^{(2)}$-Alg}) and direct numerical inversion, respectively, for $n=10^4$. Such functions indeed resemble  Brownian bridges time-changed  by the corresponding distribution functions. All samples pass the two-sample Kolmogorov-Smirnov test, confirming that our algorithms and direct numerical inversion produce samples from the same laws.}
\end{figure}

\subsection{Infinite expected running time of the algorithm in~\texorpdfstring{\cite[\S7]{chi2016exact}}{Chi (2016)}}
\label{subsec:Chi}

In~\cite[\S7]{chi2016exact},  an algorithm that produces a sample of the first-passage triplet of a tempered stable process is given. Such an algorithm has almost surely finite running time. However, it can be seen as follows, that its expected running time is infinite. 

Suppose for simplicity that $\theta=1$ and $b$ is bounded by the variable denoted by $r$ in the algorithm in~\cite[\S7]{chi2016exact}. Consider Step 3 of the algorithm in~\cite[\S7]{chi2016exact}, which uses rejection sampling to simulate the pair $(S_{\tau_b-},\Delta_{S}(\tau_b))$ given $\{\tau_b=t\}$ (recall from Proposition~\ref{prop:joint_law} that  $\tau_b\eqd B^{-1}(S_1)$, where $B^{-1}$ is the inverse of $t\mapsto t^{-1/\alpha}b(t)$). The proposal for $s=S_{\tau-}$ is $s=\beta b(t)$ for some $\beta\sim \text{Beta}(1,1-\alpha)$, with acceptance probability $p=h(t^{-1/\alpha}s,U)/M_\alpha$, where $U\sim\Unif(0,1)$ is an auxiliary variable, $h(x,u)\coloneqq \sigma_\alpha(u)x^{-r-1}\me^{-\sigma_\alpha(u)x^{-r}}$ and $M_\alpha\coloneqq(1-\alpha)^{1-1/\alpha}\alpha^{-1-1/\alpha}\me^{-1/\alpha}$ is a global bound on $h$. Thus, the expected runnig time of this step equals $\E[C]$, where $C\coloneqq 1/p\eqd M_\alpha/h(\beta S_1,U)$ and the variables $U$, $\beta$ and $S_1$ are independent. Hence, recalling that $r=\alpha/(1-\alpha)$, we have
\begin{equation}
\begin{aligned}
\E[C]&=M_\alpha\E[\sigma_\alpha(U)^{-1}(S_1\beta)^{r+1}\exp(\sigma_\alpha(U)(S_1\beta_1)^{-r})]
\ges M_\alpha\E[\sigma_\alpha(U)^{-1}(S_1\beta)^{r+1}] \\
&=M_\alpha\E[\sigma_\alpha(U)^{-1}]\E[S_1^{r+1}]\E[\beta^{r+1}]
\end{aligned}
\end{equation}
However, $\E[S_1^\eta]<\infty$ if and only if $\eta<\alpha$ by~\eqref{eq:mellin_tranform} below. Since $r=\alpha/(1-\alpha)>0$ for all $\alpha\in(0,1)$, the expected running time of the algorithm in~\cite[\S7]{chi2016exact} is indeed infinite. 

We note that, similarly, Step 5 of the algorithm in~\cite[\S7]{chi2016exact} also has infinite expected running time. Since Step~5 is more complex than Step~3, we made the phenomenon above explicit only for Step~3.

\subsection{Can an implemented simulation algorithm  be exact?}
\label{subsec:exact?}

The answer depends on the definition of exact simulation. Since our algorithms are implemented inside computers with finite resources, the output of any algorithm cannot have the same law as the variable it is simulating if the variable has a density. However, we may consider an algorithm to be \emph{practically exact} if a chosen distance between the law of the ideal variable $I$ and the algorithm's output $A$ can be controlled within a given multiple of \emph{machine precision} (see Appendix~\ref{app:exact} for further discussion of this question and various natural suggestions for the distance). Our algorithms are practically exact with respect to the Prokhorov metric (which metrises weak convergence) because we can create a coupling under which, with high probability, the distance $|I-A|$ is bounded by a constant multiple of machine precision, and this multiplicative constant is only dependent on the parameters $(\alpha,\,\theta\,q)$ and $b(0)$, see details in Appendix~\ref{app:exact} below.

\section{Applications}
\label{sec:applications}
The main objective of this section is to present some applications of our algorithms. First, we present some numerical evidence for the theoretical bounds on the expected running time of \hyperref[alg:improved_triple_temper_stable]{TSFFP-Alg} and \hyperref[alg:triple_stable_conditional_on_time]{SFP-Alg}. We then show how \hyperref[alg:improved_triple_temper_stable]{TSFFP-Alg} can be used to sample the first-passage event of a two-sided tempered stable process, which in turn can be used to price barrier options via Monte Carlo. Next, we use the probabilistic representation of the solution of a fractional partial differential equation (FPDE)~\cite{hernandez2017generalised} along with \hyperref[alg:improved_triple_temper_stable]{TSFFP-Alg} to produce Monte Carlo estimators of the solution of such a FPDE.

\subsection{Implementation}
\label{sec:numerics}

In this section, we showcase an implementation of our algorithms in Julia computing language. Our first goal is to show that such an implementation makes the simulation of the first-passage event feasible and fast in practice. Our second goal is to show that, in practice, our bounds on the expected running time of \hyperref[alg:improved_triple_temper_stable]{TSFFP-Alg} and \hyperref[alg:triple_stable_conditional_on_time]{SFP-Alg} overestimate the true complexity. 

\begin{figure}[ht]
\centering
\begin{subfigure}[b]{0.49\textwidth}
 \centering
 \includegraphics[width=\textwidth]{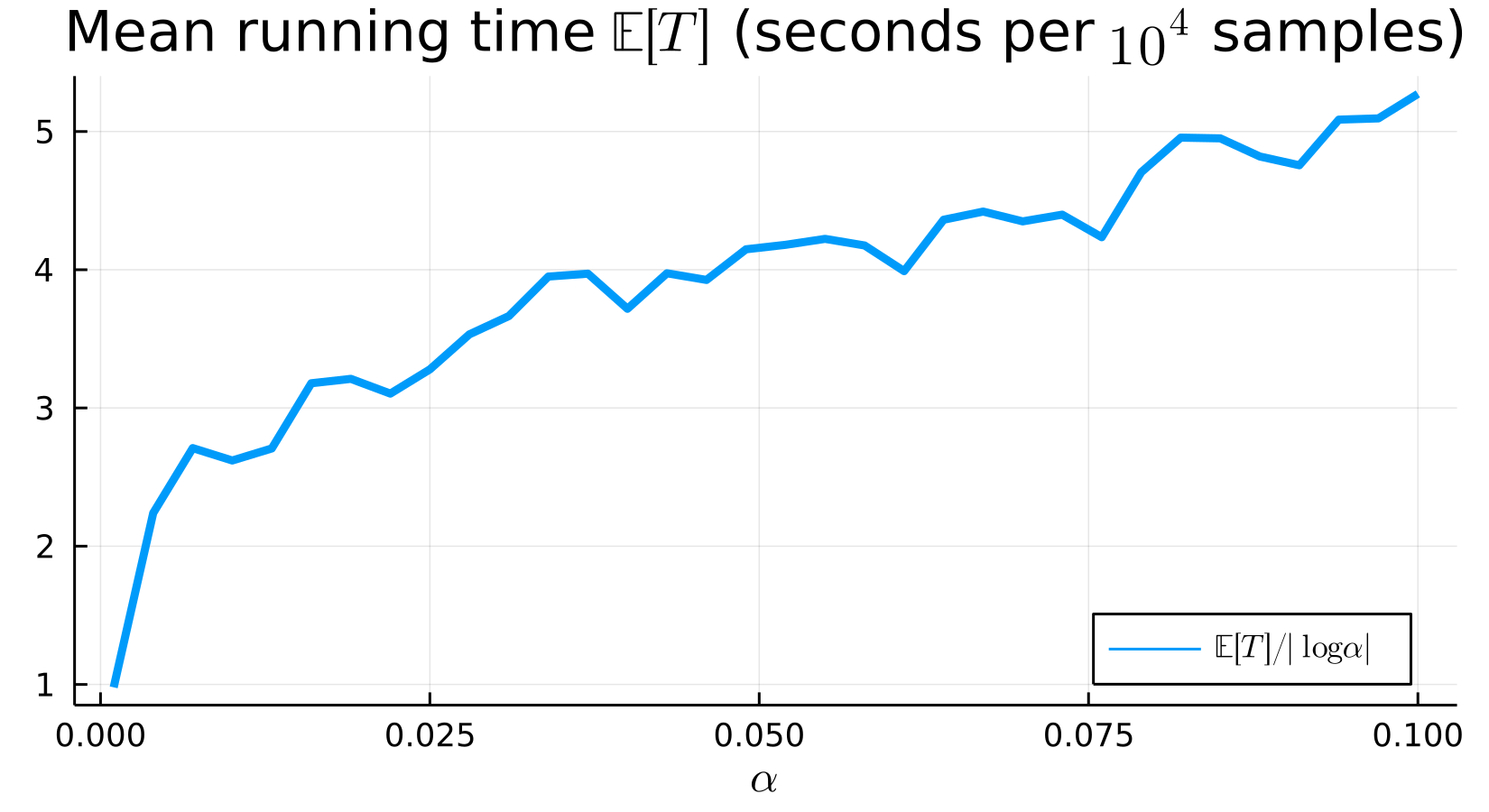}
 \caption{\footnotesize By Theorem~\ref{thm:expectation_of_time_complexity}, $\E[T]=\Oh(|\log\alpha|)$ as $\alpha\downarrow0$. The picture confirms this: $\E[T]/|\log\alpha|$ decreases to a number close to $1$ as $\alpha\downarrow0$ (the reported value of $\E[T]$ for $\alpha=0.001$ is $9.30$ seconds per $10^4$ samples).}
 \label{fig:small_alpha}
\end{subfigure}
\hfill
\begin{subfigure}[b]{0.49\textwidth}
 \centering
 \includegraphics[width=\textwidth]{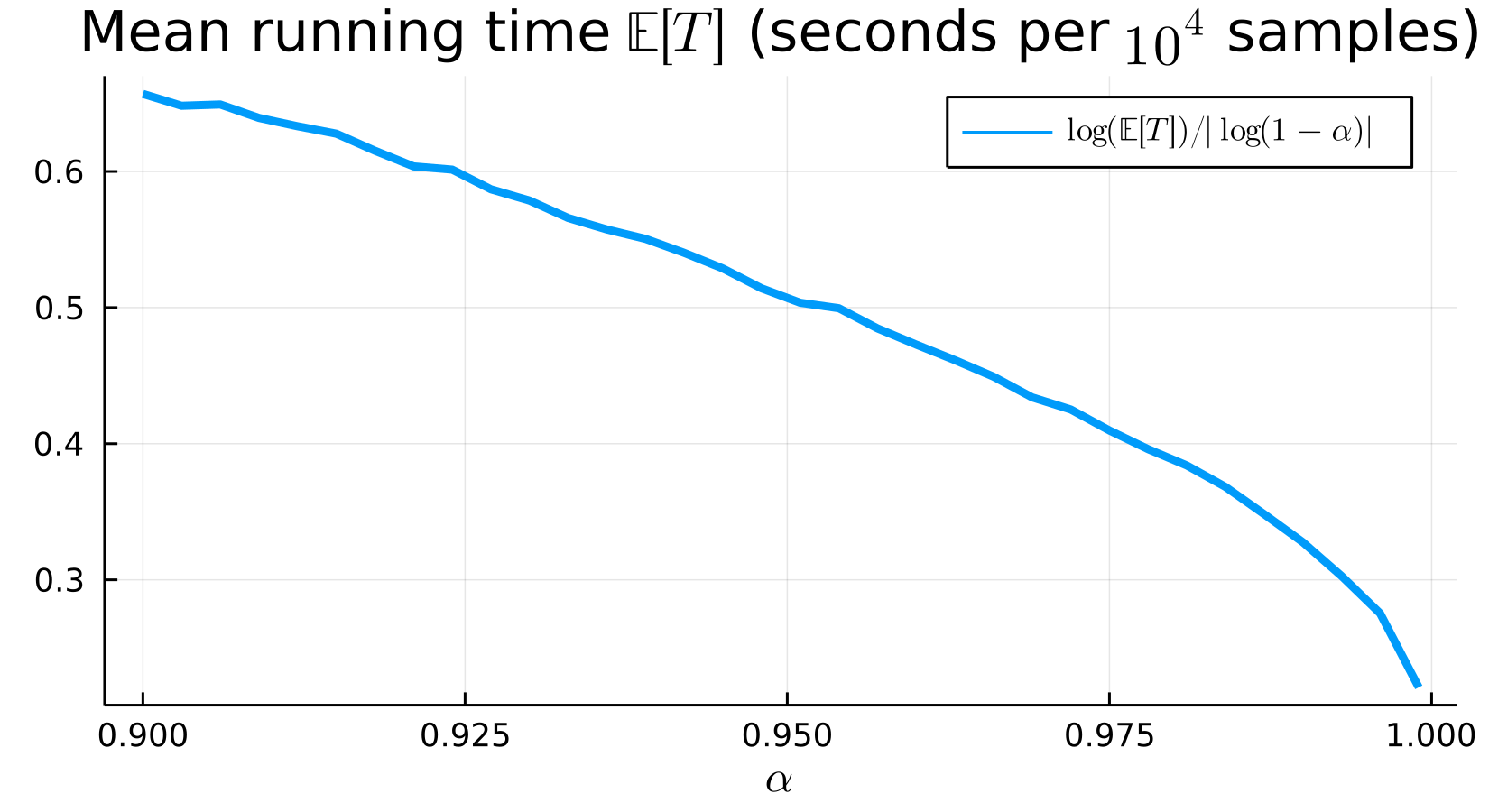}
 \caption{\footnotesize By Theorem~\ref{thm:expectation_of_time_complexity}, $\E[T]=\Oh((1-\alpha)^{-3})$ as $\alpha\uparrow1$. The picture confirms this: $\log(\E[T])/|\log(1-\alpha)|$ decreases as $\alpha\uparrow1$ to a number close to $0.2$ (the reported value of $\E[T]$ for $\alpha=0.999$ is $6.28$ seconds per $10^4$ samples).}
 \label{fig:large_alpha}
\end{subfigure}\caption{Implementation of \hyperref[alg:triple_stable_conditional_on_time]{SFP-Alg} with $t_*=\infty$ on Julia.\label{fig:stable_case} The average time $\E[T]$ is measured in seconds taken for every $10^4$ samples.\label{fig:large_small_alpha}}
\end{figure}

Observe that the expected running time in Figure~\ref{fig:large_small_alpha} appears to be smaller for $\alpha=0.999$ (close to $1$) than for $\alpha=0.001$ (close to $0$). Although this is surprising since our upper bounds are of order $\Oh((1-\alpha)^{-3})$ for large $\alpha$ and of order $\Oh(|\log\alpha|)$ for small $\alpha$, it is important to consider a few things. First, our key \hyperref[alg:psi_2]{$\psi^{(2)}$-Alg}, used to simulate the stable undershoot, uses a very different simulation method for $\alpha\les1/2$ and for $\alpha>1/2$. In particular, the multiplicative constants in the $\Oh$ notation can be significantly different. Second, the bounds in Theorem~\ref{thm:expectation_of_time_complexity} are only bounds, and need not be sharp. In particular, this does not necessarily mean that the asymptotic cost as $\alpha\uparrow1$ is truly smaller than the asymptotic cost as $\alpha\downarrow0$.

The behaviour of the expected running time of \hyperref[alg:improved_triple_temper_stable]{TSFFP-Alg},  as a function of $q>0$, is in good agreement with  our upper bound in Theorem~\ref{thm:tempered_expect_time}, see Figure~\ref{fig:tempered_stable_case} for details.

\begin{figure}[ht]
\centering
\includegraphics[width=.55\textwidth]{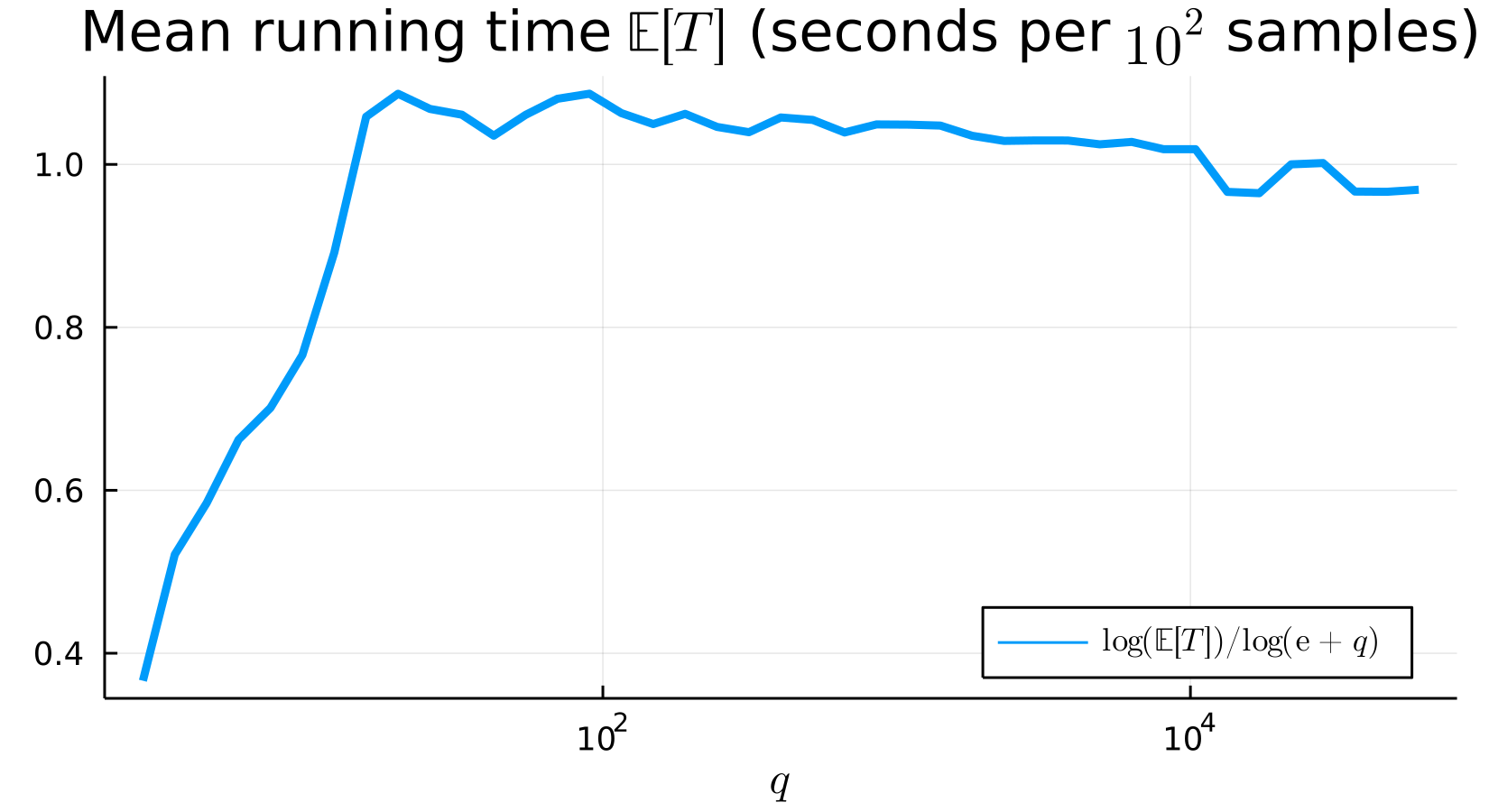}
\caption{Implementation of \hyperref[alg:improved_triple_temper_stable]{TSFFP-Alg}.\label{fig:tempered_stable_case}
The picture depicts an implementation of \hyperref[alg:improved_triple_temper_stable]{TSFFP-Alg} with parameters $\alpha=0.55$, $\theta=1$, $b(t)\equiv 1$ and $q\in\{\me^0, \me^{1/4},\ldots,\me^{11}\}$. The mean running time $\E[T]$ is measured in seconds taken for every $10^2$ samples. By Corollary~\ref{cor:improved_tempered_expected_time}, $\E[T]=\Oh(q)$ as $q\to\infty$. This is supported by the picture: the quotient $\log(\E [T])/\log(\me+q)$ appears to converge to $1$ as $q\to\infty$.}
\end{figure}

\subsection{Extensions to L\'evy processes of bounded variation}
\label{subsec:BV}

We start by recalling the algorithm in~\cite[\S5.1]{chi2016exact}.
Let $Z$ be a L\'evy process with non-positive drift, such that its L\'evy measure satisfies
\[
\nu(\RP)=\infty
\quad\text{and}\quad
\int_{\R}(|x|\land 1)\nu(\md x)<\infty.
\]
Decompose $Z$ as $Z^+-Z^-$, where $Z^+$ and $Z^-$ are independent subordinators with Levy measures $\nu^+$ and $\nu^-$, respectively, and where $Z^+$ is driftless. Fix some $c>0$ and let $\tau_c^Z:=\inf\{t>0:Z_t>c\}$. The following algorithm provides a method to simulate $\tau_c^Z$ under the assumption that the first-passage triplets $(\tau_x^+,Z_{\tau_x^+-},Z_{\tau_x^+})$, where $\tau_x^+=\inf\{t>0:Z_t^+>x\}$, as well as the increments $Z^+_t$ and $Z^-_t$ can be sampled for any $x>0$ and $t>0$. Further consider some $T\in(0,\infty]$. According to~\cite[Proposition~5.1]{chi2016exact}, if either $\limsup_{t\to\infty}Z_t=\infty$ a.s. (which can be determined, e.g., via Rogozin's criterion~\cite[Thm~2.7]{MR4448688}) or $T<\infty$, then \hyperref[alg:FPE_of_bounded_variation]{BVFP-Alg} below stops a.s. and it samples the triplet $(\min\{\tau_c^Z,T\},Z_{\min\{\tau_c^Z,T\}-},Z_{\min\{\tau_c^Z,T\}})$.

\begin{algorithm}
\caption{(BVFP-Alg)  Bounded Variation First-Passage Algorithm: for a bounded variation L\'evy process $Z$, samples $(\min\{\tau_c^Z,T\},Z_{\min\{\tau_c^Z,T\}-},Z_{\min\{\tau_c^Z,T\}})$ \label{alg:FPE_of_bounded_variation}}
\begin{algorithmic}[1]
\Require{Process $Z=Z^+-Z^-$ with non-positive drift, barrier level $c\in(0,\infty)$, time horizon $T\in(0,\infty]$, such that either $\limsup_{t\to\infty}Z_t=\infty$ a.s. or $T<\infty$.}
\State{Set $t\gets0$, $h\gets0$, $v\gets 0$ and $b\gets c$}
\Repeat
    \State{Sample $(s,u,v)\sim\mathcal{L}(\tau_b^+,Z^+_{\tau_b^+-},Z^+_{\tau_b^+})$ 
    and $w\sim\mathcal{L}(Z^-_s)$
    }
    \If{$s+t\ges T$}
        \State{Sample $u\sim\mathcal{L}(Z^+_{T-t})$ until $u<b$ and sample $w\sim\mathcal{L}(Z^-_{T-t})$}
        \State{\Return $(T,h+u-w,h+u-w)$}
    \Else
        \State{Set $t\gets t+s$, $h\gets h+v-w$, $b\gets b-v+w$}
    \EndIf
\Until{$b<0$}
\State{\Return $(t,h+u-v,h)$}
\end{algorithmic}
\end{algorithm}

\subsubsection{Pricing barrier options.}

Let $Z$ be a L\'evy process of bounded variation started from $0$ and let $R_t:=R_0\me^{Z_t}$, $t\ges0$, model the risky asset for some initial value $R_0>0$. The simulation of the first-passage event can be used to obtain Monte Carlo estimators of the price of a barrier option. Indeed, fix $K<M$ and consider the payoff of the up-and-out barrier call option with payoff 
\begin{align*}
g(T,R)
&=\max\{R_T-K,0\}\1\bigg\{\sup_{t\in[0,T]}R_t<M\bigg\}
=\max\{R_0\me^{Z_T}-K,0\}\1\bigg\{\sup_{t\in[0,T]}Z_t<\log(M/R_0)\bigg\}\\
&=\max\{R_0\me^{Z_T}-K,0\}\1\{\tau_{\log(M/R_0)}>T\}
=\max\{R_0\me^{Z_{\tau_*}}-K,0\}\1\{\tau_*=T\},
\end{align*}
where $\tau_*\coloneqq \min\{\tau_{\log(M/R_0)}^Z,T\}$ and $\tau_x^Z=\inf\{t>0:Z_t>x\}$. Then the price of the option is given by $G(T,R_0)\coloneqq\me^{-\delta T}\E[g(T,R)]$, where $\delta\in\R$ is the discount rate.

Let $(\tau_*^i,Z_{\tau_*^i}^i)$, $i\in\N$, be iid samples with the law of $(\tau_*,Z_{\tau_*})$ generated via \hyperref[alg:FPE_of_bounded_variation]{BVFP-Alg}. Then, for every $n$, we consider the Monte Carlo estimator
\[
G_n(T,R_0)\coloneqq
    \frac{1}{n}\sum_{k=1}^n 
        \max\{R_0\me^{Z_{\tau_*}^i}-K,0\}\1\{\tau_*^i=T\},
\]
of $G(T,R_0)$. Figure~\ref{fig:barrier_option} shows such an implementation for the case where $Z$ is the difference of two tempered stable subordinators. To obtain smoother graphs, we apply \hyperref[alg:FPE_of_bounded_variation]{BVFP-Alg} to successive values of the barrier level $\log(M/R_0)$ (i.e., by decreasing values of $R_0\in\{98,98.031,\ldots,101.999\}$), obtaining strongly correlated samples. This is a well-known procedure in Monte Carlo estimation that retains the accuracy of each estimate as if we had used independent samples for each value of $R_0$, but reduces the random fluctuations between any two estimated values, see~\cite[\S4.2]{MR1999614}.

\begin{figure}[ht]
\centering
\includegraphics[width=.6\textwidth]{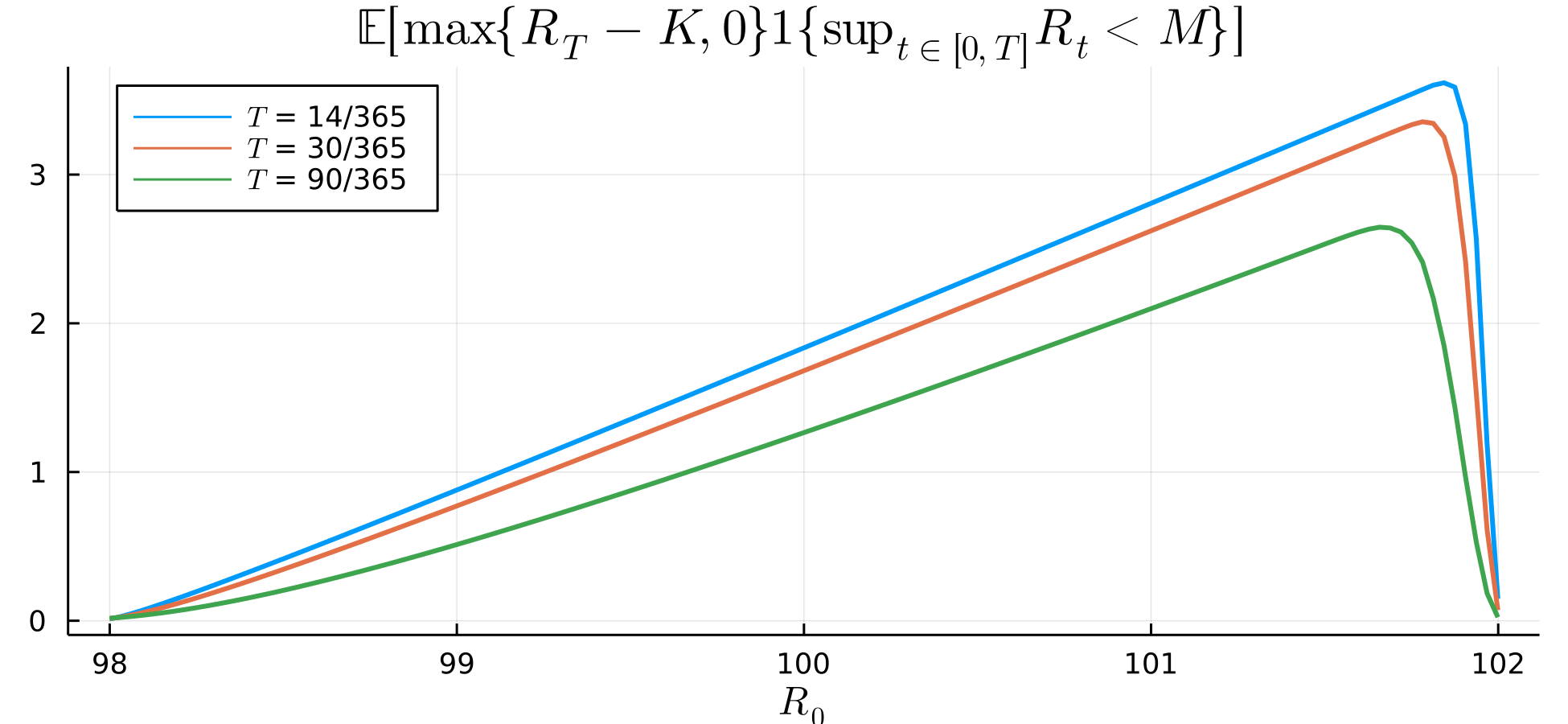}
\caption{Estimated values of $\me^{-\delta T}\E[\max\{R_T-K,0\}\1\{\sup_{t\in[0,T]}R_t<M\}]$ using $n=10^4$ samples.
\label{fig:barrier_option}
We considered the time horizons $T\in\{14/365, 30/365, 90/365\}$ and parameters $\delta=0$, $K=98$ and $M=102$. We assume $Z=Z^+-Z^-$ where $Z^+$ and $Z^-$ are both driftless tempered stable subordinators with parameters $(\alpha^+,\theta^+,q^+)=(0.66,0.1305,6.5022)$ and $(\alpha^-,\theta^-,q^-)=(0.66,0.0615,3.3088)$, respectively, as calibrated from the USD/JPY currency pair, see~\cite[Table 3]{MR3038608}.
}
\end{figure}

\subsection{Solution of fractional partial differential equations}

Let $g:\R_+\times \R^d\to  \R$ be a continuous function that vanishes at infinity and $\phi:\R^d\to\R$.
For $a<b$, consider the following fractional partial differential equation (FPDE):
\begin{equation}
\label{eq:fpde}
\begin{split}
    (-_tD_a+A_x)u(t,x)&=-g(t,x) \quad (t,x)\in(a,b]\times\mathbb R^d,\\
    u(a,x)&=\phi(x), \quad x\in \mathbb R^d,
\end{split}
\end{equation}
where $A_x$ is a generator of a Feller semigroup on $C_\infty(\mathbb R^d)$ acting on $x$, $\phi\in\text{Dom}(A_x)$, the operator $-_tD_a$ is a generalised differential operator of Caputo type of order less than $1$ acting on the time variable $t\in[a,b]$.
The solution $u$ of the FPDE problem in~\eqref{eq:fpde} exists and its stochastic representation is given by (see~\cite{hernandez2017generalised}):
\begin{equation*}
u(t,x)=\E\left[\phi(X_{T_t}^x)+\int_0^{T_t}g(Y_s^{a,t},X_s^x)\md s \right]
\end{equation*}
where $\{X_s^x\}_{s\ges0}$ is the stochastic process generated by $A_x$ and $T_t=\inf\{s>0,Y_s^{a,t}<a\}$ where $\{Y_s^{a,t}\}_{s\ges0}$ is the decreasing $[a,b]$-valued  process started at $t\in[a,b]$ generated by $-_tD_a$. 
Note that $T_t$
is the first-passage time of the subordinator 
$(t-Y_s^{a,t})_{s\in\R_+}$
over the constant level $t-a$.

In the special case $g\equiv 0$, $\phi(x)=x^2$, $A_x= (x^2/2)\Delta$ (where $\Delta$ is the Laplacian on $\R$) and the subordinator is tempered stable, we can simulate the Monte Carlo estimator 
\begin{equation}
\label{eq:mc_estimator}
    u_n(t,x)=\frac{1}{n}\sum_{k=1}^n\phi(X_{T_t^k}^{x,k})
\end{equation}
of the solution $u$ of the FPDE in~\eqref{eq:fpde}.
In particular, the variables
$T_t^k$ are an iid samples of $T_t$, obtained by \hyperref[alg:improved_triple_temper_stable]{TSFFP-Alg}, while 
$$X^{x,k}_{s}:=x\exp\left(\sqrt{s} N^k-s/2\right),$$ where 
$N^k$ are iid standard normal random variables.
Since $X^x$ is a geometric Brownian motion and \hyperref[alg:improved_triple_temper_stable]{TSFFP-Alg} is exact, the variables $\phi(X_{T_t^k}^{x,k})$ in the estimator in~\eqref{eq:mc_estimator} have the same law as $\phi(X^x_{T_t})$.
A plot of $u_n(t,x)$, based on $n=10^4$ samples, is presented in Figure~\ref{fig:fpde}.
In this case, the dependence of the estimator $u_n(t,x)$
for fixed $t$ (as a function of $x$) is explicit. The section of $u_n(t,x)$ as a function of $t$ (for fixed $x$)
is obtained by sampling consecutive crossing times for increasing barriers. This is a well-know procedure in Monte Carlo estimation that retains the accuracy of each estimate $u_n(t,x)$ but reduces random fluctuations between the values of $u_n$ at consecutive time points and a given level $x$
by introducing correlation~\cite[Sec.4.2]{MR1999614}.
The spacing between values of $t$ (resp. $x$) where $u_n(t,x)$ is evaluated in Figure~\ref{fig:fpde} is $1/20$ (resp. $1/100$).

\subsubsection{Comparison with a biased approximation.}
\label{subsec:naiveMC}

We stress that being able to simulate exactly from the law of $T_t$ is essential for the stability of  the estimator $u_n(t,x)$.
It is tempting to simulate a random walk approximation (i.e. the skeleton) of the subordinator 
$(t-Y_s^{a,t})_{s\in\R_+}$
on a time grid with mesh $h>0$ and approximate the law of $T_t$ with the first time 
$T_t^{(h)}$
the random walk crosses level $t-a$.
Unlike $u_n(t,x)$, the estimator 
\begin{equation*}
u_n^{(h)}(t,x)=\frac{1}{n}\sum_{k=1}^n\phi\Big(X_{T_t^{(h),k}}^{x,k}\Big),
\quad (t,x)\in\R_+^2,
\end{equation*}
is biased. Thus, in order for the $L^2$-error to be at most $\epsilon^2>0$, its bias and variance need to be of order $\epsilon$ and $\epsilon^2$, respectively. In the specific case considered here, it is easy to see that, for fixed $(t,x)$, the bias of $u_n^{(h)}(t,x)$
is proportional to $h$. In particular, the computational complexity of evaluating $\phi(X_{T_t^{(h),k}}^{x,k})$
is proportional to $1/h$, making the total complexity of 
$u_n^{(h)}(t,x)$ proportional 
to $n/h\propto\epsilon^{-3}$. In contrast, the computational complexity of evaluating $u_n(t,x)$ based on our exact simulation algorithm is proportional to $n\propto\epsilon^{-2}$. This yields an improvement in computational complexity proportional to $1/h\propto \epsilon^{-1}$.

The analysis outlined in the previous paragraph can be made precise for all Lipschitz functions $\phi$. We note that, if $\phi$ is only locally Lipschitz (as is the case in the example above), the error of the naive random walk algorithm may deteriorate significantly as a function of $(t,x)$. For instance, in the example above, the bias is proportional to $hx^2\E_q[\exp(\sqrt{T_t}N^1-T_t/2)]=hx^2\E_q[\exp(T_t)]$, where $\E_q[\exp(T_t)]\ges\E[\exp(T_t)]$ (because stable paths dominate tempered stable paths) and, by Proposition~\ref{prop:joint_law}(a) and~\eqref{eq:mellin_tranform},
\[
\E\big[\me^{T_t}\big]
=\E\big[\me^{t^\alpha S_1^{-\alpha}}\big]
=\sum_{k=0}^\infty
    \frac{t^{k\alpha}\E[S_1^{-k\alpha}]}{k!}
=\sum_{k=0}^\infty
    \frac{t^{k\alpha}}{\Gamma(1+k\alpha)}
\ges\frac{1}{\Gamma(1+\alpha)}+\sum_{k=1}^\infty
    \frac{t^{k\alpha}}{k!}
    =\me^{t^\alpha}-1+\frac{1}{\Gamma(1+\alpha)}.
\]
Thus, the bias of the naive approximation $u_n^{(h)}(t,x)$ is proportional to $hx^2\me^{t^\alpha}$. This would require $h$ to be proportional to $\epsilon x^{-2}\me^{-t^\alpha}$ if the $L^2$-error is to be smaller than $\epsilon^2$.
Thus, the improvement in computational complexity of $u_n(t,x)$ over $u_n^{(h)}(t,x)$ is proportional to $1/h\propto \epsilon^{-1}x^2\me^{t^\alpha}$.

\begin{figure}[ht]
\centering
\includegraphics[width=.49\textwidth]{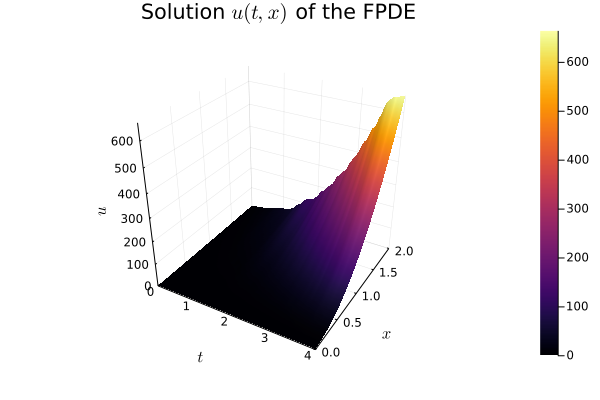}
\caption{Estimated values $u_n(t,x)$ of $u(t,x)$ using $n=10^4$ samples.\label{fig:fpde}
We chose $\phi(x)=x^2$. Note $T_t$ has the law of the first-passage time of a tempered stable subordinator with parameters $\alpha=0.4$, $\theta=1$, $q=1$ crossing level $b(t)\equiv 1$ and $X=(X_t)_{t\in\RP}$ is geometric Brownian motion $X_t=x\exp(B_t-t/2)$ with $X_0=x>0$.
}
\end{figure}

\section{Analyses of the algorithms}
\label{sec:proofs}

For any $\theta>0$ and $\alpha\in(0,1)$, let $S=(S_t)_{t\in\RP}$ (where $\RP:=[0,\infty)$) be the stable subordinator (for background on L\'evy, including stable, processes see  monograph~\cite{MR1406564}) with Laplace transform and L\'evy measure 
\begin{equation}
\label{eq:stable_Levy_measure}
\E[\me^{-uS_t}]=\exp(-t\theta u^\alpha), \quad\text{$u\in\RP$},\quad\&\quad  \nu(\md x):=\theta w_\alpha^{-1}  x^{-\alpha-1}\md x,\quad x\in(0,\infty),
\end{equation}
respectively, 
where $w_\alpha:=\Gamma(1-\alpha)/\alpha>0$ ($\Gamma$ denotes the Gamma function).
The constant factor $1/w_\alpha$
in the L\'evy measure $\nu(\md x)$ is chosen so that the L\'evy-Khinchin formula for the Laplace exponent yields the desired Laplace exponent $u\mapsto \theta u^\alpha$.
Let
$\overline\nu :(0,\infty)\to(0,\infty)$ be the tail function of the L\'evy measure $\nu$: $$\ov \nu(x):=\nu((x,\infty))=x^{-\alpha}\theta/\Gamma(1-\alpha),\qquad x\in(0,\infty).$$  

In particular, $S$ starts at zero, has zero drift and satisfies $S_t\eqd t^{1/\alpha}S_1$ for all $t\in(0,\infty)$.
Denote by $g_t:\RP\to\RP$ 
the density of $S_t$ (for $t>0$).
Introduce 
the functions 
\begin{equation}
\label{eq:zolotarev_density}
\rho(u)
\coloneqq
\frac{\sin(\alpha\pi u)^{\alpha}\sin((1-\alpha)\pi u)^{1-\alpha}}{\sin(\pi u)}
\quad\text{and}\quad
\sigma_\alpha(u)
\coloneqq 
\rho(u)^{r+1}=\rho(u)^{1/(1-\alpha)},\quad u\in(0,1),
\end{equation}
where $r:=\alpha/(1-\alpha)\in(0,\infty)$. Note $r/\alpha=r+1$ and hence $\sigma_\alpha(0+)\coloneqq\lim_{x\downarrow0}\sigma_\alpha(x)=(1-\alpha)\alpha^{\alpha/(1-\alpha)}$.  The density $g_t$ of $S_t$ can be expressed as follows~\cite[\S4.4]{uchaikin2011chance}: for any $x>0$,
\begin{equation}
\label{eq:zolotarev_representation}
    g_t(x)=\varphi_\alpha((\theta t)^{-1/\alpha}x)(\theta t)^{-1/\alpha},\quad\text{where}\quad
\varphi_\alpha(x)\coloneqq r
    \int_0^1 \sigma_\alpha(u)x^{-r-1}\me^{-\sigma_\alpha(u)x^{-r}}\md u.
\end{equation}
In particular, the $n$-th derivative $g_t^{(n)}$ exists on $(0,\infty)$ for any $n\in\N\cup\{0\}$ and satisfies $\lim_{x\downarrow0}g_t^{(n)}(x)=0$.

\subsection{Distribution of the first-passage event of a stable subordinator}
\label{subsec:joint_law_description}
The paths of $S$ are assumed to be right-continuous with left limits,
where for $t>0$ we define $S_{t-}:=\lim_{s\uparrow t}S_s$.
The jump at time $t$ is given by $\Delta_S(t):=S_t-S_{t-}$.
The first-passage event for $S$ over 
a non-increasing, absolutely continuous function 
$b:[0,\infty)\to[0,\infty)$, 
with $b(0)\in(0,\infty)$,
is described by the  random vector 
\begin{equation}
\label{eq:first_passage_verctor_stable}
\big(\tau_b,
    S_{\tau_b-},
    S_{\tau_b}\big),
\qquad
\text{where}\quad 
\tau_b\coloneqq\inf\{t>0\,:\,S_t>b(t)\}
\end{equation}
is the crossing time of $S$ over the function $b$ and $S_{\tau_b-}=\lim_{s\uparrow \tau_b}S_s$ is the left limit of $S$ at the crossing time of $b$. The size $\Delta_S(\tau_b):=S_{\tau_b}-S_{\tau_b-}$ of the jump of $S$ at the crossing time $\tau_b$ may be zero with positive probability (i.e. it does not jump at time $\tau_b$) if $b$ is not constant. In this case we say that the subordinator $S$ \textit{creeps} over $b$.

Theorem~\ref{thm:joint_law}, stated and proved in Appendix~\ref{app:proof_of_Thm_Joint_law} below for completeness but originally established in~\cite{chi2016exact} (see also~\cite{https://doi.org/10.48550/arxiv.2205.06865}), describes the law of the triplet $(\tau_b,S_{\tau_b-},\Delta_S(\tau_b))$ (and hence of~\eqref{eq:first_passage_verctor_stable}, since $S_{\tau_b}=S_{\tau_b-}+\Delta_S(\tau_b)$) for a general class of subordinators. In particular, as a consequence of~\eqref{subeq:joint_undershoot_law}, the stable subordinator $S$ does not jump onto or out of the function $b$ at $\tau_b$: 
\[
\p(S_{\tau_b-}<b(\tau_b), S_{\tau_b}=b(\tau_b))
=0\qquad\&\qquad \p(S_{\tau_b-}=b(\tau_b), S_{\tau_b}>b(\tau_b))=0.
\]

Note that, since $b$ is monotone and absolutely continuous, it is differentiable Lebesgue-a.e. on $(0,\infty)$ and its derivative is a version of its density. We denote its derivative by $b'$ and, on the Lebesgue zero measure set where it is not defined, we set it equal to $-1$ (arbitrary choice, without ramifications). The next proposition is key for the sampling of $(\tau_b,S_{\tau_b-},S_{\tau_b})$.

\begin{prop}\label{prop:joint_law}
Let $S$ be a stable subordinator and $b:\RP\to\RP$ a non-increasing absolutely continuous function with $b(0)>0$. Define $T_b\coloneqq\inf\{t>0:b(t)=0\}\in(0,\infty]$ (with convention $\inf\emptyset :=\infty$). The following statements hold.
\begin{enumerate}
\item[{\normalfont(a)}]
 $\tau_b\overset{d}=B^{-1}(S_1)$, where $B(t):=t^{-1/\alpha}b(t)$ is a strictly decreasing continuous function $B:(0,T_b)\to(0,\infty)$, with inverse $B^{-1}:(0,\infty)\to(0,T_b)$. Hence, $\p(0<\tau_b<T_b)=1$.
\item[{\normalfont(b)}]The probability of $S$ creeping at $b$, conditional on at time $\tau_b=t\in(0,T_b)$, equals 
$$
\p[S_{\tau_b}=b(\tau_b)|\tau_b=t]=\p[\Delta_S(\tau_b)=0|\tau_b=t]=-b'(t)/(-b'(t)+\alpha^{-1}t^{-1}b(t)).
$$
\item[{\normalfont(c)}] Conditional on  $S$  not creeping 
at the crossing time $\tau_b=t\in(0,T_b)$, we have
\begin{align}
    \label{eq:undershoot_law}
        &\p[S_{\tau_b-}\in\md u|\Delta_S(\tau_b)>0 ,\tau_b=t] =\frac{\theta t}{w_\alpha b(t)g_t(b(t))}(b(t)-u)^{-\alpha}g_t(u)\md u,\quad u\in(0,b(t));\\
\label{eq:creep_law}
    &\p[\Delta_S(\tau_b)\leq v| \Delta_S(\tau_b)>0, \tau_b=t, S_{\tau_b-}=u] =1-\left((b(t)-u)/ v\right)^{\alpha},\quad v\in(b(t)-u,\infty).
\end{align}
\end{enumerate}
\end{prop}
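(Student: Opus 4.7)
My plan is to derive (a) directly from the monotonicity of $S-b$ and the self-similarity of $S$, and to obtain (b) and (c) by specialising Theorem~\ref{thm:joint_law} (from Appendix~\ref{app:proof_of_Thm_Joint_law}), which provides the general form of the joint law of the first-passage triplet and whose proof is independent of (b)--(c).

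For (a), the crucial observation is that $S$ is non-decreasing and $b$ is non-increasing, so $t\mapsto S_t-b(t)$ is non-decreasing and right-continuous. Hence, up to a $\p$-null set, $\{\tau_b\leq t\}=\{S_t>b(t)\}$ for every $t>0$. Self-similarity $S_t\eqd t^{1/\alpha}S_1$ then gives $\p(\tau_b\leq t)=\p(S_1>B(t))$. Since $b$ is continuous with $b(0)>0$ and $t\mapsto t^{-1/\alpha}$ is continuous and strictly decreasing on $(0,\infty)$, the function $B$ is continuous and strictly decreasing on $(0,T_b)$ with $B(0+)=\infty$ and $B(T_b-)=0$; its inverse $B^{-1}$ is therefore well-defined and satisfies $\tau_b\eqd B^{-1}(S_1)$, proving (a).

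For (b) and (c), I will use Theorem~\ref{thm:joint_law} to decompose the joint law of $(\tau_b,S_{\tau_b-},\Delta_S(\tau_b))$ into: (i) a jump component with density $g_t(u)\,\nu(dv)\,du\,dt$ on $\{0<u<b(t),\,v>b(t)-u\}$, obtained via the compensation formula applied to the predictable indicator $\{S_{t-}<b(t)<S_{t-}+v\}$ on $\{\tau_b=t\}$, and (ii) a creeping component contributing $-b'(t)g_t(b(t))\,dt$ concentrated on $\{S_{\tau_b}=b(t)\}$. Differentiating the tail from (a) yields the total density
\[
f_{\tau_b}(t)=g_t(b(t))\bigl(\alpha^{-1}t^{-1}b(t)-b'(t)\bigr),
\]
and dividing the creeping mass by $f_{\tau_b}(t)\,dt$ produces the formula in (b). For (c), integrating the jump density over $v>b(t)-u$ and applying $\ov\nu(x)=\theta x^{-\alpha}/\Gamma(1-\alpha)$ with $\Gamma(1-\alpha)=\alpha w_\alpha$ identifies the normalising constant $\theta t/(w_\alpha b(t)g_t(b(t)))$ in~\eqref{eq:undershoot_law} (the corresponding marginal mass $\alpha^{-1}t^{-1}b(t)g_t(b(t))\,dt$ agrees, as it must, with $f_{\tau_b}(t)\,dt$ minus the creeping mass). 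The overshoot distribution~\eqref{eq:creep_law} then follows at once from the Pareto form of $\nu$ restricted to $(b(t)-u,\infty)$.

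The main obstacle is correctly identifying the creeping contribution $-b'(t)g_t(b(t))\,dt$ in Theorem~\ref{thm:joint_law}: intuitively, it arises because the non-increasing boundary $b$ contributes an effective continuous upward drift $-b'(t)\geq 0$ to $S-b$, enabling continuous crossings of zero even though $S$ itself has no drift. Once this creeping term is established, all the stable-specific identifications in (b) and (c) reduce to the elementary computations sketched above.
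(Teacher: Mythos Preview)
Your proposal is correct and follows essentially the same route as the paper: part (a) via self-similarity and monotonicity of $S-b$, the density $f_{\tau_b}(t)=g_t(b(t))(\alpha^{-1}t^{-1}b(t)-b'(t))$ obtained by differentiating $\p[\tau_b\le t]=\p[S_1\ge B(t)]$, and parts (b)--(c) by specialising the jump/creep decomposition of Theorem~\ref{thm:joint_law} together with the explicit tail $\ov\nu(x)=\theta x^{-\alpha}/\Gamma(1-\alpha)$. The only cosmetic difference is that the paper identifies the normalising constant in~\eqref{eq:undershoot_law} by writing $K(t)$ as the sum of the creeping and non-creeping masses, whereas you compute the non-creeping mass directly; both are the same computation.
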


\begin{remark}
\begin{enumerate}
\item[(i)] The representation of the law of $\tau_b$ in Proposition~\ref{prop:joint_law}(a) is based on the scaling property of the stable subordinator $S$. In particular, Proposition~\ref{prop:joint_law}(a) implies that, at the crossing time $\tau_b$ of a stable subordinator  over a boundary function $b$, the law of the variable $\tau_b^{-1/\alpha}b(\tau_b)$ is stable for \text{any} non-increasing absolutely continuous function $b$.
\item[(ii)] The expression of the probability of creeping, conditional on $\tau_b=t$, in Proposition~\ref{prop:joint_law}(b) does not depend on the density of $S_t$. This makes the event of creeping easy to sample, see \hyperref[alg:triple_stable_conditional_on_time]{SFP-Alg} below.
\item[(iii)] Conditional on not creeping, the jump $\Delta_S(\tau_b)$ is easy to sample by inverting its distribution function in~\eqref{eq:creep_law}. In contrast, sampling the undershoot from the law in~\eqref{eq:undershoot_law} in finite expected running time is delicate, see \hyperref[alg:undershoot_stable]{SU-Alg} below.
\end{enumerate}
\end{remark}

\begin{proof}[Proof of Proposition~\ref{prop:joint_law}]
(a) By the scaling property of $S_t\eqd t^{1/\alpha} S_1$, note $\p[\tau_b\les t]
=\p[S_t\ges b(t)]
=\p[S_1\ges B(t)]
=\p[B^{-1}(S_1)\les t]$
for any $t>0$.\\
(b) We claim that $\tau_b$ is absolutely continuous with density given by the function $K:(0,T_b)\to (0,\infty)$, 
\begin{equation}\label{eq:density_tau}
K(t)
= g_t(b(t))\left(\alpha^{-1}t^{-1}b(t)-b'(t)\right)\quad\text{ for all $t\in(0,T_b)$.}
\end{equation}
In fact, $\p[\tau_b\les t]
=\p[B^{-1}(S_1)\les t]=\p[S_1\ges B(t)]=\int_{B(t)}^\infty g_1(s)\md s$, which is absolutely continuous since $b$ (and hence $B$) is absolutely continuous. Differentiating this identity in $t$ and applying the scaling property to the density $g_t(s)=t^{-1/\alpha}g_1(t^{-1/\alpha}s)$ yields
\[
    \begin{aligned}
    K(t)&:=\frac{\md}{\md t}\p[\tau_b\les t]=\frac{\md}{\md t}\p[B^{-1}(S_1)\les t]=-B(t)'g_1(B(t))\\&=g_1(t^{-1/\alpha}b(t))(t^{-1/\alpha-1}b(t)/\alpha-t^{-1/\alpha}b'(t))
    =g_t(b(t))\left(\alpha^{-1}t^{-1}b(t)-b'(t)\right).
    \end{aligned}
\]
Since $\p[S_{\tau_b}=b(\tau_b)|\tau_b=t]=
\p[\Delta_S(\tau_b)=0|\tau_b=t]=
(1/K(t))\frac{\md}{\md t}\p[\Delta_S(\tau_b)=0, \tau_b\leq t]$,
the representation in~\eqref{eq:density_tau} and~\eqref{subeq:creep_b} imply $\p[S_{\tau_b}=b(\tau_b)|\tau_b=t]=-b'(t)/(-b'(t)+\alpha^{-1}t^{-1}b(t))$.\\
(c) By~\eqref{subeq:joint_undershoot_law} in Theorem~\ref{thm:joint_law}, for $t\in(0,T_b)$ and $u\in(0,b(t))$, we have 
$$
\p[\tau_b\in\md t, S_{\tau_b-}\in \md u, \Delta_S(\tau_b)>0]=\1\{0<b(t)-u\} g_t(u) \ov\nu(b(t)-u)\md t \, \md u.
$$
Thus, 
$\p[\tau_b\in\md t,\Delta_s(\tau_b)>0]=\int_0^{b(t)}\ov\nu(b(t)-u)g_t(u)\md u \md t$ and
\[
\p[S_{\tau_b-}\in\md u|\Delta_s(\tau_b)>0,\tau_b=t]=\1\{0<b(t)-u\}
\frac{ g_t(u) \ov\nu(b(t)-u)}{\int_0^{b(t)} g_t(u')\ov\nu(b(t)-u')\md u'} \md u.
\]
To evaluate the denominator, let us recall that $K(t)$ is the density of $\tau_b$. Together with~\eqref{subeq:creep_b} we have
\begin{equation*}
K(t)=\p[\tau_b\in\md t,\Delta_s(\tau_b)=0]+
\p[\tau_b\in\md t,\Delta_s(\tau_b)>0]
=-b'(t)g_t(b(t))+\int_0^{b(t)} g_t(u')\ov\nu(b(t)-u')\md u'.
\end{equation*}
Combining~\eqref{eq:density_tau} we get 
$ \int_0^{b(t)} g_t(u')\ov\nu(b(t)-u')\md u'=g_t(b(t))\alpha^{-1}t^{-1}b(t)$
and~\eqref{eq:undershoot_law} follows. Also from~\eqref{subeq:joint_undershoot_law} we know that
\begin{align*}
    \p[\Delta_S(\tau_b)\in\md v| \Delta_S(\tau_b)>0, \tau_b=t, S_{\tau_b-}=u]&=\frac{\p[S_{\tau_b-}\in\md u, \Delta_S(\tau_b)\in\md v| \Delta_S(\tau_b)>0, \tau_b=t]}{\p[S_{\tau_b-}\in\md u| \Delta_S(\tau_b)>0, \tau_b=t]}\\
    &=\alpha(b(t)-u)^\alpha v^{-\alpha-1}\md v\quad\text{ for } v\in(b(t)-u,\infty)
\end{align*}
and~\eqref{eq:creep_law} follows.
\end{proof}

\subsection{Simulation of the first-passage event of a stable process}
\label{subsec:triple_stable}

The task is to justify the validity of \hyperref[alg:triple_stable_conditional_on_time]{SFP-Alg}, which samples the triplet $(\tau_b,S_{\tau_b-},S_{\tau_b})$ of the stable subordinator $S$ under $\p$. The analysis of its expected computational complexity has some technical prerequisites and will thus be deferred to Subsection~\ref{subsubsec:proof_of_main_thm_stable}. 

By Proposition~\ref{prop:joint_law}(a), $\tau_b$ has the same law as $B^{-1}(S_1)$ and can be sampled exactly in constant time (i.e. \textit{not} requiring a random number of steps) via the widely used \hyperref[alg:stable]{S-Alg} given in Appendix~\ref{app:temp_stable_marginal} below. Moreover, by Proposition~\ref{prop:joint_law}(b), the probability of the stable subordinator $S$ creeping over the function $b$ (i.e. $\Delta_S(\tau_b)=0$), given $\tau_b=t$, equals $-b'(t)/\big(-b'(t)+\alpha^{-1}t^{-1}b(t)\big)$. \hyperref[alg:triple_stable_conditional_on_time]{SFP-Alg} samples a Bernoulli variable taking the value $1$ with probability $-b'(t)/\big(-b'(t)+\alpha^{-1}t^{-1}b(t)\big)$. If the Bernoulli equals $1$, then, since the probability of $S$ jumping onto the function $b$ is zero $\p(S_{\tau_b-}<b(\tau_b), S_{\tau_b}=b(\tau_b))=0$, \hyperref[alg:triple_stable_conditional_on_time]{SFP-Alg} returns $(\tau_b,S_{\tau_b-},S_{\tau_b})=\big(t,b(t),b(t)\big)$. If, conditional on $\tau_b=t$, creeping does not occur, \hyperref[alg:triple_stable_conditional_on_time]{SFP-Alg} samples the undershoot $S_{\tau_b-}$ from the law $\p[S_{\tau_b-}\in\md u|\Delta_S(\tau_b)>0,\tau_b=t]$, given in~\eqref{eq:undershoot_law} of Proposition~\ref{prop:joint_law}(c), using \hyperref[alg:undershoot_stable]{SU-Alg} given below. Then, conditional on $S_{\tau_b-}=u$, the jump size $\Delta_S(t)$ has  by~\eqref{eq:creep_law} the same law as $(b(t)-u)U^{-1/\alpha}$, where $U$ is an independent uniform on $(0,1)$. Thus, \hyperref[alg:undershoot_stable]{SU-Alg} returns $(\tau_b,S_{\tau_b-},S_{\tau_b})=\big(t,u,u+(b(t)-u)U^{-1/\alpha}\big)$.


By the previous paragraph, for \hyperref[alg:triple_stable_conditional_on_time]{SFP-Alg} to be valid, it suffices to justify that \hyperref[alg:undershoot_stable]{SU-Alg} produces samples from the law $\rmS\Unif_\alpha(t,w)$ of $S_{\tau_b-}|\{\tau_b=t, b(\tau_b)=w, \Delta_S(\tau_b)>0\}$, which is described analytically in~\eqref{eq:undershoot_law} of Proposition~\ref{prop:joint_law}(c). Note that the simulation from the law $\rmS\Unif_\alpha(t,w)$ via \hyperref[alg:undershoot_stable]{SU-Alg} is the only step in \hyperref[alg:triple_stable_conditional_on_time]{SFP-Alg} that has random running time. In fact, the design of \hyperref[alg:undershoot_stable]{SU-Alg} is one of the main contributions of this paper and will be discussed in detail in the following subsection.

\subsection{Simulation from the law of the stable undershoot 
\texorpdfstring{$S_{\tau_b-}|\{\tau_b=t, b(\tau_b)=w, \Delta_S(\tau_b)>0\}$}{undershoot}}
\label{subsec:Undershoot_simulation}

We now describe and justify \hyperref[alg:undershoot_stable]{SU-Alg} and analyse its computational complexity. Recall from Subsection~\ref{subsec:joint_law_description} above that the function $b:[0,\infty)\to[0,\infty)$ is absolutely continuous and non-decreasing and hence almost everywhere differentiable with derivative $b'$ almost everywhere equal to the density of $b$ and set to be equal to $-1$ at the points of non-differentiability of $b$. Further recall that $r=\alpha/(1-\alpha)$, $w_\alpha=\Gamma(1-\alpha)/\alpha>0$ and the definition $T_b>0$, as well as that of the functions $\sigma_\alpha$ and $\varphi_\alpha$ defined in~\eqref{eq:zolotarev_density} and~\eqref{eq:zolotarev_representation}, respectively. 

By~\eqref{eq:undershoot_law} in Proposition~\ref{prop:joint_law}(c) and the representation of the stable density $g_t(x)$ in~\eqref{eq:zolotarev_representation}, the density of $\rmS\Unif_{\alpha}(t,w)$ is given by 
\[
f_{t,w}(x)
\coloneqq \frac{1}{w_\alpha w\varphi_\alpha((\theta t)^{-1/\alpha}w)}
\varphi_\alpha((\theta t)^{-1/\alpha}x)((\theta t)^{-1/\alpha}w-(\theta t)^{-1/\alpha}x)^{-\alpha}, \qquad x\in(0,w),
\] 
while the density of  $\xi\sim\rmS\Unif_\alpha(1/\theta,(\theta t)^{-1/\alpha}w)$ equals
\[
    f_{1/\theta,(\theta t)^{-1/\alpha}w}(x)= \frac{1}{w_\alpha w\varphi_\alpha((\theta t)^{-1/\alpha}w)}(\theta t)^{1/\alpha}\varphi_\alpha(x)((\theta t)^{-1/\alpha}w-x)^{-\alpha} , \qquad x\in(0,(\theta t)^{-1/\alpha}w).
\]
Clearly, 
$(\theta t)^{-1/\alpha}f_{1/\theta,(\theta t)^{-1/\alpha}w}((\theta t)^{-1/\alpha}x)=f_{t,w}(x)$ for $x\in(0,w)$. Thus, 
$(\theta t)^{1/\alpha}\xi\sim \rmS\Unif_{\alpha}(t,w)$. It suffices to sample $\xi$ from the law $\rmS\Unif_\alpha(1/\theta,s)$ with density 
$x\mapsto \varphi_\alpha(x)(s-x)^{-\alpha}/(w_\alpha s\varphi_\alpha(s))$, $x\in(0,s)$ and $s=(\theta t)^{-1/\alpha}w$.
Sampling efficiently from a density on $(0,s)$, proportional to
$x\mapsto \varphi_\alpha(x)(s-x)^{-\alpha}$, is hard to do directly, see Section~\ref{subsec:How_not_to_sample}. We use the integral representation of $\varphi_\alpha(x)$ in~\eqref{eq:zolotarev_representation} to extend the state space: consider $(\xi,Y_s)$ with density
\[
    (x,y)\mapsto (s-x)^{-\alpha}\sigma_\alpha(y)rx^{-r-1}\me^{-\sigma_\alpha(y)x^{-r}}/(w_\alpha s\varphi_\alpha(s)),\quad (x,y)\in(0,s)\times(0,1).
\]
Define a random variable $\zeta_s:=\xi^{-r}$. Note that $\zeta_s^{-1/r}\sim\rmS\Unif_\alpha(1/\theta,s)$,
and hence $(\theta t)^{1/\alpha}\zeta_s^{-1/r}\sim\rmS\Unif_\alpha(t,w)$.
Moreover, 
the law of the random vector $(\zeta_s,Y_s)$
is given by the density
 \begin{equation}\label{eq:scaled_law}
     \psi_s(x,y):= (s-x^{-1/r})^{-\alpha}\sigma_\alpha(y)\me^{-\sigma_\alpha(y)x}/(w_\alpha s\varphi_\alpha(s)),\quad (x,y)\in(s^{-r},\infty)\times(0,1).
 \end{equation}
 Fast exact simulation from the law given by the density $\psi_s$ is non-trivial but possible, see \hyperref[alg:undershoot_stable]{SU-Alg}.
 The first step, given in Proposition~\ref{prop:undershoot}, is to dominate the density $\psi_s$ by a mixture of densities that can both be simulated using rejection sampling.

\begin{prop}\label{prop:undershoot}
Fix any $s>0$. Let $\wt\psi_s^{(1)}$ (resp. $\wt\psi_s^{(2)}$) be the density on $(s^{-r},\infty)\times(0,1)$ proportional to  $(x,y)\mapsto\sigma_\alpha(y)\me^{-\sigma_\alpha(y)x}$ (resp. $(x,y)\mapsto\sigma_\alpha(y)\me^{-\sigma_\alpha(y)x}(x-s^{-r})^{-\alpha}$). Define the mixture density 
\[
\wt\psi_s:=p\wt\psi_s^{(1)}+(1-p)\wt\psi_s^{(2)},
\]
where
$p:=p'/(p'+1)\in(0,1)$ and 
\begin{equation}
\label{eq:psi_p'}
p':=(2-2^{\alpha})^{-\alpha}r^{-\alpha}s^{\alpha r}
\frac{\int_0^1 \exp(-\sigma_\alpha(y)s^{-r})\md y}{\Gamma(1-\alpha)\int_0^1\sigma_\alpha(y)^\alpha\exp(-\sigma_\alpha(y)s^{-r})\md y}.
\end{equation}
Then there exists a constant 
$c_\psi>0$ 
such that, for all $(x,y)\in(s^{-r},\infty)\times (0,1)$, we have
\begin{equation}\label{eq:inequality_psi_tilde}
\frac{1-\alpha}{2}
\les \frac{c_\psi\psi_s(x,y)}{\wt\psi_s(x,y)}
=\frac{(s-x^{-1/r})^{-\alpha}}{(1-2^{\alpha-1})^{-\alpha} s^{-\alpha}
    +(2r)^{\alpha}s^{-r}(x-s^{-r})^{-\alpha}}
\les 1. 
\end{equation}
\end{prop}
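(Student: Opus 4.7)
My approach splits the proof into two parts: verifying the middle equality in~\eqref{eq:inequality_psi_tilde} (which encodes the choice of $p$), and then proving the two pointwise inequalities after reducing them to an $s$-free statement via a substitution.

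The normalizing constants of $\widetilde\psi_s^{(1)}$ and $\widetilde\psi_s^{(2)}$ arise from the inner $x$-integration (exponential and gamma-type, respectively):
\[
I_1 := \int_0^1 \me^{-\sigma_\alpha(y)s^{-r}}\,\md y,\qquad I_2 := \Gamma(1-\alpha)\int_0^1 \sigma_\alpha(y)^\alpha \me^{-\sigma_\alpha(y)s^{-r}}\,\md y.
\]
Hence $\widetilde\psi_s(x,y)=\sigma_\alpha(y)\me^{-\sigma_\alpha(y)x}[p/I_1+(1-p)(x-s^{-r})^{-\alpha}/I_2]$. The explicit choice of $p'$ in~\eqref{eq:psi_p'}, together with the identities $(2-2^\alpha)^{-\alpha}=2^{-\alpha}(1-2^{\alpha-1})^{-\alpha}$ and $r(1-\alpha)=\alpha$, gives $p'I_2/I_1=(1-2^{\alpha-1})^{-\alpha}s^{-\alpha}/((2r)^\alpha s^{-r})$. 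Pulling out the factor $(2r)^{-\alpha}s^r/((p'+1)I_2)$ rewrites the bracket as $(1-2^{\alpha-1})^{-\alpha}s^{-\alpha}+(2r)^\alpha s^{-r}(x-s^{-r})^{-\alpha}$. Cancelling the common factor $\sigma_\alpha(y)\me^{-\sigma_\alpha(y)x}$ in $\psi_s/\widetilde\psi_s$ then yields the middle equality of~\eqref{eq:inequality_psi_tilde} with an explicit $c_\psi>0$ depending on $s$.

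For the pointwise bounds, the substitution $z:=(sx^{1/r})^{-1}\in(0,1)$ eliminates $s$-dependence (using $-r(1-\alpha)=-\alpha$) and reduces the middle expression to
\[
F(z) := \frac{(1-z)^{-\alpha}}{(1-2^{\alpha-1})^{-\alpha}+(2r)^\alpha(z^{-r}-1)^{-\alpha}},\qquad z\in(0,1).
\]
For the upper bound $F(z)\le 1$, I would split at $z=2^{\alpha-1}$. When $z\le 2^{\alpha-1}$, $(1-z)^{-\alpha}\le (1-2^{\alpha-1})^{-\alpha}$ is dominated by the first summand. When $z>2^{\alpha-1}$, the mean value theorem applied to $x\mapsto x^{-r}$ gives $z^{-r}-1=r\xi^{-r-1}(1-z)$ for some $\xi\in(z,1)$; the key identity $r+1=1/(1-\alpha)$ then yields $\xi^{r+1}>z^{r+1}>2^{(\alpha-1)/(1-\alpha)}=1/2$, so $z^{-r}-1\le 2r(1-z)$ and hence $(1-z)^{-\alpha}\le (2r)^\alpha(z^{-r}-1)^{-\alpha}$, dominated by the second summand.

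For the lower bound $F(z)\ge (1-\alpha)/2$, I rewrite it as $(1-2^{\alpha-1})^{-\alpha}(1-z)^\alpha+(2r(1-z)/(z^{-r}-1))^\alpha\le 2/(1-\alpha)$. The same MVT argument gives $(1-z)/(z^{-r}-1)=\xi^{r+1}/r\le 1/r$, so the second summand is bounded by $2^\alpha$; combined with $(1-z)^\alpha\le 1$, it suffices to prove the analytic inequality
\[
(1-\alpha)\bigl[(1-2^{\alpha-1})^{-\alpha}+2^\alpha\bigr]\le 2,\qquad\alpha\in(0,1).
\]
This is the main obstacle: the bound is tight at $\alpha=0$ (both sides equal $2$), so no crude estimate suffices. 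I would deduce it from the chord bound $2^\alpha\le 1+\alpha$ on $[0,1]$ (equivalently $1-2^{\alpha-1}\ge (1-\alpha)/2$), which yields $(1-\alpha)(1-2^{\alpha-1})^{-\alpha}\le 2^\alpha(1-\alpha)^{1-\alpha}$ and reduces the problem to $\beta^\beta+\beta\le 2^\beta$ for $\beta:=1-\alpha\in[0,1]$; this final inequality vanishes at both endpoints and is verified by checking that $2^\beta-\beta^\beta-\beta$ is concave on $(0,1)$ via direct computation of its second derivative.
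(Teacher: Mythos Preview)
Your proof is correct and follows essentially the same route as the paper: the same identification of $c_\psi$, the same split point for the upper bound (your $z=2^{\alpha-1}$ is exactly the paper's $x=2^\alpha s^{-r}$ after your substitution), and the same reduction of the lower bound to the elementary inequality $(1-\alpha)\big[(1-2^{\alpha-1})^{-\alpha}+2^\alpha\big]\le 2$. Your change of variable $z=(sx^{1/r})^{-1}$ is a clean presentational device (the paper argues directly in $x$ via monotonicity of $x\mapsto(1-\alpha)s^{r+1}x+\alpha x^{-1/r}$, which is the same content as your MVT step), and you go further than the paper by actually proving the final inequality, which the paper merely states.
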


\begin{remark}
The numerical value of the constant $c_\psi$ in Proposition~\ref{prop:undershoot} is not required in \hyperref[alg:undershoot_stable]{SU-Alg}, which samples from the density $\psi_s$ in~\eqref{eq:scaled_law}.
The inequalities in~\eqref{eq:inequality_psi_tilde}
enable us to apply rejection sampling to a proposal from $\wt\psi_s$ to obtain an exact sample from the law given by the density $\psi_s$. The first inequality in~\eqref{eq:inequality_psi_tilde} explicitly states a decay of the acceptance probability as $\alpha\uparrow1$. Sampling from $\wt\psi_s$ requires a mixture of samples from $\wt\psi_s^{(1)}$ and $\wt\psi_s^{(2)}$. The respective algorithms are constructed and analysed in Subsections~\ref{subsec:psi_(1)-2} and~\ref{subsec:psi_(2)-2} below, respectively.
\end{remark}

\begin{proof}[Proof of Proposition~\ref{prop:undershoot}]
Denote the total masses of the positive functions 
$(x,y)\mapsto \sigma_\alpha(y)\me^{-\sigma_\alpha(y)x}$ and $(x,y)\mapsto\sigma_\alpha(y)\me^{-\sigma_\alpha(y)x}(x-s^{-r})^{-\alpha}$ by
\begin{align*}
    M_1:=&\int_0^1\int_{s^{-r}}^\infty \sigma_\alpha(y)\me^{-\sigma_\alpha(y)x}\md x\md y=\int_0^1 \exp(-\sigma_\alpha(y)s^{-r})\md y\qquad\text{ and}\\
    M_2:=&\int_0^1\int_{s^{-r}}^\infty \sigma_\alpha(y)\me^{-\sigma_\alpha(y)x}(x-s^{-r})^{-\alpha}\md x\md y=\Gamma(1-\alpha)\int_0^1\sigma_\alpha(y)^\alpha\exp(-\sigma_\alpha(y)s^{-r})\md y.
\end{align*} 
Then, $\wt\psi_s^{(1)}(x,y)=\sigma_\alpha(y)\me^{-\sigma_\alpha(y)x}/M_1$ and $\wt\psi_s^{(2)}(x,y)=\sigma_\alpha(y)\me^{-\sigma_\alpha(y)x}(x-s^{-r})^{-\alpha}/M_2$.
Moreover,
\[
p'
=\frac{(1-2^{\alpha-1})^{-\alpha} s^{-\alpha}M_1}{(2r)^\alpha s^{-r}M_2}.
\]
Note that $p'=p/(1-p)$, making  
$\wt\psi_s$ proportional to 
\begin{align*}
 (2r)^\alpha s^{-r}M_2(p'\wt\psi_s^{(1)}(x,y)+\wt\psi_s^{(2)}(x,y)) & =  (1-2^{\alpha-1})^{-\alpha} s^{-\alpha}M_1\wt\psi_s^{(1)}(x,y)+(2r)^\alpha s^{-r}M_2\wt\psi_s^{(2)}(x,y)\\
    &= \sigma_\alpha(y)\me^{-\sigma_\alpha(y)x}
    \big((1-2^{\alpha-1})^{-\alpha} s^{-\alpha}
    +(2r)^{\alpha}s^{-r}(x-s^{-r})^{-\alpha}\big).
\end{align*}
Let $c_\psi>0$ be the normalizing constant, such that for $(x,y)\in(s^{-r},\infty)\times(0,1)$,
\begin{equation}\label{eq:c_psi_def}
    \wt\psi_s(x,y)=
\frac{c_\psi}{w_\alpha s\varphi_\alpha(s)}
    \sigma_\alpha(y)\me^{-\sigma_\alpha(y)x}
    \big((1-2^{\alpha-1})^{-\alpha} s^{-\alpha}
    +(2r)^{\alpha}s^{-r}(x-s^{-r})^{-\alpha}\big).
\end{equation}
Thus $c_\psi\psi_s(x,y)/\wt\psi(x,y)=(s-x^{-1/r})^{-\alpha}/\big((1-2^{\alpha-1})^{-\alpha} s^{-\alpha}
    +(2r)^{\alpha}s^{-r}(x-s^{-r})^{-\alpha}\big)$
    in~\eqref{eq:inequality_psi_tilde} follows.
    
We now prove the inequalities in~\eqref{eq:inequality_psi_tilde}.
Since $x\mapsto(1-\alpha) s^{r+1}x + \alpha x^{-1/r}$ is increasing on $(s^{-r},\infty)$, we have $(1-\alpha) s^{r+1}x + \alpha x^{-1/r}\ges s$,  equivalently  $r^\alpha s^{-r}(x-s^{-r})^{-\alpha}\les (s-x^{-1/r})^{-\alpha}$, for all $x>s^{-r}$. This inequality and a direct calculation imply 
$((1-2^{\alpha-1})^{-\alpha}+2^\alpha)^{-1}\les c_\psi\psi_s(x,y)/\wt\psi_s(x,y)$ for all $(x,y)\in(s^{-r},\infty)\times(0,1)$.
Since $(1-\alpha)/2\leq ((1-2^{\alpha-1})^{-\alpha}+2^\alpha)^{-1}$ ,
the inequality on the left side of~\eqref{eq:inequality_psi_tilde} follows.

Since $(1-2^{\alpha-1})^{-\alpha} s^{-\alpha}\ges (s-x^{-1/r})^{-\alpha}$ for $x\ges 2^\alpha s^{-r}$
(note that $x\mapsto (s-x^{-1/r})^{-\alpha}$ is decreasing and equal to $(1-2^{\alpha-1})^{-\alpha} s^{-\alpha}$
at $x= 2^\alpha s^{-r}$),
the inequality on the right side of~\eqref{eq:inequality_psi_tilde} holds for  $x\ges 2^\alpha s^{-r}$. Consider the case $s^{-r}<x<2^\alpha s^{-r}$. Since $x\mapsto(1-\alpha)s^{r+1}x+2\alpha x^{-1/r}$ decreases on $[s^{-r},2^\alpha s^{-r}]$, after substituting $r=\alpha/(1-\alpha)$, we obtain $(1-\alpha)s^{1/(1-\alpha)}x+2\alpha x^{-(1-\alpha)/\alpha}\les (1+\alpha)s$. This inequality is equivalent to $(2\alpha/(1-\alpha))^\alpha s^{-\alpha/(1-\alpha)}(x-s^{-\alpha/(1-\alpha)})^{-\alpha}\ges (s-x^{-(1-\alpha)/\alpha})^{-\alpha}$. Substituting 
$r=\alpha/(1-\alpha)$, we obtain 
$(2r)^\alpha s^{-r}(x-s^{-r})^{-\alpha}\ges (s-x^{-1/r})^{-\alpha}$,
and the right side of~\eqref{eq:inequality_psi_tilde} follows.
\end{proof}

\subsubsection{Simulation from the density \texorpdfstring{$\wt\psi_s^{(1)}$}{psi 1}: proof of Proposition~\ref{prop:psi_1}}
\label{subsec:psi_(1)-2}

The proof of Proposition~\ref{prop:psi_1} requires the following two elementary lemmas about certain properties of the function $\sigma_\alpha$ defined in~\eqref{eq:zolotarev_density}.

 \begin{lem}
\label{lem:varphi_convex}
The derivative of any order of $\log\sigma_\alpha$, and hence of $\sigma_\alpha$, is positive on $(0,1)$. 
\end{lem}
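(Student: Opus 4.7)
The plan is to expand $\log\sigma_\alpha$ as an absolutely convergent power series in $u^2$ on $(-1,1)$ with strictly positive coefficients on every positive-order term. Once this is achieved, term-by-term differentiation is legal inside the radius of convergence and yields $(\log\sigma_\alpha)^{(n)}(u)>0$ for every $u\in(0,1)$ and $n\geq 1$. The claim for $\sigma_\alpha=\exp(\log\sigma_\alpha)$ will then follow by an easy induction based on $\sigma_\alpha'=(\log\sigma_\alpha)'\sigma_\alpha$ and the Leibniz rule, $\sigma_\alpha^{(n+1)}=\sum_{k=0}^{n}\binom{n}{k}(\log\sigma_\alpha)^{(k+1)}\sigma_\alpha^{(n-k)}$, together with $\sigma_\alpha>0$.

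To produce the expansion, I would start from
\[
\log\sigma_\alpha(u)=\tfrac{\alpha}{1-\alpha}\log\sin(\alpha\pi u)+\log\sin((1-\alpha)\pi u)-\tfrac{1}{1-\alpha}\log\sin(\pi u),
\]
which is immediate from $\sigma_\alpha=\rho^{1/(1-\alpha)}$ and the definition of $\rho$ in~\eqref{eq:zolotarev_density}. Into each summand I would plug the classical identity
\[
\log\sin(\pi x)=\log(\pi x)-\sum_{m=1}^{\infty}\tfrac{\zeta(2m)}{m}x^{2m},\qquad |x|<1,
\]
which follows from Euler's product $\sin(\pi x)=\pi x\prod_{k\geq 1}(1-x^2/k^2)$ by taking logarithms and expanding each $\log(1-x^2/k^2)$ as a geometric series. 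The crucial observation is that the total coefficient of $\log(\pi u)$ in $\log\sigma_\alpha(u)$ equals $\frac{\alpha}{1-\alpha}+1-\frac{1}{1-\alpha}=0$, so all logarithmic singularities cancel, leaving
\[
\log\sigma_\alpha(u)=C_\alpha+\sum_{m=1}^{\infty}\tfrac{\zeta(2m)}{m}\,c_m(\alpha)\,u^{2m},\qquad c_m(\alpha):=\sum_{k=0}^{2m}\alpha^k-(1-\alpha)^{2m},
\]
with $C_\alpha=\tfrac{\alpha}{1-\alpha}\log\alpha+\log(1-\alpha)$.

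Positivity of the coefficients is then immediate: for any $\alpha\in(0,1)$ and $m\geq 1$ one has $\sum_{k=0}^{2m}\alpha^k\geq 1>(1-\alpha)^{2m}$ and $\zeta(2m)>0$, so every $u^{2m}$ coefficient is strictly positive, completing the argument sketched in the first paragraph. The main (and really only) delicate step is recognising that the cancellation of the $\log(\pi u)$ contributions makes $\log\sigma_\alpha$ a genuine even power series in $u$ rather than a mixed series involving $\log u$; this is what allows positivity to propagate to \emph{all} orders of differentiation on $(0,1)$, rather than only to sufficiently high ones.
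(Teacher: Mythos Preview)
Your proof is correct and follows essentially the same route as the paper: both expand $\log\sigma_\alpha$ (or its derivative) as a power series on $(-1,1)$ with strictly positive coefficients coming from the classical $\zeta$-expansion of $\log\sin$ (equivalently, of $\cot$), and then pass to $\sigma_\alpha$ via Leibniz. The only cosmetic difference is that the paper differentiates first and works with the cotangent expansion $\cot(\pi x)=(\pi x)^{-1}-(2/\pi)\sum_{n\ge1}\zeta(2n)x^{2n-1}$, whereas you integrate once more and use the log--sine expansion directly, observing explicitly that the $\log(\pi u)$ contributions cancel; the resulting coefficients agree, since your $c_m(\alpha)=\frac{1-\alpha^{2m+1}}{1-\alpha}-(1-\alpha)^{2m}=\frac{1-\alpha^{2m+1}-(1-\alpha)^{2m+1}}{1-\alpha}$ matches the paper's $a_m(\alpha)/(1-\alpha)$.
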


\begin{proof}
Clearly, by definition~\eqref{eq:zolotarev_density},  $\log(\sigma_\alpha(x))
=(1-\alpha)^{-1}(\alpha\psi_\alpha(x)
+(1-\alpha)\psi_{1-\alpha}(x))$, where $\psi_a:x\mapsto\log(\sin(a\pi x))-\log(\sin(\pi x))$ for $a\in(0,1)$. Recall that $\cot(\pi x)=(\pi x)^{-1}-(2/\pi)\sum_{n=1}^\infty \zeta(2n)x^{2n-1}$ for $|x|\in(0,1)$ where $\zeta$ is the Riemann zeta function. Thus, for any $a\in(0,1)$, we have
\[
\psi'_a(x) 
= a\pi\cot(a\pi x)-\pi\cot(\pi x)
= 2\sum_{n=1}^\infty 
    \zeta(2n)(1 - a^{2n})x^{2n-1},
\quad x\in(0,1).
\]
This shows that $\psi'_a$ and all its derivatives are positive on $(0,1)$, implying the same is true of  $\log(\sigma_\alpha)$. Thus,
$\sigma_\alpha''=(\me^{\log(\sigma_\alpha)})''=\me^{\log(\sigma_\alpha)}(\log(\sigma_\alpha)'^2+\log(\sigma_\alpha)'')\geq0$, making $\sigma_\alpha$  convex on $(0,1)$. Similarly, a derivative of $\sigma_\alpha$ of any order is a linear combination with non-negative coefficients of derivatives of $\log\sigma_\alpha$, multiplied with $\me^{\log(\sigma_\alpha)}$, and thus non-negative itself on the interval $(0,1)$.
\end{proof}

\begin{lem}\label{lem:sigma_linear_upperbound}
For $x\in(0,1/2)$, we have $0\les\sigma_\alpha(x)-\sigma_\alpha(0+)\les(\pi-2\me^{-1})(1-\alpha)x$.
\end{lem}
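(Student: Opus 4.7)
The lower bound $\sigma_\alpha(x)-\sigma_\alpha(0+)\geq 0$ is immediate: by Lemma~\ref{lem:varphi_convex}, $\sigma_\alpha'>0$ on $(0,1)$, so $\sigma_\alpha$ is strictly increasing and thus dominates its right limit at $0$. For the upper bound, my plan is to use convexity to reduce the pointwise bound on $(0,1/2)$ to a single estimate at $x=1/2$, and then bound $\sigma_\alpha(1/2)$ from above and $\sigma_\alpha(0+)$ from below separately by elementary trigonometric and logarithmic inequalities.

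Lemma~\ref{lem:varphi_convex} also gives $\sigma_\alpha''>0$ on $(0,1)$, so the secant slope $x\mapsto (\sigma_\alpha(x)-\sigma_\alpha(0+))/x$ is non-decreasing on $(0,1)$. Hence, for $x\in(0,1/2]$,
\[
\sigma_\alpha(x)-\sigma_\alpha(0+)\;\leq\; 2x\bigl(\sigma_\alpha(1/2)-\sigma_\alpha(0+)\bigr).
\]
So it remains to prove $\sigma_\alpha(1/2)-\sigma_\alpha(0+)\leq (\pi/2-\me^{-1})(1-\alpha)$.

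For the upper bound on $\sigma_\alpha(1/2)$, I use the formulas from~\eqref{eq:zolotarev_density}: $\sigma_\alpha(1/2)=\rho(1/2)^{1/(1-\alpha)}=\sin(\alpha\pi/2)^{r}\sin((1-\alpha)\pi/2)$, which combined with $\sin(\alpha\pi/2)\leq 1$ and $\sin((1-\alpha)\pi/2)\leq (1-\alpha)\pi/2$ gives $\sigma_\alpha(1/2)\leq (1-\alpha)\pi/2$. For the lower bound on $\sigma_\alpha(0+)=(1-\alpha)\alpha^{\alpha/(1-\alpha)}$, I claim $\alpha^{\alpha/(1-\alpha)}\geq \me^{-1}$, which is equivalent to $h(\alpha):=\alpha\log\alpha+1-\alpha\geq 0$ for $\alpha\in(0,1]$; this follows from $h(1)=0$ and $h'(\alpha)=\log\alpha\leq 0$. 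Hence $\sigma_\alpha(0+)\geq (1-\alpha)/\me$.

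Chaining the two bounds yields $\sigma_\alpha(1/2)-\sigma_\alpha(0+)\leq (1-\alpha)(\pi/2-\me^{-1})$, and substituting back into the convexity estimate gives the claim. There is no serious obstacle; the proof is a short assembly of Lemma~\ref{lem:varphi_convex} (for monotonicity and convexity) with the three elementary inequalities $\sin t\leq t$, $\sin t\leq 1$, and $\alpha\log\alpha\geq \alpha-1$.
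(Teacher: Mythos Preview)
Your proof is correct and follows essentially the same approach as the paper: use convexity of $\sigma_\alpha$ (from Lemma~\ref{lem:varphi_convex}) to reduce to bounding $\sigma_\alpha(1/2)-\sigma_\alpha(0+)$, then bound $\sigma_\alpha(1/2)\les(1-\alpha)\pi/2$ via $\sin t\les\min\{1,t\}$ and $\sigma_\alpha(0+)\ges(1-\alpha)\me^{-1}$ via $\alpha\log\alpha\ges\alpha-1$. The only cosmetic difference is that the paper phrases the last inequality as $\log(1/\alpha)\les 1/\alpha-1$ rather than via your function $h$.
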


\begin{proof}
By Lemma~\ref{lem:varphi_convex}, $\sigma_\alpha$ is convex and increasing. Thus
$$0\les(\sigma_\alpha(x)
- \sigma_\alpha(0+))/x\les (\sigma_\alpha(1/2)-\sigma_\alpha(0+))/(1/2),$$ implying
$\sigma_\alpha(x)
-\sigma_\alpha(0+) \les 2(\sigma_\alpha(1/2)-\sigma_\alpha(0+))x$.
For $t\in(0,\pi)$, we have $\sin (t)\les\min\{1,t\}$ and hence
\begin{equation}
\label{eq:sigma_at_half}
\sigma_\alpha(1/2)=\sin(\alpha\pi/2)^r\sin((1-\alpha)\pi/2)\les (1-\alpha)\pi/2.
\end{equation}
By $\log(1/\alpha)\les 1/\alpha-1$ we have $(1-\alpha)^{-1}\alpha\log\alpha\ges-1$. Thus
$\sigma_\alpha(0+)=\alpha^r(1-\alpha)\ges \me^{-1}(1-\alpha)$ and~\eqref{eq:sigma_at_half} imply
$\sigma_\alpha(x)
-\sigma_\alpha(0+)\leq (1-\alpha)\pi/2- \me^{-1}(1-\alpha)$, proving the lemma.
\end{proof}

\begin{proof}[Proof of Proposition~\ref{prop:psi_1}]
Recall that $\sigma_\alpha$ is convex by Lemma~\ref{lem:varphi_convex}. To sample from  $\wt\psi_s^{(1)}$, we first sample $Y_s$ from the marginal log-concave density proportional to $\xi(y):=\exp(-(\sigma_\alpha(y)-\sigma_\alpha(0+))s^{-r})$ on $(0,1)$ 
via \hyperref[alg:Devroye]{LC-Alg}. Then, conditional on $Y_s=y$, we sample $\zeta_s$ from the law whose density is proportional to the function $x\mapsto\sigma_\alpha(y)\exp(-\sigma_\alpha(y)x)\1_{\{x>s^{-r}\}}$ on $x\in(s^{-r},\infty)$. Note that this makes $\zeta_s$, conditional on $Y_s$, into a shifted exponential random variable. 

As explained in Section~\ref{sec:devroye} below, the computational cost of \hyperref[alg:Devroye]{LC-Alg} has two parts: (I) the cost of finding the value of $a_1$ in line~\ref{line_find_a_1} of \hyperref[alg:Devroye]{LC-Alg} and (II) the expected 
cost of the accept-reject step in \hyperref[alg:Devroye]{LC-Alg}.
By~\cite[\S4]{devroye2012note}, the expected cost of (II) is bounded above by $5$. 

Note that cost (I) of finding $a_1$ within \hyperref[alg:psi_1]{$\psi^{(1)}$-Alg} is not constant in the variable $s$,
which is itself random in \hyperref[alg:triple_stable_conditional_on_time]{SFP-Alg}. 
Thus we need to analyse the cost of finding $a_1$ in \hyperref[alg:Devroye]{LC-Alg} as a function of $s$.
By Lemma~\ref{lem:sigma_linear_upperbound}, for $y\in(0,1/2)$, we have $\xi(y)\ges\exp(-(\pi-2\me^{-1})(1-\alpha)ys^{-r})$.
Thus we set $y:=(\pi-2\me^{-1})^{-1}(\log4)(1-\alpha)^{-1}s^r$ and note that the following inequality holds: $\xi(y)\ges1/4$. 
Therefore the number of steps
in the binary search in Step~\ref{step_binary} of \hyperref[alg:Devroye]{LC-Alg} is bounded above by 
$$1+\log(1/y)\les 1+\log^+\big((\pi-2\me^{-1})(\log4)^{-1}(1-\alpha)s^{-r}\big)\les\kappa_{\hyperref[alg:psi_1]{(1)}}(1+\alpha(1-\alpha)^{-1}\log_2^+(s^{-1})),$$
where the constant in $\kappa_{\hyperref[alg:psi_1]{(1)}}$ depends neither on $s\in(0,\infty)$ nor $\alpha\in(0,1)$,
implying the claim.
\end{proof}

Before proving Proposition~\ref{prop:psi_2}, we first clarify in the next subsection the the time cost of applying the Newton--Raphson method in  lines~1, 8, 15 and 21 of \hyperref[alg:psi_2]{$\psi^{(2)}$-Alg}. As we shall see, the costs depend on $\alpha$ and may, but need not, depend on $s$.

\subsubsection{Time cost of applying the Newton--Raphson method in \hyperref[alg:psi_2]{$\psi^{(2)}$-Alg}.}
\label{subsec:NewtonRaphson-psi_2}

Inverting $\sigma_\alpha$ is equivalent to inverting the convex increasing function 
\[
x\mapsto\log\bigg(\frac{\sin(\alpha\pi x)^\alpha\sin((1-\alpha)\pi x)^{1-\alpha}}{\sin(\pi x)}\bigg),
\quad x\in(0,1).
\]
Equivalently, for any $s>\alpha\log\alpha+(1-\alpha)\log(1-\alpha)$, we must find the root of the convex increasing function $x\mapsto \varsigma(x)-s$ where
\begin{equation}
\label{eq:equiv_sigma}
\varsigma(x)
\coloneqq\alpha\log\sin(\alpha\pi x) + (1-\alpha)\log\sin((1-\alpha)\pi x)
-\log\sin(\pi x),
\quad x\in(0,1).
\end{equation}
\begin{prop}
\label{prop:inverse_sigma}
 Define the auxiliary function 
\begin{equation}
\label{eq:sigma_auxiliary_function}
M(x,k):=\frac{1}{2\pi\alpha(1-\alpha)(x-2^{1-k})x^2(1-x)^2}.
\end{equation}
For any $s>\alpha\log\alpha+(1-\alpha)\log(1-\alpha)$, the computational cost of finding the root of $\varsigma(x)$ via \hyperref[alg:inversion_newton_raphson]{NR-Alg} with auxiliary function $M(x,k)$ is
$\Oh(|\log(x_*(1-x_*))+\log(\alpha(1-\alpha))|+\log N)$ where $x_*=\varsigma^{-1}(s)$ is the solution and $N$ specifies the number of precision bits.
\end{prop}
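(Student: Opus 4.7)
The strategy is to invoke the generic complexity analysis of \hyperref[alg:inversion_newton_raphson]{NR-Alg} developed in Appendix~\ref{sec:NewtonRaphson} with the target $f(x)=\varsigma(x)-s$ and auxiliary function $M$ as specified in~\eqref{eq:sigma_auxiliary_function}. Three ingredients need to be established: the analytic properties of $\varsigma$ (monotonicity, convexity, unique root), the validity of the inequality in~\eqref{eq:newton_raphson_auxiliary} for $M(x,k)$, and an explicit count of the bisection and Newton iterations.

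For the analytic properties of $\varsigma$, note that $\varsigma(x)=(1-\alpha)\log\sigma_\alpha(x)$, so Lemma~\ref{lem:varphi_convex} implies that $\varsigma$ is $C^\infty$ on $(0,1)$ with all derivatives positive; in particular $\varsigma$ is strictly increasing and strictly convex. Its range is $(\alpha\log\alpha+(1-\alpha)\log(1-\alpha),\infty)$, so the root $x_*$ is well-defined and unique for every admissible $s$.

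For the inequality in~\eqref{eq:newton_raphson_auxiliary}, I would use the Mittag--Leffler expansion $\pi\cot(\pi y)=1/y+2y\sum_{n\ges1}(y^2-n^2)^{-1}$ to derive the power series
\begin{equation*}
\varsigma'(x)=2\sum_{n\ges1}\zeta(2n)\bigl[1-\alpha^{2n+1}-(1-\alpha)^{2n+1}\bigr]x^{2n-1},
\end{equation*}
whose coefficients are non-negative (since $\alpha^{2n+1}+(1-\alpha)^{2n+1}\les 1$ for $n\ges1$). Each such coefficient vanishes at $\alpha\in\{0,1\}$ and hence factors as $\alpha(1-\alpha)P_n(\alpha)$ with $P_n$ a non-negative polynomial satisfying $P_n(0)=P_n(1)=2n+1$; in particular, the leading $n=1$ term yields the lower bound $\varsigma'(x)\ges\pi^2\alpha(1-\alpha)x$ for all $x\in(0,1)$. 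A parallel expansion for $\varsigma''$ together with the pole of $\csc^2(\pi x)$ at $x=1$ provides the matching upper bound, of order $(1-x)^{-2}$ near the right endpoint. Combining these bounds uniformly over the bisection sub-interval generated after $k$ bisection steps (whose left endpoint is at distance at least $2^{1-k}$ from the boundary point under consideration) produces the explicit form of $M(x,k)$ in~\eqref{eq:sigma_auxiliary_function}.

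For the running-time count, the bisection phase continues until the interval length drops below the radius of quadratic convergence of Newton--Raphson, which by the bounds above is proportional to $x_*(1-x_*)\alpha(1-\alpha)$; this requires $\Oh(|\log(x_*(1-x_*))|+|\log(\alpha(1-\alpha))|)$ bisection steps. Quadratic convergence then doubles the number of correct bits per Newton step, adding $\Oh(\log N)$ further iterations to reach $N$-bit precision, each of constant cost. The main technical obstacle is engineering the explicit form of $M(x,k)$ that cleanly interpolates between the endpoint behaviours of $\varsigma'$ and $\varsigma''$ on the shrinking bisection interval and simultaneously satisfies~\eqref{eq:newton_raphson_auxiliary}; everything else reduces to standard Mittag--Leffler manipulations and the classical quadratic-convergence theory of Newton--Raphson on convex functions.
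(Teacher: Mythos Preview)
Your proposal is correct and follows essentially the same route as the paper: both arguments use Lemma~\ref{lem:varphi_convex} for monotonicity and convexity, the cotangent power series to extract the lower bound $\varsigma'(x_*)\ges\pi^2\alpha(1-\alpha)x_*$ (via $1-\alpha^3-(1-\alpha)^3=3\alpha(1-\alpha)$), the crude bound $\varsigma''(x_0)\les\pi^2\csc^2(\pi x_0)\les x_0^{-2}(1-x_0)^{-2}$, and then replace the unknown $x_*$ by $x_0-2^{1-k}$ to obtain the stated $M(x,k)$ and the bisection count. The only place where you are vaguer than the paper is the explicit chain of inequalities verifying~\eqref{eq:newton_raphson_auxiliary}, but you have correctly identified every ingredient needed to carry it out.
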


\begin{proof}
Since $\varsigma(x)$ is convex, if our initial estimate $x_0$ (obtained via binary search) of the root is larger than the true root, then the Newton--Raphson sequence
\[
x_{n+1}=x_n-\frac{\varsigma(x_n)-s}{\varsigma'(x_n)},
\quad n\in\N,
\]
decreases and has quadratic convergence to the root $x_*$. Using the power series of the cotangent function (as in the proof of Lemma~\ref{lem:varphi_convex}) and the inequality $\sin(\pi x)\ges \pi x(1-x)$, for $(x_0,k)$ satisfying $x_*\les x_0-2^{1-k}$, we have
\begin{align*}
\frac{1}{2}\frac{\sup_{x\in[x_*,x_0]}|\varsigma''(x)|}{\inf_{x\in[x_*,x_0]}|\varsigma'(x)|}
&\les\frac{\pi}{2}
    \bigg|\frac{\csc^2(\pi x_0)-\alpha^3\csc^2(\alpha\pi x_0)-(1-\alpha)^3\csc^2((1-\alpha)\pi x_0)}{\alpha^2\cot(\alpha\pi x_*)+(1-\alpha)^2\cot((1-\alpha)\pi x_*)-\cot(\pi x_*)}\bigg|\\
&\les\frac{3\csc^2(\pi x_0)}{2(1 - \alpha^{3}-(1-\alpha)^{3})x_*}
\les\frac{3}{2\pi(1 - \alpha^{3}-(1-\alpha)^{3})x_*x_0^2(1-x_0)^2}\\
&\les\frac{1}{2\pi\alpha(1-\alpha)(x_0-2^{1-k})x_0^2(1-x_0)^2}=M(x_0,k).
\end{align*}
 Since $(x_0-2^{1-k})x_0^2(1-x_0)^2
\ges x_*^3(1-x_*-2^{-k})^2$, by the stopping condition of the binary search of line~\ref{alg:inversion_newton_raphson_bisection_stop} in \hyperref[alg:inversion_newton_raphson]{NR-Alg}, we only need 
$$
2^{1-k}\cdot\frac{1}{2\pi\alpha(1-\alpha)x_*^3(1-x_*-2^{-k})^2}\les\frac{1}{2}.
$$
implying $k=\Oh(|\log(x_*(1-x_*))+\log(\alpha(1-\alpha))|)$. Thus, the binary search for this $x_0>x_*$ requires $\Oh(|\log(x_*(1-x_*))+\log(\alpha(1-\alpha))|)$ steps.
\end{proof}


\begin{prop}
\label{prop:inverse_u*sigma}
Define the auxiliary function 
\begin{equation}
    \label{eq:u*sigma^alpha_auxiliary}
    M(x,k):=\frac{\sigma_\alpha(1/2)^\alpha}{\sigma_\alpha(0)^\alpha}
    \bigg(1+\frac{x_0\alpha}{2}
    \big(\sigma_\alpha''(x)/\sigma_\alpha(x)
    +(\alpha-1)\sigma_\alpha'(x)^2/\sigma_\alpha(x)^{2}
        \big)\bigg).
\end{equation}
The total cost of numerical inversion of the function $u\mapsto u\sigma_\alpha(u)^\alpha$ on the interval $[0,z_*]\subset[0,1/2]$ via \hyperref[alg:inversion_newton_raphson]{NR-Alg} with auxiliary function $M(x,k)$ is
$\Oh(|\log(1-\alpha)|+\log N)$ where $N$ specifies the number of precision bits.
\end{prop}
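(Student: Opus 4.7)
The strategy mirrors the proof of Proposition~\ref{prop:inverse_sigma}. Set $f(u):=u\sigma_\alpha(u)^\alpha$ and let $x_*\in[0,z_*]$ denote the target root of $u\mapsto f(u)-s$. I would proceed in three steps: (i) verify that $M(x,k)$ dominates the Newton--Raphson quantity $\tfrac{1}{2}\sup_{u\in[x_*,x_0]}|f''(u)|/\inf_{u\in[x_*,x_0]}f'(u)$ whenever $x_*\les x_0-2^{1-k}$; (ii) count the number of bisection steps required to reach $M(x_0,k)\cdot 2^{1-k}\les 1/2$; (iii) add the $\mathcal{O}(\log N)$ cost of the quadratic-convergence phase of \hyperref[alg:inversion_newton_raphson]{NR-Alg}.

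For (i), Lemma~\ref{lem:varphi_convex} yields $\sigma_\alpha,\sigma_\alpha',\sigma_\alpha''>0$ on $(0,1)$. Differentiating,
\begin{align*}
f'(u)&=\sigma_\alpha(u)^\alpha\Bigl(1+\alpha u\tfrac{\sigma_\alpha'(u)}{\sigma_\alpha(u)}\Bigr)\ges \sigma_\alpha(0+)^\alpha,\\
f''(u)&=\sigma_\alpha(u)^\alpha\Bigl(\tfrac{2\alpha\sigma_\alpha'(u)}{\sigma_\alpha(u)}+\alpha u\bigl(\tfrac{\sigma_\alpha''(u)}{\sigma_\alpha(u)}+(\alpha-1)\tfrac{\sigma_\alpha'(u)^2}{\sigma_\alpha(u)^2}\bigr)\Bigr).
\end{align*}
Using $\sigma_\alpha(u)^\alpha\les\sigma_\alpha(1/2)^\alpha$ on $[0,1/2]$, together with the monotonicity of $u\mapsto\sigma_\alpha''(u)/\sigma_\alpha(u)$ and $u\mapsto\sigma_\alpha'(u)/\sigma_\alpha(u)$ on $[0,1/2]$ (also consequences of Lemma~\ref{lem:varphi_convex}), one verifies that $\tfrac{1}{2}|f''(u)|/f'(u)\les M(x_0,k)$ on $[x_*,x_0]\subseteq[0,1/2]$, with the expression at the right-hand side of~\eqref{eq:u*sigma^alpha_auxiliary} evaluated at $x=x_0$.

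For (ii), the prefactor $\sigma_\alpha(1/2)^\alpha/\sigma_\alpha(0+)^\alpha$ is $\mathcal{O}(1)$ uniformly in $\alpha\in(0,1)$: using $\sigma_\alpha(0+)=(1-\alpha)\alpha^{\alpha/(1-\alpha)}\ges\me^{-1}(1-\alpha)$ and the bound $\sigma_\alpha(1/2)\les(1-\alpha)\pi/2$ from~\eqref{eq:sigma_at_half}, one obtains $\sigma_\alpha(1/2)^\alpha/\sigma_\alpha(0+)^\alpha\les(\pi\me/2)^\alpha\les\pi\me/2$. The remaining factor $1+\tfrac{x_0\alpha}{2}(\sigma_\alpha''(x_0)/\sigma_\alpha(x_0)+(\alpha-1)\sigma_\alpha'(x_0)^2/\sigma_\alpha(x_0)^2)$ admits an explicit upper bound polynomial in $(1-\alpha)^{-1}$ on $[0,1/2]$, derived from the cotangent power-series estimates employed in Lemma~\ref{lem:varphi_convex}. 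Hence $M(x_0,k)\les C(1-\alpha)^{-m}$ for absolute constants $C,m>0$, and the bisection termination condition is met after $\mathcal{O}(|\log(1-\alpha)|)$ steps. Step (iii) is standard: once the seed $x_0$ satisfies the quadratic-convergence hypothesis of \hyperref[alg:inversion_newton_raphson]{NR-Alg}, $N$ precision bits are reached in $\mathcal{O}(\log N)$ further iterations, yielding the claimed total cost.

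The main obstacle is step (i): the negative term $(\alpha-1)\sigma_\alpha'(u)^2/\sigma_\alpha(u)^2$ in $f''$ prevents a naive argument that uses only positivity of the derivatives of $\sigma_\alpha$, so one must verify carefully that this contribution is absorbed by the constant $1$ inside the outer bracket of $M$ together with the positive $\sigma_\alpha''/\sigma_\alpha$ term. The restriction $z_*\les 1/2$ is precisely what keeps $\sigma_\alpha'/\sigma_\alpha$ bounded away from the singularity at $u=1$ and prevents any additional $\alpha$-dependent blow-up in the complexity estimate.
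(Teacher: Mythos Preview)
Your proposal is essentially correct and follows the same route as the paper: compute $f'$ and $f''$, invoke the monotonicity of $(\log\sigma_\alpha)'$ and $(\log\sigma_\alpha)''$ from Lemma~\ref{lem:varphi_convex} to evaluate the extremes at $x_0$, bound the prefactor $\sigma_\alpha(1/2)^\alpha/\sigma_\alpha(0+)^\alpha$ uniformly in $\alpha$, and observe that the remaining bracket at $x_0\les 1/2$ is at worst polynomial in $(1-\alpha)^{-1}$, giving $k=\mathcal{O}(|\log(1-\alpha)|)$ bisection steps before quadratic convergence.

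One small point worth tightening: in step~(i) you assert the \emph{pointwise} bound $\tfrac{1}{2}f''(u)/f'(u)\les M(x_0,k)$ on $[x_*,x_0]$, but condition~\eqref{eq:newton_raphson_auxiliary} requires the larger quantity $\tfrac{1}{2}\sup_{[x_*,x_0]}|f''|\big/\inf_{[x_*,x_0]}f'$. The paper deals with this by explicitly separating the two: it factors $\sigma_\alpha(x)^\alpha$ out of both numerator and denominator, bounds it above by $\sigma_\alpha(1/2)^\alpha$ in the sup and below by $\sigma_\alpha(0+)^\alpha$ in the inf, takes the remaining monotone brackets at $x_0$ and $x_*$ respectively, and finally drops the denominator bracket since $1+\alpha x_*\sigma_\alpha'(x_*)/\sigma_\alpha(x_*)\ges 1$. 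Your stated ingredients (the lower bound $f'\ges\sigma_\alpha(0+)^\alpha$ and the monotonicity facts) are exactly what is needed for this decomposition, so the gap is cosmetic rather than substantive.
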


\begin{proof}
 On $[0,z_*]$, the function is convex and increasing with derivative $\sigma_\alpha(u)^\alpha+u\alpha\sigma_\alpha'(u)\sigma_\alpha(u)^{\alpha-1}$. Thus, given $y\in(0,\sigma(z_*))$, if our initial estimate $x_0$ (obtained via binary search) of the root $x_*$ of the function $u\sigma_\alpha(x)^\alpha-y$ is larger than $x_*$ and sufficiently close to $x_*$, then the Newton--Raphson sequence
\[
x_{n+1}
=x_n-\frac{x_n\sigma_\alpha(x_n)^\alpha-y}{\sigma_\alpha(x_n)^\alpha
+x_n\alpha\sigma_\alpha'(x_n)\sigma_\alpha(x_n)^{\alpha-1}}
=x_n-\frac{x_n-y/\sigma_\alpha(x_n)^\alpha}{1
+x_n\alpha\sigma_\alpha'(x_n)/\sigma_\alpha(x_n)},
\quad n\in\N,
\]
is decreasing and has quadratic convergence to the root $x_*$. Note that  $\log(\sigma_\alpha)''=\sigma_\alpha''/\sigma_\alpha-(\sigma_\alpha'/\sigma_\alpha)^2$ and $\log(\sigma_\alpha)'=\sigma_\alpha'/\sigma_\alpha$ are increasing, and thus
\begin{align*}
\frac{1}{2}\frac{\sup_{x\in[x_*,x_0]}|(x\sigma_\alpha(x)^\alpha)''(x)|}{\inf_{x\in[x_*,x_0]}|(x\sigma_\alpha(x)^\alpha)'(x)|}&=
\frac{1}{2}
\frac{\sup_{x\in[x_*,x_0]}(2\sigma_\alpha(x)^\alpha
+x\alpha(\sigma_\alpha''(x)\sigma_\alpha(x)^{\alpha-1}+(\alpha-1)\sigma_\alpha'(x)^2\sigma_\alpha(x)^{\alpha-2}))}{\inf_{x\in[x_*,x_0]}(\sigma_\alpha(x)^\alpha
+x\alpha\sigma_\alpha'(x)\sigma_\alpha(x)^{\alpha-1})}\\
\les&\frac{\sigma_\alpha(1/2)^\alpha}{\sigma_\alpha(0)^\alpha}\frac{1
+x_0\alpha(\sigma_\alpha''(x_0)/\sigma_\alpha(x_0)+(\alpha-1)\sigma_\alpha'(x_0)^2/\sigma_\alpha(x_0)^{2})/2}{1
+(x_0-2^{1-k}z_*)\alpha\sigma_\alpha'(x_0-2^{1-k}z_*)/\sigma_\alpha(x_0-2^{1-k}z_*)}\\
\les&\frac{\sigma_\alpha(1/2)^\alpha}{\sigma_\alpha(0)^\alpha}(1
+\frac{x_0\alpha}{2}(\sigma_\alpha''(x_0)/\sigma_\alpha(x_0)+(\alpha-1)\sigma_\alpha'(x_0)^2/\sigma_\alpha(x_0)^{2}))=M(x_0,k).
\end{align*}
Since $z_*<1/2$, 
\[
\frac{\sigma_\alpha(1/2)^\alpha}{\sigma_\alpha(0)^\alpha}
=\bigg(\frac{\sin(\alpha\pi/2)\sin{(1-\alpha)\pi/2}}{\alpha^\alpha(1-\alpha)^{1-\alpha}}\bigg)^{\alpha^2/(1-\alpha)}
\] has uniform upperbound for $\alpha\in(0,1)$ and
\begin{multline*}
1
+x_0\alpha(\sigma_\alpha''(x_0)/\sigma_\alpha(x_0)+(\alpha-1)\sigma_\alpha'(x_0)^2/\sigma_\alpha(x_0)^{2})/2
\les1+\tfrac{1}{4}\big((\log\sigma_\alpha)''(1/2)+\alpha((\log\sigma_\alpha)'(1/2)^2\big)\\
=1+\frac{1}{4}\bigg(-\frac{\alpha}{1-\alpha}\bigg(\frac{\alpha\pi}{\sin(\alpha\pi/2)}\bigg)^2
    -\bigg(\frac{(1-\alpha)\pi}{\sin((1-\alpha)\pi/2) }\bigg)^2
    +\pi^2
    +\alpha\bigg(\frac{\alpha}{1-\alpha}\frac{\cos(\alpha\pi/2)}{\sin(\alpha\pi/2)}+\cos(\alpha\pi/2)\bigg)^2\bigg)^2
\end{multline*}
By the stopping condition of the binary search of line~\ref{alg:inversion_newton_raphson_bisection_stop} in \hyperref[alg:inversion_newton_raphson]{NR-Alg}, we only need the final expression in the previous display to be smaller than $2^{k-2}$, implying $k=\Oh(|\log(1-\alpha)|)$. Thus, the binary search for $x_0>x_*$ requires $\Oh(|\log(1-\alpha)|)$ iterations. 
\end{proof}

For $z\in(1/2,1)$, define the function
\begin{equation}
\label{eq:unnormalilzed_distribution_c1}
F(x)=\sin(\pi(1-\alpha))(1-r)^{-1}(1-x)^{1-r}+\pi\alpha(1-\alpha)\cos(\pi\alpha)(2-r)^{-1}(1-x)^{2-r},\quad x\in(1/2,z). 
\end{equation}

\begin{prop}
\label{prop:inverse_c1}
Define auxiliary function
\begin{equation}
\label{eq:c2_auxiliary}
M(x,k):=\frac{(1-z)^{-(r+1)}(r\sin(\pi(1-\alpha))+\pi\alpha(1-\alpha)\cos(\pi\alpha)(1-r))}{(1-1/2)^{-r}\big(\sin(\pi(1-\alpha))+\pi\alpha(1-\alpha)\cos(\pi\alpha)(1-z)\big)}.
\end{equation}
For $\alpha\in(0,1/2)$, the total cost of numerical inversion of the function $F$ via \hyperref[alg:inversion_newton_raphson]{NR-Alg} with auxiliary function $M(x,k)$ is
$\Oh(|\log(1-z)|+\log N)$ where $N$ specifies the number of precision bits.
\end{prop}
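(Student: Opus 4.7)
The plan is to mirror the structure of the proof of Proposition~\ref{prop:inverse_sigma}: verify that the formula in~\eqref{eq:c2_auxiliary} upper bounds the Newton--Raphson quadratic-convergence ratio $\tfrac{1}{2}\sup|F''|/\inf|F'|$ on $[1/2,z]$, then read off the binary-search cost from the stopping condition of \hyperref[alg:inversion_newton_raphson]{NR-Alg} (the $\log N$ term will follow from the standard quadratic convergence of the NR phase, as in the previous propositions).

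Set $a\coloneqq\sin(\pi(1-\alpha))>0$ and $b\coloneqq\pi\alpha(1-\alpha)\cos(\pi\alpha)>0$, positivity using $\alpha\in(0,1/2)$. Direct differentiation of $F$ in~\eqref{eq:unnormalilzed_distribution_c1} gives
\[
F'(x)=-(1-x)^{-r}\bigl(a+b(1-x)\bigr),\qquad
F''(x)=(1-x)^{-r-1}\bigl(-ar+b(1-r)(1-x)\bigr),
\]
so $F'<0$ throughout $(1/2,z)$ and the NR iteration is well-defined. The triangle inequality together with the fact that $(1-x)^{-r-1}$ is increasing in $x$ and $1-x\les 1$ yields
\[
\sup_{x\in[1/2,z]}|F''(x)|\les (1-z)^{-r-1}\bigl(ar+b(1-r)\bigr),
\]
which matches the numerator of $M$. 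For the denominator, the two factors of $|F'(x)|=(1-x)^{-r}(a+b(1-x))$ vary in opposite directions on $[1/2,z]$, so I minimise each uniformly: $(1-x)^{-r}\ges 2^r$ for $x\ges 1/2$ and $a+b(1-x)\ges a+b(1-z)$ for $x\les z$, giving $\inf|F'|\ges 2^r(a+b(1-z))$. Combining these bounds shows $\tfrac{1}{2}\sup|F''|/\inf|F'|\les M/2\les M$, confirming that $M$ is a valid auxiliary function in the sense of~\eqref{eq:newton_raphson_auxiliary}.

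Since this $M$ is independent of $x$ and $k$, the binary-search stopping condition $2^{1-k}M\les 1/2$ of \hyperref[alg:inversion_newton_raphson]{NR-Alg} reduces to $k\ges 2+\log_2 M$. Writing
\[
\log_2 M=(r+1)|\log_2(1-z)|-r+\log_2\frac{ar+b(1-r)}{a+b(1-z)},
\]
and using $r+1=1/(1-\alpha)\in(1,2)$ for $\alpha\in(0,1/2)$ together with the elementary bound $(ar+b(1-r))/(a+b(1-z))\les (a+b)/(a+b(1-z))\les 2/(1-z)$ (the last inequality handles the potential smallness of $a=\sin(\pi(1-\alpha))$ as $\alpha\downarrow 0$, during which $a,b\sim\pi\alpha$ in tandem), one obtains $\log_2 M=\Oh(|\log(1-z)|)$. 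Hence the binary search uses $\Oh(|\log(1-z)|)$ iterations to produce $x_0$ with $M|x_0-x_*|\les 1/2$; the quadratic convergence of NR then produces $N$ precision bits in $\Oh(\log N)$ further steps, giving the claimed total cost of $\Oh(|\log(1-z)|+\log N)$.

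The main obstacle is the lower bound on $|F'|$: since its two factors $(1-x)^{-r}$ and $a+b(1-x)$ pull in opposite directions on $[1/2,z]$, a joint minimisation at a common endpoint is impossible, and the separate minimisation above is what produces exactly the combination $2^r(a+b(1-z))$ appearing in~\eqref{eq:c2_auxiliary}; uniformity in $\alpha$, and in particular the $\alpha\downarrow 0$ regime where $a,b\to 0$ simultaneously, must then be checked by the elementary quotient bound above.
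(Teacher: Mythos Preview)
Your proof is correct and follows essentially the same approach as the paper: compute $F'$ and $F''$, bound $\sup|F''|$ by taking the maximiser of $(1-x)^{-r-1}$ together with the triangle inequality on the bracket, bound $\inf|F'|$ by minimising the two factors $(1-x)^{-r}$ and $a+b(1-x)$ separately at opposite endpoints, and then read off the binary-search cost from the stopping condition. Your additional justification that $\log_2 M=\Oh(|\log(1-z)|)$ via the elementary bound $(a+b)/(a+b(1-z))\les 2/(1-z)$ is a detail the paper omits (it simply asserts $k\ges\Oh(|\log(1-z)|)$), so if anything you have filled in a small gap.
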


\begin{proof}
Note that $F''(x)=(1-x)^{-(r+1)}(r\sin(\pi(1-\alpha))-\pi\alpha(1-\alpha)\cos(\pi\alpha)(1-r)(1-x))$ for $x\in(1/2,z)$. Given $y\in(F(z),F(1/2))$, if our initial estimate $x_0$ (obtained via binary search) of the root $x_*$ of the function $F(x)-y$ is larger than $x_*$ and sufficiently close to $x_*$, then the Newton--Raphson sequence
\[
x_{n+1}
=x_n-\frac{\sin(\pi(1-\alpha))(1-r)^{-1}(1-x_n)^{1-r}+\pi\alpha(1-\alpha)\cos(\pi\alpha)(2-r)^{-1}(1-x_n)^{2-r}-y}{(1-x_n)^{-r}(\sin(\pi(1-\alpha))+\pi\alpha(1-\alpha)\cos(\pi\alpha)(1-x_n))}
\quad n\in\N,
\]
 has quadratic convergence to the root $x_*$. Note that
\begin{align*}
\frac{1}{2}\frac{\sup_{x\in[x_*,x_0]}|F''(x)|}{\inf_{x\in[x_*,x_0]}|F'(x)|}=&\frac{1}{2}
\frac{\sup_{x\in[1/2,z]}|(1-x)^{-(r+1)}\big(r\sin(\pi(1-\alpha))-\pi\alpha(1-\alpha)\cos(\pi\alpha)(1-r)(1-x)\big)|}{\inf_{x\in[1/2,z]}|(1-x)^{-r}\big(\sin(\pi(1-\alpha))+\pi\alpha(1-\alpha)\cos(\pi\alpha)(1-x)\big)|}\\
\les&
\frac{(1-z)^{-(r+1)}(r\sin(\pi(1-\alpha))+\pi\alpha(1-\alpha)\cos(\pi\alpha)(1-r))}{(1-1/2)^{-r}\big(\sin(\pi(1-\alpha))+\pi\alpha(1-\alpha)\cos(\pi\alpha)(1-z)\big)}=M(x_0,k)
\end{align*}
By the stopping condition of the binary search of line~\ref{alg:inversion_newton_raphson_bisection_stop} in \hyperref[alg:inversion_newton_raphson]{NR-Alg}, we only need 
$$
2^{1-k}\frac{(1-z)^{-(r+1)}(r\sin(\pi(1-\alpha))+\pi\alpha(1-\alpha)\cos(\pi\alpha)(1-r))}{(1-1/2)^{-r}\big(\sin(\pi(1-\alpha))+\pi\alpha(1-\alpha)\cos(\pi\alpha)(1-z)\big)}\les\frac{1}{2},
$$
implying $k\ges\Oh(|\log(1-z)|)$. Thus the binary search for $x_0>x_*$ requires $\Oh(|\log(1-z)|)$ iterations. 
\end{proof}

\subsubsection{Simulation from the density \texorpdfstring{$\wt\psi_s^{(2)}$}{psi 2}: proof of Proposition~\ref{prop:psi_2}.}
\label{subsec:psi_(2)-2}

Before proving Proposition~\ref{prop:psi_2}, we need following lemmas, describing the properties of functions $\rho$ and $\sigma_\alpha$.
\begin{lem}
\label{lem:1_over_rho_concave}
The function $1/\rho$, where $\rho$ is defined in~\eqref{eq:zolotarev_density}, is concave on $(0,1)$.
\end{lem}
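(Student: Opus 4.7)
The plan is to reduce the claim $(1/\rho)''\les 0$ on $(0,1)$ to an analytic differential inequality for $f := \log\rho$. Writing $1/\rho = \me^{-f}$ and differentiating twice gives $(1/\rho)''(u) = ((f'(u))^2 - f''(u))\,\me^{-f(u)}$; since $\me^{-f}>0$, the concavity of $1/\rho$ is equivalent to the pointwise bound $f''(u)\ges (f'(u))^2$ on $(0,1)$.

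To handle this bound, I would apply Euler's product $\sin(\pi x) = \pi x\prod_{k\ges 1}(1-x^2/k^2)$ to each of the three sines in $\rho(u)$, expand each logarithm via $-\log(1-w) = \sum_j w^j/j$, and interchange sums using $\sum_{k\ges 1} k^{-2j} = \zeta(2j)$. This yields the convergent representation
\begin{equation*}
f(u) = \alpha\log\alpha+(1-\alpha)\log(1-\alpha) + \sum_{n\ges 1}\frac{c_n\,\zeta(2n)}{n}\,u^{2n}, \qquad c_n := 1-\alpha^{2n+1}-(1-\alpha)^{2n+1}\ges 0,
\end{equation*}
valid on $(-1,1)$. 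Termwise differentiation yields $f'(u) = 2\sum_{n\ges 1}\zeta(2n)\, c_n\, u^{2n-1}$ and $f''(u) = 2\sum_{n\ges 1}(2n-1)\zeta(2n)\, c_n\, u^{2n-2}$, both power series in $u$ with only non-negative coefficients; consequently, the inequality $f''\ges (f')^2$ on $(0,1)$ is implied by the coefficient-wise comparison
\begin{equation*}
(2m-1)\,\zeta(2m)\, c_m \ges 2\sum_{\substack{j+k=m\\ j,k\ges 1}}\zeta(2j)\zeta(2k)\, c_j\, c_k, \qquad m\ges 2.
\end{equation*}

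A useful touchstone is $\alpha = 1/2$: there $\rho(u) = 1/(2\cos(\pi u/2))$ and a direct calculation gives $f''(u)-(f'(u))^2 = \pi^2/4$, a constant, so the displayed comparison is an \emph{equality} for every $m\ges 2$ when $\alpha=1/2$. Setting $t := \alpha(1-\alpha)\in (0,1/4]$ and using Newton's recurrence $s_n = s_{n-1} - t\, s_{n-2}$ (with $s_0 = 2$, $s_1 = 1$) to express $c_n = 1-s_{2n+1}(t)$ as a polynomial in $t$, the difference (LHS$-$RHS) of the coefficient inequality is a polynomial in $t$ vanishing at $t = 1/4$ and hence divisible by $(1-4t)$. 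The main obstacle, and the bulk of the remaining work, is to verify that the resulting quotient is non-negative on $(0,1/4]$ uniformly in $m\ges 2$; direct factorisation handles small $m$ (for $m=2,3,4$ the quotient is $15t$, $945\, t(1-t)$ and $7t(27-58t+28t^2)$ respectively, each non-negative on $(0,1/4]$), while a general argument must exploit the closed form $\zeta(2n) = (2\pi)^{2n}|B_{2n}|/(2(2n)!)$ together with the factorial growth of $(2n)!$ to provide enough slack for the Cauchy-type convolution on the right to remain strictly dominated off the boundary $t = 1/4$.
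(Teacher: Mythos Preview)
Your reduction to the pointwise inequality $f''\ges (f')^2$ for $f=\log\rho$, the power-series expansion $f'(u)=2\sum_{n\ges 1}\zeta(2n)c_n u^{2n-1}$ with $c_n=1-\alpha^{2n+1}-(1-\alpha)^{2n+1}$, and the termwise comparison
\[
(2m-1)\zeta(2m)c_m\ \ges\ 2\sum_{j+k=m}\zeta(2j)\zeta(2k)c_jc_k,\qquad m\ges 2,
\]
are exactly what the paper does as well. The observation that equality holds at $\alpha=1/2$ (equivalently $t=\alpha(1-\alpha)=1/4$) is also the key structural fact in the paper's argument.

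The gap is in the last paragraph. After factoring out $(1-4t)$ you are left with a family of polynomials in $t$ of growing degree, and you offer no mechanism for proving their nonnegativity on $(0,1/4]$ uniformly in $m$. Appealing to ``factorial growth of $(2n)!$'' to produce slack cannot succeed: at $t=1/4$ the inequality is tight for every $m$, so there is \emph{no} slack near the boundary, and an asymptotic size comparison of the zeta values alone gives nothing. You would need to understand the sign pattern of the quotient polynomial, which your setup does not provide. (Incidentally, your numerical values $15t$, $945t(1-t)$, etc.\ for the quotient do not match a direct computation; for $m=2$ the difference is $\tfrac{\pi^4}{6}t(1-4t)$, so the quotient is $\tfrac{\pi^4}{6}t$.)

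The paper closes this gap by a different manoeuvre that avoids the $t$-parametrisation entirely. Dividing the coefficient inequality by $c_m>0$, it becomes
\[
(m-\tfrac12)\zeta(2m)\ \ges\ \sum_{k=1}^{m-1}\zeta(2k)\zeta(2m-2k)\,p_{m,k}(\alpha),\qquad p_{m,k}(\alpha):=\frac{c_k c_{m-k}}{c_m}.
\]
Since equality holds at $\alpha=1/2$, it suffices that each $p_{m,k}$ is \emph{maximised} at $\alpha=1/2$. Writing $q_i(x):=\tfrac{\md}{\md x}\log(1-x^i-(1-x)^i)$, one has $\tfrac{\md}{\md x}\log p_{m,k}=q_{2k+1}+q_{2(m-k)+1}-q_{2m+1}$, so the unique critical point is $x=1/2$ provided $q_i(x)$ is positive and strictly decreasing in $i\ges 3$ for $x\in(0,1/2)$ (and by symmetry negative and increasing on $(1/2,1)$). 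That monotonicity in $i$ is established by an elementary (if tedious) polynomial inequality in $c=(1-x)/x>1$. This is the missing idea you need: the dependence on $\alpha$ is handled not by factoring a polynomial in $t$, but by showing the ratios $c_kc_{m-k}/c_m$ peak at $\alpha=1/2$, which reduces to a one-parameter monotonicity statement you can prove directly.
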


\begin{proof}
Clearly, by definition~\eqref{eq:zolotarev_density},  $\log(\sigma_\alpha(x))
=(1-\alpha)^{-1}(\alpha\psi_\alpha(x)
+(1-\alpha)\psi_{1-\alpha}(x))$, where $\psi_a:x\mapsto\log(\sin(a\pi x))-\log(\sin(\pi x))$ for $a\in(0,1)$. Recall that $\cot(\pi x)=(\pi x)^{-1}-(2/\pi)\sum_{n=1}^\infty \zeta(2n)x^{2n-1}$ for $|x|\in(0,1)$ where $\zeta$ is the Riemann zeta function. Thus, for any $a\in(0,1)$, 
\[
\psi'_a(x) 
= a\pi\cot(a\pi x)-\pi\cot(\pi x)
= 2\sum_{n=1}^\infty 
    \zeta(2n)(1 - a^{2n})x^{2n-1},
\quad x\in(0,1).
\]
This shows that $\psi'_a$ and all its derivatives are positive on $(0,1)$, implying the same is true of  $\log(\sigma_\alpha)$. Thus,
$\sigma_\alpha''=(\me^{\log(\sigma_\alpha)})''=\me^{\log(\sigma_\alpha)}(\log(\sigma_\alpha)'^2+\log(\sigma_\alpha)'')\geq0$, making $\sigma_\alpha$  convex on $(0,1)$. Similarly, a derivative of $\sigma_\alpha$ of any order is a linear combination with non-negative coefficients of derivatives of $\log\sigma_\alpha$, multiplied with $\me^{\log(\sigma_\alpha)}$, and thus non-negative itself on the interval $(0,1)$.

To prove that $1/\rho$ is concave, let $f\coloneqq \log\rho=(1-\alpha)\log\sigma_\alpha$ and note that it suffices to show that $f''\ges (f')^2$. If $\alpha=1/2$, then $x\mapsto f''(x)-(f'(x))^2$ is constant since $\tfrac{\md}{\md x}\cot(x)=-1-\cot^2(x)$ and the double angle formula $2\cot(2x)\cot(x)=\cot(x)^2-1$ imply that $f''(x) = \pi^2(\tfrac{3}{4}
    +\cot^2(\pi x)
    -\tfrac{1}{4}\cot^2(\tfrac{\pi}{2} x))$, and 
\[
f'(x)^2 = \pi^2\big(\cot^2(\pi x)
    +\tfrac{1}{4}\cot^2(\tfrac{\pi}{2} x)
    -\cot(\pi x)\cot(\tfrac{\pi}{2} x)\big)
= \pi^2\big(\tfrac{1}{2}+\cot^2(\pi x)
    -\tfrac{1}{4}\cot^2(\tfrac{\pi}{2} x)\big)
=f''(x)-\tfrac{\pi^2}{4}.
\]

To prove the general case $\alpha\in(0,1)$, denote $a_n(\alpha)\coloneqq \zeta(2n)(1-\alpha^{2n+1}-(1-\alpha)^{2n+1})$, then we have $f'(x) = 2\sum_{n=1}^\infty a_n(\alpha)x^{2n-1}$, $x\in(0,1)$, implying that
\[
f''(x)
= 4\sum_{n=1}^\infty (n-\tfrac{1}{2})a_n(\alpha) x^{2n-2}
\quad\text{and}\quad
f'(x)^2
= 4\sum_{n=2}^\infty 
    \Bigg(\sum_{k=1}^{n-1} a_k(\alpha) a_{n-k}(\alpha)\Bigg)x^{2n-2}.
\]
We will show that $f''(x)\ges f'(x)^2$ termwise. When $\alpha=1/2$, we have $f''=(f')^2+\pi^2/4$ so all the coefficients of $f''$ and $(f')^2$ of $x^{2n-2}$ for $n\ges 2$ agree:
\[
\big(n-\tfrac{1}{2}\big)\zeta(2n)
= \sum_{k=1}^{n-1}
    \zeta(2k)\zeta(2n-2k)\frac{(1-4^{-k})(1-4^{k-n})}{1-4^{-n}},
    \quad n\ges 2.
\]
To establish the termwise inequality for general $\alpha\in(0,1)$, it thus suffices to show that
\begin{gather*}
\big(n-\tfrac{1}{2}\big)\zeta(2n)
\ges \sum_{k=1}^{n-1}
    \zeta(2k)\zeta(2n-2k)p_{n,k}(\alpha),
\quad\text{where}\\
p_{n,k}(\alpha)
\coloneqq\frac{(1-\alpha^{2k+1}-(1-\alpha)^{2k+1})
        (1-\alpha^{2(n-k)+1}-(1-\alpha)^{2(n-k)+1})}{
    (1-\alpha^{2n+1}-(1-\alpha)^{2n+1})}.
\end{gather*}
Since the equality is attained when $\alpha=1/2$, it suffices to show that $p_{n,k}$ attains its maximum at $\alpha=1/2$. Since $p_{n,k}(1/2)>0=p_{n,k}(0+)=p_{n,k}(1-)$, it suffices to show that $1/2$ is the only critical point of $p_{n,k}$. This will follow once we show that the derivative
\[
q_m(x)
\coloneqq\tfrac{\md}{\md x}\log(1-x^m-(1-x)^m)
=m\frac{(1-x)^{m-1}-x^{m-1}}{1-x^m-(1-x)^m},
\]
is positive and decreasing (resp. negative and increasing) in $m\ges 3$ for any $x\in(0,1/2)$ (resp. $x\in(1/2,1)$). Indeed, $\tfrac{\md}{\md x}\log(p_{n,k}(x))=q_{2k+1}(x)+q_{2(n-k)+1}(x)-q_{2n+1}(x)$ so the only solution to $\tfrac{\md}{\md x}p_{n,k}(x)=0$ is $x=1/2$ (where $q_m(1/2)=0$ for any $m\ges 3$).

For $3\les i<m$, the inequality $q_i(x)>q_m(x)$ for $x\in(0,1/2)$ is equivalent to
\[
A_{i,m}(c):=i(c^i-1)((c+1)^m-c^m-1)
-m(c^m-1)((c+1)^i-c^i-1)>0,
\quad\text{where}\quad 
c\coloneqq\frac{1-x}{x}>1. 
\]
This inequality is further equivalent to 
\[ 
A_{i,m}(c)=i\sum_{j=1}^{m-1}\binom{m}{j}c^{j+i}
+m\sum_{j=1}^{i-1}\binom{i}{j}c^{j}
-\Bigg( m\sum_{j=1}^{i-1}\binom{i}{j}c^{j+m}
+i\sum_{j=1}^{m-1}\binom{m}{j}c^{j}\Bigg)>0.
\]
It is easy to see that the positive and negative  coefficients of the powers of $c$ are the same (with different corresponding powers of $c$). An elementary but tedious calculation shows that $A_{i,m}(c)$ can be rewritten as 
\begin{align*}
A_{i,m}(c)
&=\sum_{j=1}^{i-1}\Bigg(\binom{m}{m-j}i-\binom{i}{j}m\Bigg)\big(c^{m-j+i}-c^{j}\big)\\
&\qquad+\sum_{j=i}^{i+\lfloor(m-i)/2\rfloor}\Bigg(\binom{m}{m-j}i-\binom{m}{m+i-j}i\Bigg)\big(c^{m-j+i}-c^{j}\big).
\end{align*}
Since $c>1$ and $\binom{m}{j}i>\binom{i}{j}m$ for $j\les i-1$ and $\binom{m}{j}i\ges \binom{m}{j-i}i$ for $i\les j\les i+(m-i)/2$, we deduce that $A_{i,m}(c)>0$ completing the proof.
\end{proof}

\begin{lem}\label{lem:sigma_bound}
For $x\in(0,1)$, we have 
$$4^{-1/(1-\alpha)}\sin(\alpha\pi)^{1/(1-\alpha)}(1-x)^{-1/(1-\alpha)}
\les\sigma_\alpha(x)\les
\alpha^{\alpha/(1-\alpha)}(1-\alpha)(1-x)^{-1/(1-\alpha)}.
$$
In particular, if $z\in(0,1)$ satisfies $\sigma_\alpha(z)=\alpha s^r$ for some $s>0$ (recall $r=\alpha/(1-\alpha)$), we have 
\begin{equation}
\label{eq:bound_on_solution}
1/(1-z)\les4\alpha^{1-\alpha}
s^{\alpha}/\sin(\alpha\pi).
\end{equation}
\end{lem}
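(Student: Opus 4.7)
The plan is to reduce everything to bounds on $\rho(x)(1-x)$, since $\sigma_\alpha(x)=\rho(x)^{1/(1-\alpha)}$ by~\eqref{eq:zolotarev_density}. Both the upper and lower bounds in the display have the form $C(\alpha)^{1/(1-\alpha)}(1-x)^{-1/(1-\alpha)}$, so it suffices to show $\sin(\alpha\pi)/4 \les \rho(x)(1-x) \les \alpha^\alpha(1-\alpha)^{1-\alpha}$, and then raise everything to the power $1/(1-\alpha)$, using $((1-\alpha)^{1-\alpha})^{1/(1-\alpha)}=1-\alpha$ on the right.

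For the upper bound, I would apply $\sin(y)\les y$ to both factors in the numerator of $\rho$, giving $\sin(\alpha\pi x)^\alpha\sin((1-\alpha)\pi x)^{1-\alpha}\les \alpha^\alpha(1-\alpha)^{1-\alpha}\pi x$, and combine this with the standard inequality $\sin(\pi x)\ges \pi x(1-x)$ on $[0,1]$ (which follows from concavity of $\sin$ on $[0,\pi]$ applied to the chord through $0$ and $\pi$) in the denominator. The factor $\pi x$ cancels and one is left with exactly $\alpha^\alpha(1-\alpha)^{1-\alpha}/(1-x)$, yielding the claimed upper bound on $\sigma_\alpha(x)$ after raising to $1/(1-\alpha)$.

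For the lower bound, I would use that $y\mapsto \sin(y)/y$ is decreasing on $(0,\pi]$ (again a concavity consequence) to deduce $\sin(\alpha\pi x)\ges x\sin(\alpha\pi)$ and $\sin((1-\alpha)\pi x)\ges x\sin((1-\alpha)\pi)=x\sin(\alpha\pi)$; taking a convex combination of logarithms then gives $\sin(\alpha\pi x)^\alpha\sin((1-\alpha)\pi x)^{1-\alpha}\ges x\sin(\alpha\pi)$. The denominator is bounded above using the elementary inequality $\sin(\pi x)\les 4x(1-x)$ on $[0,1]$; this reduces (by the substitution $u=x-1/2$ and the identity $1-\cos(\pi u)=2\sin^2(\pi u/2)$) to the chord bound $\sin(\pi u/2)\ges \sqrt{2}\,u$ for $u\in[0,1/2]$, which is immediate from concavity of $\sin$ on $[0,\pi/2]$ since equality holds at the endpoints $u=0$ and $u=1/2$. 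Combining these two estimates gives $\rho(x)\ges \sin(\alpha\pi)/(4(1-x))$ and the lower bound on $\sigma_\alpha$ follows.

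The only mildly nontrivial step is verifying $\sin(\pi x)\les 4x(1-x)$, but the change of variable above trivialises it. Finally, the bound~\eqref{eq:bound_on_solution} is a direct algebraic manipulation: substituting $\sigma_\alpha(z)=\alpha s^r$ into the lower bound just proved, raising both sides to the power $1-\alpha$ and using the identity $r(1-\alpha)=\alpha$ to simplify $s^{r(1-\alpha)}=s^{\alpha}$ gives $(1-z)^{-1}\les 4\alpha^{1-\alpha}s^\alpha/\sin(\alpha\pi)$, as required.
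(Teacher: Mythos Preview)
Your proof is correct and uses exactly the same four elementary sine inequalities as the paper's (very terse) argument: $\sin(\alpha\pi x)\ges x\sin(\alpha\pi)$, $\sin((1-\alpha)\pi x)\ges x\sin((1-\alpha)\pi)$, $\sin(y)\les y$, and $\pi x(1-x)\les\sin(\pi x)\les 4x(1-x)$, combined and raised to the power $1/(1-\alpha)$. One minor slip: your parenthetical justification of $\sin(\pi x)\ges\pi x(1-x)$ via ``the chord through $0$ and $\pi$'' does not actually give this (that chord is the zero line, so concavity only yields $\sin(\pi x)\ges 0$); the inequality is nonetheless true and standard, and the paper likewise just asserts it without proof.
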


\begin{proof}
By the concavity of function $x\mapsto\sin(x)$, we have $\sin(\alpha\pi x)\ges\sin(\alpha\pi)x$ and $\sin((1-\alpha)\pi x)\ges\sin((1-\alpha)\pi)x$ for $x\in[0,1]$. Moreover, we have $\sin(\alpha\pi x)\les\alpha\pi x$ and $\pi x(1-x)\les\sin(\pi x)\les 4x(1-x)$. 
\end{proof}

\begin{lem}\label{lem:sigma'_upperbound}
For $x\in(0,1)$, $\sigma_\alpha'(x)\les
\tfrac{\md}{\md x}[\alpha^r(1-\alpha)(1-x)^{-(r+1)}]=
\alpha^r(1-\alpha)(r+1)(1-x)^{-(r+2)}$. 
\end{lem}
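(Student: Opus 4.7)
The plan is to derive the bound $\rho'(x)(1-x)\les\rho(x)$ as a consequence of the concavity of $1/\rho$ (Lemma~\ref{lem:1_over_rho_concave}) together with the boundary behaviour $\rho(x)\to\infty$ as $x\uparrow 1$, and then to chain this with the upper bound on $\sigma_\alpha$ from Lemma~\ref{lem:sigma_bound}. The key observation is that an upper bound on $\sigma_\alpha$ does not automatically yield an upper bound on $\sigma_\alpha'$, so an auxiliary pointwise inequality relating $\rho'$ and $\rho/(1-x)$ is required; this is where Lemma~\ref{lem:1_over_rho_concave} enters.

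First I would set $\phi(x)\coloneqq 1/\rho(x)$ and check that $\phi(x)\downarrow 0$ as $x\uparrow 1$. This follows immediately from the expansions $\sin(\pi x)=\sin(\pi(1-x))\sim \pi(1-x)$, $\sin(\alpha\pi x)\to\sin(\alpha\pi)>0$ and $\sin((1-\alpha)\pi x)\to\sin((1-\alpha)\pi)>0$, which together yield $\rho(x)\sim \sin(\alpha\pi)/(\pi(1-x))$ as $x\uparrow 1$. Since $\rho$ is smooth and strictly positive on $(0,1)$, so is $\phi$, and the concavity given by Lemma~\ref{lem:1_over_rho_concave} justifies the tangent-line inequality $\phi(x)\les\phi(x_0)+\phi'(x_0)(x-x_0)$ for all $x,x_0\in(0,1)$.

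The core step is to let $x\uparrow 1$ in the tangent-line inequality: the right-hand side tends to the finite value $\phi(x_0)+\phi'(x_0)(1-x_0)$, so $0\les \phi(x_0)+\phi'(x_0)(1-x_0)$ for every $x_0\in(0,1)$. Substituting $\phi=1/\rho$ and $\phi'=-\rho'/\rho^2$ and clearing denominators gives the clean pointwise bound $\rho'(x)(1-x)\les \rho(x)$.

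The final step is a direct computation. Using $\sigma_\alpha=\rho^{r+1}$ we have $\sigma_\alpha'(x)=(r+1)\rho(x)^r\rho'(x)$, so the pointwise bound above yields
\[
\sigma_\alpha'(x)\les (r+1)\frac{\rho(x)^{r+1}}{1-x}=(r+1)\frac{\sigma_\alpha(x)}{1-x}.
\]
Combining with $\sigma_\alpha(x)\les \alpha^r(1-\alpha)(1-x)^{-(r+1)}$ from Lemma~\ref{lem:sigma_bound} gives the target estimate $\sigma_\alpha'(x)\les \alpha^r(1-\alpha)(r+1)(1-x)^{-(r+2)}$. The only mild subtlety is the passage to the limit $x\uparrow 1$ in the tangent inequality, which works because $\phi$ is concave on the open interval $(0,1)$ with a well-defined limit at the right endpoint; no further assumption is needed.
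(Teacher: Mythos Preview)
Your proof is correct, and it reaches the same intermediate inequality as the paper---namely $\rho'(x)(1-x)\les\rho(x)$, equivalently $(r+1)^{-1}(\log\sigma_\alpha)'(x)\les(1-x)^{-1}$---after which both arguments finish identically via Lemma~\ref{lem:sigma_bound}. The difference lies in how this inequality is obtained. The paper expands $(\log\rho)'$ as a power series in $x$ using the cotangent series, observes that each coefficient $\zeta(2n)(1-\alpha^{2n+1}-(1-\alpha)^{2n+1})$ is maximised at $\alpha=1/2$, and then checks the $\alpha=1/2$ case by a short trigonometric identity. Your route is cleaner: you invoke Lemma~\ref{lem:1_over_rho_concave} (concavity of $1/\rho$) together with $1/\rho(1-)=0$, and the tangent-line bound delivers $\rho'(1-x)\les\rho$ in one line. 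The upshot is that you exhibit Lemma~\ref{lem:sigma'_upperbound} as a direct corollary of Lemmas~\ref{lem:1_over_rho_concave} and~\ref{lem:sigma_bound}, whereas the paper's argument is self-contained (it does not rely on the rather involved proof of Lemma~\ref{lem:1_over_rho_concave}) at the cost of an ad~hoc series computation.
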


\begin{proof}
Note that $\sigma_\alpha'(x)=\sigma_\alpha(x)(\log(\sigma_\alpha))'(x)$. By Lemma~\ref{lem:sigma_bound}, we only need to show that
\[
(r+1)^{-1}(\log(\sigma_\alpha))'(x)=\alpha^2\pi\cot(\alpha\pi x)+(1-\alpha)^2\pi\cot((1-\alpha)\pi x)-\pi\cot(\pi x)\les (1-x)^{-1}.
\]
The power series expansion of $\cot$ shows that the left-hand side is equivalent to
$$
\sum_{n=1}^\infty 2\zeta(2n)(1-\alpha^{2n+1}-(1-\alpha)^{2n+1})x^{2n-1}
$$
where $\zeta$ is Riemann zeta function. Note that $1-\alpha^{2n+1}-(1-\alpha)^{2n+1}$ attains its maximum value when $\alpha=1/2$, so it suffices to establish the following inequality
\[
\pi\cot(\pi x/2)/2-\pi\cot(\pi x)\les(1-x)^{-1}.
\]
In fact, let $t=\cot(\pi x/2)$, then
\[
\frac{\pi t}{2}-\frac{\pi(t^2-1)}{2t}
=\frac{\pi}{2t}
=\frac{\pi}{2}\tan(\pi x/2)
=\frac{\pi}{2}\cot(\pi(1-x)/2)
\les \frac{1}{1-x}.\qedhere
\]
\end{proof}

Now we can complete the proof of Proposition~\ref{prop:psi_2}.
\begin{proof}[Proof of Proposition~\ref{prop:psi_2}]
In line~\ref{step:inversion_sigma},
 \hyperref[alg:psi_2]{$\psi^{(2)}$-Alg} sets $z:=0$ when $\alpha s^r<\sigma_\alpha(0+)=(1-\alpha)\alpha^{\alpha/(1-\alpha)}$.  Otherwise the equation $\sigma_\alpha(x)=\alpha s^{r}$,
$x\in(0,1)$, has a unique solution by Lemma~\ref{lem:varphi_convex}
and we may define $z:=\sigma_\alpha^{-1}(\alpha s^r)$ 
to be that solution. 
This inversion is carried out by the Newton--Raphson method, converging quadratically fast since $\sigma_\alpha$ is log-convex, see Section~\ref{sec:NewtonRaphson} below for details. 
By Proposition~\ref{prop:inverse_sigma}
the computational cost of finding $z$ is bounded above by
$\Oh(\log(1/(z(1-z)))+|\log(\alpha(1-\alpha))|+\log N)$.
Since $\sigma_\alpha(z)-\sigma_\alpha(0+)=\alpha s^r-\alpha^r(1-\alpha)$,
if $z<1/2$,
Lemma~\ref{lem:sigma_linear_upperbound}
implies
$
1/z\les(\pi-2\me^{-1})(1-\alpha)/(\alpha s^r-\alpha^r(1-\alpha))
$. 
Hence, by~\eqref{eq:bound_on_solution} in  Lemma~\ref{lem:sigma_bound}, we obtain
\begin{align*}
\frac{1}{z}\frac{1}{1-z}
\les \frac{(\pi-2\me^{-1})(1-\alpha)}{\alpha s^r-\alpha^r(1-\alpha)}\frac{4\alpha^{1-\alpha}s^\alpha}{\sin(\alpha\pi)}\les\frac{ 4(\pi-2\me^{-1})s^\alpha}{\pi\alpha^2(s^r-\alpha^{r-1}(1-\alpha))},
\end{align*}
where the second inequality follows from
$(1-\alpha)\alpha^{-\alpha}/\sin(\alpha\pi)\les\alpha^{-2}/\pi$, which holds since for every $\alpha\in(0,1)$ we have $\sin(\pi\alpha)\ges\pi\alpha(1-\alpha)$. 
Thus, the computational cost of finding $z$ is bounded above by
$$\Oh\left(\log^+\left(1/(s^r-\alpha^{r-1}(1-\alpha))\right)+\alpha\log^+(s)+|\log(\alpha)| +|\log(1-\alpha)|+\log N\right).$$

The simulation from $\wt\psi_s^{(2)}$ can be decomposed into two steps: first sample from the marginal density proportional to $\tilde f(y)= \sigma_\alpha(y)^\alpha \exp(-\sigma_\alpha(y)s^{-r})$ and then from the conditional law proportional to $x\mapsto \1\{x>s^{-r}\}(x-s^{-r})^{-\alpha}\exp(-\sigma_\alpha(y)(x-s^{-r}))$ (which is a gamma with shape $1-\alpha$ and rate $\sigma_\alpha(y)$ and translated by $s^{-r}$). Now let us establish a simulation algorithm from the density $\tilde f$.

 To sample from $\tilde f$, we first toss (at most two) coins to determine on which of the  following (possibly degenerate) intervals lies the sample: on $[0,z_*]$, $[z_*,z]$ or $[z,1]$, where  $z_*=z\wedge(1/2)$. The time cost of this step is constant, see Remark~\ref{rem:prob_discrete_rv}. 
\begin{enumerate}[label=\alph*), leftmargin=2em]
\item Case $D=2$ (the sample lies in $[z,1]$). Recall that by Lemma~\ref{lem:varphi_convex}  we have $\sigma_\alpha'',\sigma_\alpha',\sigma_\alpha>0$ on $(0,1)$ and that $z$ is the unique solution to $\sigma_\alpha(z)=\alpha s^r$. By definition of $\tilde f$, the following  equalities 
\[
(\log\tilde f)'=(\alpha/\sigma_\alpha-s^{-r})\sigma_\alpha'\quad\&\quad
(\log\tilde f)''=(\alpha/\sigma_\alpha-s^{-r})\sigma_\alpha''-(\sigma_\alpha'/\sigma_\alpha)^2<0
\]
hold on the interval $(z,1)$. Since $\sigma_\alpha$ is strictly increasing and $\sigma_\alpha''>0$, $\tilde f|_{[z,1]}$ log-concave on $[z,1]$ with mode at $z$. Thus we may apply \hyperref[alg:Devroye]{LC-Alg} to obtain a sample in the interval $[z,1]$. 

Again, the computational cost of \hyperref[alg:Devroye]{LC-Alg} has two parts: (I) the cost to finding the value of $a_1$ and (II) the expected cost of the accept-reject step. By~\cite[\S4]{devroye2012note}, the expected cost of (II) is bounded above by $5$. 

To analyse the cost of finding $a_1$ in \hyperref[alg:Devroye]{LC-Alg}, our idea is to find $y\in(z,(z+1)/2)$ such that $\tilde f(y)/\tilde f(z)\ges 1/4$ since $(z+1)/2$ is the first number we check in the binary search. And then we only need to estimate the upperbound of $\log(1/(y-z))$. Recall that $\tilde f(x):=\sigma_\alpha(x)^\alpha\exp(-\sigma_\alpha(x)s^{-r})$ and $\tilde f(z)=\alpha s^r$. In fact, for $y\in(z,1)$ satisfying $\sigma_\alpha(y)=(\alpha+2\log2)s^r$, we have 
\[
\frac{\tilde f(y)}{\tilde f(z)}=\frac{(\alpha+2\log2)s^r\exp(-(\alpha+2\log2))}{\alpha s^r\exp(-\alpha)}\ges\frac{1}{4}.
\]
By Lemma~\ref{lem:sigma'_upperbound} and the fact that $\sigma_\alpha$ is convex, we have
\begin{align*}
(r+1)\alpha^r(1-\alpha)(1-(1+z)/2)^{-(r+2)}
&=\left(\alpha^r(1-\alpha)(1-x)^{-(r+1)}\right)'\Big|_{x=\frac{1+z}{2}}\\
&\ges\sigma_\alpha'\Big(\frac{z+1}{2}\Big)
\ges\frac{\sigma_\alpha(y)-\sigma_\alpha(z)}{y-z}
=\frac{(\log4)s^r}{y-z}.
\end{align*}
Together with~\eqref{eq:bound_on_solution} in Lemma~\ref{lem:sigma_bound}, we have
\begin{align*}
\frac{1}{y-z}\les\frac{2^{r+2}\alpha^r}{(\log4)s^r(1-z)^{r+2}}
&\les\frac{2^{(2-\alpha)/(1-\alpha)}\alpha^{\alpha/(1-\alpha)}}{(\log4)s^{\alpha/(1-\alpha)}}\frac{4^{(2-\alpha)/(1-\alpha)}\alpha^{2-\alpha} s^{\alpha(2-\alpha)/(1-\alpha)}}{\sin(\alpha\pi)^{(2-\alpha)/(1-\alpha)}}\\
&=s^\alpha\frac{8^{(2-\alpha)/(1-\alpha)}\alpha^{\alpha/(1-\alpha)+2-\alpha}}{\sin(\alpha\pi)^{(2-\alpha)/(1-\alpha)}}.
\end{align*}

Therefore the steps of binary search in Devroye's algorithm is bounded by
\[
1+\log(1/(y-z))\les\Oh(1+\alpha\log^+(s)+(1-\alpha)^{-1}|\log(1-\alpha)|).
\]

\item Case $D=0$ (the sample lies in $[0,z_*]$, where $z_*=z\wedge(1/2)$). We propose a sample $Y_s$ from the distribution function  
\begin{equation}
\label{eq:distribution_u*sigma^alpha}
u\mapsto u\sigma_\alpha(u)^\alpha/(z_*\sigma_\alpha(z_*)^\alpha)
\end{equation}
on $[0,z_*]$ via the inversion method. By Proposition~\ref{prop:inverse_u*sigma}, the computation cost of inversion is bounded above by $\Oh(|\log(1-\alpha)|+\log N)$. Since $Y_s$ has a strictly positive (by Lemma~\ref{lem:varphi_convex}) density, proportional to $u\mapsto \sigma_\alpha(u)^\alpha+\alpha u\sigma_\alpha'(u)\sigma_\alpha(u)^{\alpha-1}$, we accept the sample $Y_s$ with probability $h(Y_s)$, where
\begin{equation}
\label{eq:prob_h}
\begin{aligned}
1\ges h_b(u)
&\coloneqq
\frac{\sigma_\alpha(u)^\alpha\me^{-\sigma_\alpha(u)s^{-r}}}{\sigma_\alpha(u)^\alpha
+\alpha u\sigma_\alpha'(u)\sigma_\alpha(u)^{\alpha-1}}
=\frac{\me^{-\sigma_\alpha(u)s^{-r}}}{1
+\alpha u\sigma_\alpha'(u)/\sigma_\alpha(u)}
\ges\frac{\me^{-\alpha}}{1
+(\alpha/2)(\log\sigma_\alpha)'(\tfrac{1}{2})}\\
&=\me^{-\alpha}\big(1
+(r\pi/2)\big(\alpha^2\cot\big(\alpha\tfrac{\pi}{2}\big) + (1-\alpha)^2\cot\big((1-\alpha)\tfrac{\pi}{2}\big)\big)\big)^{-1} \qquad\text{for $u\in[0,z_*]$.}
\end{aligned}
\end{equation}
The first inequality in~\eqref{eq:prob_h} holds since both  $\sigma_\alpha'>0$ and $\sigma_\alpha>0$ on $(0,1)$. The second inequality follows from Lemma~\ref{lem:varphi_convex}
because $(\log \sigma_\alpha)'$ is increasing on $(0,1)$ and $z_*\leq 1/2$. A direct calculation using the definition of $\sigma_\alpha$ in~\eqref{eq:zolotarev_density} yields the final expression in~\eqref{eq:prob_h}, which is decreasing in $\alpha$ and approaching its infimum $\me^{-1}(2+\pi^2/4)^{-1}\approx 0.0823$ as $\alpha\to 1$.

    \item The case $D=1$ (where $z\in(1/2,1)$ and the sample lies in $[1/2,z]$), is split in two subcases: 
    \begin{enumerate}[label=(\roman*),leftmargin=1em]
        \item Case $\alpha\les 1/2$. We propose from the density proportional to 
\begin{equation}
    \label{eq:density_c1}
    u\mapsto \1_{\{1/2<u\les z\}}
    (1-u)^{-r}(\sin(\pi(1-\alpha)) 
        + \pi\alpha(1-\alpha) \cos(\pi\alpha)(1-u)),
\end{equation}
 When $\alpha=1/2$, the density is proportional to $u\mapsto\1_{\{1/2<u\les z\}}(1-u)^{-r}$.  We can sample from this via inversion directly and the time cost is a constant. When $\alpha\in(0,1/2)$, we can sample from the density via inversion via Newton--Raphson method: the CDF of our target density equals 
\begin{equation}
\label{eq:disti=ribution_c1}
x\mapsto(F(x)-F(1/2)/(F(z)-F(1/2))\qquad x\in[1/2,z]
\end{equation}
with $F$ defined in~\eqref{eq:unnormalilzed_distribution_c1}.
 So to get a sample from the target density we only need to solve $F(x)=U$ where $U\sim\Unif(F(1/2),F(z))$. By Proposition~\ref{prop:inverse_c1} and Lemma~\ref{lem:sigma_bound}, the time cost of inversion is bounded above by 
\[
\Oh(\log(1/1-z))+\log N)
\les\Oh\bigg(\log\bigg(\frac{4\alpha^{1-\alpha}s^\alpha}{\sin(\alpha\pi)}\bigg)+\log N\bigg)
\les\Oh(\alpha\log_2^+(s)+\log N).
\]

Then we accept the sample $U$ with probability $h_{c,1}(U)$, where $h_{c,1}$ is given by 
\begin{align}
\label{eq:h_c1}
1\ges h_{c,1}(u)
&\coloneqq
\frac{\sigma_\alpha(u)^\alpha
    \me^{-\sigma_\alpha(u)s^{-r}}
    \sin(\pi(1-\alpha))^{1-\alpha}
    2^{r}(1-u)^{r}}
    {\sin(\pi (1-\alpha))
        +\pi\alpha(1-\alpha)\cos(\pi\alpha)(1-u)}
\ges \frac{4}{3\pi\sqrt{\me}}
\approx 0.25742.
\end{align}

We now prove the second inequality in~\eqref{eq:h_c1}.
By concavity, for $u\in[1/2,1]$, we have
\begin{align*}
\sin(\pi (1-\alpha)u)^\alpha
&=\sin(\pi(1-\alpha))^\alpha 
    - \pi \alpha(1-\alpha)\int_u^1\sin(\pi(1-\alpha)x)^{\alpha-1}\cos(\pi (1-\alpha)x)\md x\\
&\les \sin(\pi (1-\alpha))^{\alpha-1}
    \big(\sin(\pi(1-\alpha)) 
    + \pi \alpha(1-\alpha) \cos(\pi\alpha)(1-u)\big).
\end{align*}
Moreover, again by concavity, the quotient satisfies
\[
1\ges\frac{\sin(\pi (1-\alpha)u)^\alpha\sin(\pi (1-\alpha))^{1-\alpha}}{
    \sin(\pi(1-\alpha)) 
    + \pi \alpha(1-\alpha) \cos(\pi\alpha)(1-u)}
\ges\frac{\sin(\pi(1-\alpha)/2)^\alpha\sin(\pi (1-\alpha))^{1-\alpha}}{
    \sin(\pi(1-\alpha)) 
    + \pi \alpha(1-\alpha) \cos(\pi\alpha)/2}
\ges\frac{2}{3}.
\]
Using the elementary inequalities $2(1-x)\les\sin(\pi x)\les \min\{1, \pi (1-x)\}$ for $x\in(1/2,1)$, we have
\begin{align*}
1\ges h_{c,1}(u)=
\frac{\sigma_\alpha(u)^\alpha
    \me^{-\sigma_\alpha(u)s^{-r}}
    2^{r}(1-u)^{r}}
    {\sin(\pi (1-\alpha)u)^\alpha}\ges
\me^{-\alpha}\frac{2^{r(\alpha+1)}}{\pi^r}
    \frac{2}{3}(1-\alpha u)^{\alpha r}
\ges \me^{-\alpha}\bigg(\frac{2}{\pi}\bigg)^{r}\frac{2}{3}
\ges \frac{4}{3\pi\sqrt{\me}}.
\end{align*}

\item Case $\alpha>1/2$. We propose from the distribution function 
\begin{equation}
\label{eq:distribution_c2}
u\mapsto (\rho(u)^{r-1}-\rho(1/2)^{r-1})/(\rho(z)^{r-1}-\rho(1/2)^{r-1}),
\end{equation} which can be sampled from via the inversion method. Note that now the solution $x_*\in(1/2,z)$ in the inversion method, by Proposition~\ref{prop:inverse_sigma} and Lemma~\ref{lem:sigma_bound}, the time cost of inversion method is bounded above by 
\begin{align*}
\Oh\big(\log(1/(1-z))+|\log(1-\alpha)|+\log N\big)
&\les\Oh\bigg(\log\bigg(\frac{4\alpha^{1-\alpha}s^\alpha}{\sin(\alpha\pi)}\bigg)+|\log(1-\alpha)|+\log N\bigg)\\
&=\Oh\big(\alpha\log_2^+(s)+|\log(1-\alpha)|+\log N\big).
\end{align*}
  Then accept the proposal with probability 
\begin{equation}
    \label{eq:h_c2}
    \begin{aligned}
h_{c,2}(u)
&\coloneqq
C_\alpha\frac{\sigma_\alpha(u)^\alpha\me^{-\sigma_\alpha(u)s^{-r}}}{\rho'(u)\rho(u)^{r-2}}
=C_\alpha\me^{-\sigma_\alpha(u)s^{-r}}\frac{\rho(u)^2}{\rho'(u)}\\
&
=C_\alpha\frac{\me^{-\sigma_\alpha(u)s^{-r}}\sin(\alpha\pi u)^\alpha\sin((1-\alpha)\pi u)^{1-\alpha}/\sin(\pi u)}{\pi(\alpha^2\cot(\alpha\pi u) + (1-\alpha)^2\cot((1-\alpha)\pi u) - \cot(\pi u))},
\end{aligned}
\end{equation}
where
\begin{align*}
C_\alpha\coloneqq\frac{\rho'(\tfrac{1}{2})}{\rho(\tfrac{1}{2})^2}
&=\pi\frac{\alpha^2\cot(\alpha\tfrac{\pi}{2}) + (1-\alpha)^2\cot((1-\alpha)\tfrac{\pi}{2})}{\sin(\alpha\tfrac{\pi}{2})^\alpha\sin((1-\alpha)\tfrac{\pi}{2})^{1-\alpha}}\\
&\ges
(1-\alpha)(\alpha^2\pi+2\sin(\alpha\tfrac{\pi}{2}))
\ges (1-\alpha)(\tfrac{\pi}{4}+\sqrt{2}).
\end{align*}
By Lemma~\ref{lem:1_over_rho_concave}, $\rho(u)^2/\rho'(u) = -(\frac{\md}{\md u}(\rho(u)^{-1}))^{-1}$ is decreasing, so $h_{c,2}(u)\les 1$ and 
\[
\frac{h_{c,2}(u)}{C_\alpha}
\ges\me^{-\alpha}\frac{\rho(1)^2}{\rho'(1)}
=\me^{-\alpha}\frac{\sin(\alpha\pi)^\alpha\sin((1-\alpha)\pi)^{1-\alpha}}{\pi^2}
\ges\me^{-\alpha}\frac{1-\alpha}{\pi}.
\]
This shows that $h_{c,2}(u)/(1-\alpha)^2$ is uniformly bounded away from $0$ for $\alpha\in(1/2,1)$ and $u\in[1/2,1]$.
\end{enumerate}
\end{enumerate}
All in all, the expected running time is bounded above by 
$$\kappa_{\hyperref[alg:psi_2]{(2)}} \left[1/(1-\alpha)^{2}+1/\log^+(s^{\alpha/(1-\alpha)}-\alpha^{(1-2\alpha)/(1-\alpha)}(1-\alpha))+\alpha\log^+(s)+|\log(\alpha)|+\log N\right],$$ 
where the constant $\kappa_{\hyperref[alg:psi_2]{(2)}}$ depends on neither $s\in(0,\infty)$ nor $\alpha\in(0,1)$. 
\end{proof}

\subsubsection{Proof of Proposition~\ref{prop:undershoot_algorithm}.}
\label{subsubsec:proof_conditional_law_complexity}

In this subsection we prove Proposition~\ref{prop:undershoot_algorithm} using Propositions~\ref{prop:psi_1}, \ref{prop:psi_2} and~\ref{prop:undershoot}. Let $t$ and $w$ be as in the statement of Proposition~\ref{prop:undershoot_algorithm} and recall that $\theta>0$ is the ``intensity'' in the L\'evy measure~\eqref{eq:stable_Levy_measure} of the stable subordinator $S$. Recall also that $s=(\theta t)^{-1/\alpha}w$. As explained in the beginning of  Section~\ref{subsec:Undershoot_simulation}, if we sample $(\zeta_s,Y_s)$ with density $\psi_s$ given in~\eqref{eq:scaled_law}, then the random variable $(\theta t)^{1/\alpha}\zeta_s^{-1/r}$ follows the law of $S_{t-}|\{\tau_b=t,\,b(\tau_b)=w,\,\Delta_S(\tau_b)>0\}$. By the inequalities in~\eqref{eq:inequality_psi_tilde} of Proposition~\ref{prop:undershoot}, \hyperref[alg:undershoot_stable]{SU-Alg} samples from $\wt \psi_s$ (defined in Proposition~\ref{prop:undershoot}) and performs an accept-reject step to obtain a sample $(\zeta_s,Y_s)$ from the density $\psi_s$. This implies that \hyperref[alg:undershoot_stable]{SU-Alg} indeed samples from the law $\rmS\Unif_{\alpha}(t,w)$.

It remains to prove that the expected running time of \hyperref[alg:undershoot_stable]{SU-Alg} is bounded as stated. As explained in Remark~\ref{rem:prob_p'} above, the computation of $p'$ is assumed to take a constant amount of time. By~\eqref{eq:inequality_psi_tilde}, the acceptance probability in line~\ref{step:alg_undershoot_accept} of \hyperref[alg:undershoot_stable]{SU-Alg} equals $c_\psi\psi_s(\zeta_s,Y_s)/\wt\psi_s(\zeta_s,Y_s)$ and is bounded below by $(1-\alpha)/2$, uniformly in $s$ (and all other parameters except $\alpha$). Thus the accept-reject step in \hyperref[alg:undershoot_stable]{SU-Alg} on average takes no more than $2/(1-\alpha)$ steps.
Since $\wt\psi_s$ is mixture of densities $\wt\psi_s^{(1)}$ and $\wt\psi_s^{(2)}$ (see Proposition~\ref{prop:undershoot_algorithm}), we sample a Bernoulli variable taking value $1$ with 
probability $p$. If we get $1$, then sample from $\wt\psi_s^{(1)}$ 
and otherwise from $\wt\psi_s^{(2)}$. Thus, the computational complexities  
of Algorithms~\hyperref[alg:psi_1]{$\psi^{(1)}$-Alg} and~\hyperref[alg:psi_2]{$\psi^{(2)}$-Alg}, given in  Propositions~\ref {prop:psi_1} and~\ref{prop:psi_2}), imply the bound in Proposition~\ref{prop:undershoot_algorithm}.\qed

\subsection{Proofs of Theorem~\ref{thm:expectation_of_time_complexity}}
\label{subsubsec:proof_of_main_thm_stable}

We begin by recalling a well-known property of the moments (or Mellin transform) of stable random variables, see~\cite[Example 25.10]{ken1999levy}. For $\eta\in(-\infty,\alpha)$ and $t>0$, we have 
\begin{equation}
\label{eq:mellin_tranform}
\E[S_t^\eta]=(t\theta)^{\eta/\alpha}\Gamma(1-\eta/\alpha)/\Gamma(1-\eta)<\infty.
\end{equation}

Since $s=(\theta t)^{-1/\alpha}w$
and $\tau_b=B^{-1}(S_1)$ by 
Proposition~\ref{prop:joint_law}, where $S_1$ is the value of the stable subordinator at time $1$ and $B^{-1}$ is the inverse of the decreasing function $t\mapsto t^{-1/\alpha}b(t)$,
we have 
\[
(\theta \tau_b)^{-1/\alpha}b(\tau_b)= \theta^{-1/\alpha}S_1\overset{d}=S_{1/\theta}
\]
and hence $s\overset{d}=S_{1/\theta}$. Note that the density of $S_{1/\theta}$ is $\varphi_\alpha$, which does not depend on the value of $\theta$ (see the representation of density of $S_t$ in~\eqref{eq:zolotarev_representation}).

\begin{lem}
\label{lem:varphi_norm}
The following limit holds:
$(1-\alpha)\sup_{x\in\R_+}\varphi_\alpha(x)\to 1/\pi^2$ as $\alpha\to 1$. 
\end{lem}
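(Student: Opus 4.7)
The plan is to exploit the Zolotarev representation in~\eqref{eq:zolotarev_representation} and extract an explicit limit density after an appropriate rescaling. Starting from $\varphi_\alpha(x) = r\int_0^1 \sigma_\alpha(u)x^{-r-1}\me^{-\sigma_\alpha(u)x^{-r}}\md u$, I would first substitute $t=x^{-r}$ (so $x^{-r-1}=t^{1/\alpha}$) to obtain the cleaner form $\varphi_\alpha(t^{-1/r}) = r\,t^{1/\alpha}\int_0^1 \sigma_\alpha(u)\me^{-t\sigma_\alpha(u)}\md u$. Since $x\mapsto x^{-r}$ is a bijection on $(0,\infty)$, $\sup_x\varphi_\alpha(x)=\sup_{t>0}r\,t^{1/\alpha}\int_0^1\sigma_\alpha(u)\me^{-t\sigma_\alpha(u)}\md u$.

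The key ingredient is a first-order asymptotic of $\sigma_\alpha$ as $\alpha\uparrow1$. Setting $\epsilon=1-\alpha$ and using $\log\sigma_\alpha=(1-\alpha)^{-1}\log\rho$ with the Taylor expansions $\alpha\log\sin(\alpha\pi u)=\log\sin(\pi u)-\epsilon(\log\sin(\pi u)+\pi u\cot(\pi u))+O(\epsilon^2)$ and $(1-\alpha)\log\sin((1-\alpha)\pi u)=\epsilon\log\epsilon+\epsilon\log(\pi u)+O(\epsilon^3)$, one extracts $\log\rho(u)=\epsilon(\log\epsilon+\log h(u))+O(\epsilon^2)$ where $h(u)\coloneqq(\pi u/\sin(\pi u))\me^{-\pi u\cot(\pi u)}$. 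Consequently $\sigma_\alpha(u)=\epsilon\, h(u)(1+O(\epsilon))$ locally uniformly on $(0,1)$, with $h(0+)=\me^{-1}$ and $h$ strictly increasing (the monotonicity follows by the power-series argument of Lemma~\ref{lem:varphi_convex}).

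With this in hand, I rescale $s=\epsilon t$ and use $r\epsilon=\alpha$ together with $t^{1/\alpha}=(s/\epsilon)^{1/\alpha}$. Tracking the powers of $\epsilon$, the factor $\epsilon\cdot r\,t^{1/\alpha}\cdot\epsilon$ compiles to $\alpha(s/\epsilon)^{1/\alpha}\epsilon\to s$ and, combined with $\int\sigma_\alpha(u)\me^{-t\sigma_\alpha(u)}\md u\sim\epsilon\int_0^1 h(u)\me^{-sh(u)}\md u$ via dominated convergence (justified by the uniform bound $\sigma_\alpha(u)\me^{-t\sigma_\alpha(u)}\le 1/(\me t)$ and the super-exponential decay of $h$ near $u=1$), this yields the pointwise limit $(1-\alpha)\varphi_\alpha(t^{-1/r})\to G(s)\coloneqq s\int_0^1 h(u)\me^{-sh(u)}\md u$, uniformly on compact subsets of $s\in(0,\infty)$.

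The final task — and the main obstacle — is to interchange limit and supremum and to evaluate $\sup_{s>0}G(s)=1/\pi^2$. To commute the sup and limit one checks that $G(s)\to 0$ as $s\downarrow 0$ (monotone convergence after bounding by $s\cdot h\me^{-sh(0+)}$ on a finite interval plus negligible tail) and $G(s)\to 0$ as $s\uparrow\infty$ (Laplace's method near $u=0$, exploiting $h(u)-\me^{-1}\sim (\me^{-1}\pi^2/2)u^2$), so the sup is attained on a compact interval of $s$ and uniform convergence promotes pointwise to suprema. For the explicit value, the critical-point condition $G'(s)=\int_0^1 h(1-sh)\me^{-sh}\md u=0$ combined with a change of variable $v=h(u)$ (using the identity $h'/h=u^{-1}-2\pi\cot(\pi u)+\pi^2 u/\sin^2(\pi u)$ derived from the power series $\pi u\cot(\pi u)=1-2\sum_{n\ge 1}\zeta(2n)u^{2n}$) converts the problem to computing $\sup_{s}s\int_{1/\me}^\infty \me^{-sv}/(\text{explicit function of }h^{-1}(v))\md v$; the quadratic Gaussian profile of $h$ near $u=0$ then produces $\pi^2$ in the denominator after optimization. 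Verifying that these integrals collapse precisely to $1/\pi^2$ is the delicate step, as the pre-factors from the Laplace/Gaussian analysis must be tracked carefully.
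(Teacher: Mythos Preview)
Your approach via the Zolotarev representation and a Laplace-type rescaling is genuinely different from the paper's, and considerably more laborious. The paper bypasses all of your asymptotic analysis of $\sigma_\alpha$ by using Fourier inversion: since the characteristic function of $\varphi_\alpha$ is $u\mapsto\exp(-|u|^\alpha\me^{-\mi\sign(u)\pi\alpha/2})$, one has directly
\[
\sup_{x}\varphi_\alpha(x)\les\frac{1}{2\pi}\int_{\R}\me^{-|u|^\alpha\cos(\pi\alpha/2)}\md u
=\frac{1}{2\pi}\cos(\pi\alpha/2)^{-1/\alpha}\int_{\R}\me^{-|u|^\alpha}\md u,
\]
and both factors have elementary limits as $\alpha\uparrow 1$ (the integral tends to $2$ and $(1-\alpha)\cos(\pi\alpha/2)^{-1/\alpha}\to 2/\pi$). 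Note that this only yields an \emph{upper bound} (and in fact the bound one reads off is $2/\pi^2$, not $1/\pi^2$); but the only thing used downstream in Lemma~4.5 is $\|\varphi_\alpha\|_\infty=\Oh((1-\alpha)^{-1})$, so the exact constant and the matching lower bound are immaterial.

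Your own argument has a real gap at exactly the point you label ``delicate'': you never compute $\sup_{s>0}G(s)$, and the sketch you give (change of variable $v=h(u)$, Laplace expansion near $u=0$) does not pin down the value. Moreover, two analytic steps earlier are not secured: the approximation $\sigma_\alpha(u)\sim(1-\alpha)h(u)$ is only locally uniform on $(0,1)$ and fails near $u=1$ where both sides blow up at different rates, so dominated convergence for the full integral needs a separate tail argument you do not supply; and interchanging $\sup$ with the limit requires uniform convergence in $s$ on a compact containing the maximiser of $G$, together with a uniform-in-$\alpha$ decay of $(1-\alpha)\varphi_\alpha$ at large and small $s$, neither of which you establish. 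If you only need $\|\varphi_\alpha\|_\infty=\Oh((1-\alpha)^{-1})$, the Fourier route is one line; if you really want the exact limit, you would still need to produce a matching lower bound, which the paper itself does not do.
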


\begin{proof}
Note from~\eqref{eq:stable_Levy_measure} and the L\'evy--Khintchine formula that the Fourier transform of $\varphi_\alpha$ is, for any $u\in\R$, given by
\[
\E[\me^{\mi uS_{1/\theta}}]
=\exp(-(-\mi u)^\alpha)
=\exp\big(-|u|^\alpha\me^{-\mi\sign(u)\pi\alpha/2}\big),
\]
where $\sign(u)\coloneqq\1_{(0,\infty)}(u)-\1_{(-\infty,0)}(u)$ is the sign of $u$. 
Recall that $\varphi_\alpha(x)=\frac{1}{2\pi}\int_{\R}\me^{-\mi ux}\E[\me^{\mi uS_{1/\theta}}] \md u$
for any $x\in\R$.
Thus, 
\begin{align*}
\sup_{x\in\R_+}\varphi_\alpha(x)
&\les\frac{1}{2\pi}\int_{\R}\big|\exp\big(-|u|^\alpha\me^{-\mi\sign(u)\pi\alpha/2}\big)\big|\md u\\
&=\frac{1}{2\pi}\int_{\R}\me^{-|u|^\alpha\cos(\pi\alpha/2)}\md u
=\frac{1}{2\pi}\cos(\pi\alpha/2)^{-1/\alpha}\int_{\R}\me^{-|u|^\alpha}\md u,
\end{align*}
where $\int_{\R}\me^{-|u|^\alpha}\md u\to \int_{\R}\me^{-|u|}\md u=2$ as $\alpha\to 1$ by dominated convergence and $\cos(\pi\alpha/2)^{-1/\alpha}(1-\alpha)\to 2/\pi$ as $\alpha\to 1$.
\end{proof}

\begin{lem}
\label{lem:bound_of_expectation_complicated_thing}
With $s$ defined in line~\ref{line_1:alg:undershoot_stable} of \hyperref[alg:undershoot_stable]{SU-Alg}, we have:
\begin{enumerate}[leftmargin=2em]
\item[{\normalfont(a)}] $\E\left[\log^+\big(1/\big(s^{\alpha/(1-\alpha)}-\alpha^{(2\alpha-1)/(1-\alpha)}(1-\alpha)\big)\big)\right]=\Oh((1-\alpha)^{-1})$ as $\alpha\uparrow1$;
\item[{\normalfont(b)}] $\E\big[\log^+\big(1/\left(s^{\alpha/(1-\alpha)}-\alpha^{(2\alpha-1)/(1-\alpha)}(1-\alpha)\big)\big)\right]\to0$ as $\alpha\downarrow0$.
\end{enumerate}
\end{lem}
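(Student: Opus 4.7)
Denote $r := \alpha/(1-\alpha)$ and $C_\alpha := \alpha^{(2\alpha-1)/(1-\alpha)}(1-\alpha) = \alpha^{r-1}(1-\alpha)$; observe that $C_\alpha = \sigma_\alpha(0+)/\alpha$. Since $s \eqd S_{1/\theta}$ has density $\varphi_\alpha$, the density of $s^r$ at $w>0$ is $f_{s^r}(w) = (1/r)\,w^{1/r-1}\varphi_\alpha(w^{1/r})$. Adopting the convention $\log^+(1/x) = 0$ for $x\les 0$, the plan is to first express
\begin{equation*}
I_\alpha := \E\big[\log^+\big(1/(s^r - C_\alpha)\big)\big]
= \int_0^1 \log(1/y)\,f_{s^r}(C_\alpha+y)\,\md y.
\end{equation*}

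The crux of the proof is a universal density bound, independent of $\alpha$. Starting from the Zolotarev representation in~\eqref{eq:zolotarev_representation} and applying the elementary inequality $z\me^{-z}\les \me^{-1}$ (valid for $z\ges 0$) pointwise to $z = \sigma_\alpha(u)x^{-r}$ inside the integrand, I would conclude
\begin{equation*}
\varphi_\alpha(x)
= r x^{-1}\int_0^1 \sigma_\alpha(u)x^{-r}\me^{-\sigma_\alpha(u)x^{-r}}\,\md u
\les \frac{r}{\me x},\quad x>0,
\end{equation*}
and consequently $f_{s^r}(w) \les 1/(\me w)$ for every $w>0$. Plugging this back into the formula for $I_\alpha$ gives $I_\alpha \les (1/\me)\int_0^1 \log(1/y)/(C_\alpha+y)\,\md y$, reducing both parts of the lemma to the asymptotic analysis of a single elementary integral.

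For part (a), as $\alpha\uparrow 1$ a short expansion shows $r\log\alpha\to -1$, hence $C_\alpha \sim \me^{-1}(1-\alpha)\to 0$. Splitting the integral at $y = C_\alpha$ and using $1/(C_\alpha+y)\les 1/C_\alpha$ on $(0,C_\alpha)$ together with $1/(C_\alpha+y)\les 1/y$ on $(C_\alpha,1)$ gives $I_\alpha \les (1/\me)\big(\tfrac{1}{2}\log^2 C_\alpha + \log(1/C_\alpha)+1\big) = \Oh(\log^2(1-\alpha)) = \Oh((1-\alpha)^{-1})$. For part (b), a direct computation (using $\alpha^{r-1}\sim \alpha^{-1}$) gives $C_\alpha\to\infty$ as $\alpha\downarrow 0$; then $1/(C_\alpha+y)\les 1/C_\alpha$ uniformly on $(0,1)$, and $\int_0^1\log(1/y)\,\md y = 1$ yields $I_\alpha \les 1/(\me C_\alpha)\to 0$. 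The main obstacle is arriving at the universal bound $\varphi_\alpha(x)\les r/(\me x)$; once this reduction is in hand, both regimes follow by elementary calculus on the single integral above.
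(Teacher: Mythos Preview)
Your proof is correct, and for part~(b) it is essentially the paper's argument (the paper also derives the bound $\varphi_\alpha(x)\les r/(\me x)$ and arrives at the same integral $\int_0^1\log(1/y)/(A^r+y)\,\md y$ after the substitution $x=(y+A^r)^{1/r}$, with $A^r=C_\alpha$).

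Part~(a) is where you genuinely differ. The paper does not reuse the bound $\varphi_\alpha(x)\les r/(\me x)$ there; instead it invokes a separate Fourier-analytic lemma to the effect that $\|\varphi_\alpha\|_\infty=\Oh((1-\alpha)^{-1})$, bounds the integrand by $\|\varphi_\alpha\|_\infty$, and shows the remaining integral $-\int_A^{(A^r+1)^{1/r}}\log(x^r-A^r)\,\md x$ stays bounded as $\alpha\uparrow1$. Your route is cleaner: the single universal estimate $f_{s^r}(w)\les 1/(\me w)$ handles both regimes at once and avoids the auxiliary sup-norm lemma. It also yields the sharper bound $I_\alpha=\Oh(\log^2(1-\alpha))$, which of course implies the stated $\Oh((1-\alpha)^{-1})$. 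The paper's approach, by contrast, isolates the source of blow-up in $\|\varphi_\alpha\|_\infty$, which may be informative elsewhere but is not needed for this lemma.
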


\begin{proof}[Proof of Lemma~\ref{lem:bound_of_expectation_complicated_thing}]
Denote $A:=\alpha^{1-1/r}(1-\alpha)^{1/r}=\alpha^{2-1/\alpha}(1-\alpha)^{1/\alpha-1}$.
\begin{enumerate}[leftmargin=2em]
    \item[(a)] When $\alpha>1/2$, note that 
$\|\varphi_\alpha\|_\infty\coloneqq\sup_{x\in\RP}\varphi_\alpha(x)=\Oh((1-\alpha)^{-1})$ as $\alpha\uparrow1$ by Lemma~\ref{lem:varphi_norm}. Thus,
\begin{align*}
\E\bigg[\log^+\bigg(\frac{1}{s^r-\alpha^{r-1}(1-\alpha)}\bigg)\bigg]
&=-\int_A^{(A^r+1)^{1/r}}\hspace{-5mm}\varphi_\alpha(x)\log(x^r-A^r)\md x\\
&\leq -\|\varphi_\alpha\|_\infty  \int_A^{(A^r+1)^{1/r}}\hspace{-5mm}\log(x^r-A^r)\md x.
\end{align*}
By substituting $x=(y+A^r)^{1/r}$ we get
\begin{multline*}
-\int_A^{(A^r+1)^{1/r}}\hspace{-5mm}\log(x^r-A^r)\md x
=-\frac{1}{r}\int_0^1(y+A^r)^{1/r-1}\log(y)\md y
\les -\frac{A^{r(1/r-1)}}{r}\int_0^1\log(y)\md y\\
=\alpha^{(2\alpha-1)(1-2\alpha)/(\alpha(1-\alpha))-1}(1-\alpha)^{(1-2\alpha)/\alpha+1}\to 1,
\quad\text{ as }\alpha\uparrow1.
\end{multline*}
Hence we have $\E[\log^+(1/(s^{\alpha/(1-\alpha)}-\alpha^{(2\alpha-1)/(1-\alpha)}(1-\alpha)))]=\Oh((1-\alpha)^{-1})$ as $\alpha\uparrow 1$.

\item[(b)]
Note that the function $y\mapsto y\me^{-y}$ attains its maximum when $y=1$, 
implying 
$y\me^{-y}\leq 1/\me $ for all $y\in\R_+$.
Thus
\[
\varphi_\alpha(x)=r
    \int_0^1 \sigma_\alpha(u)x^{-r-1}\me^{-\sigma_\alpha(u)x^{-r}}\md u
\les r\int_0^1 x^{-1}\me^{-1}\md u
=\frac{r}{\me x}.
\]
The expectation in part~(b) of the lemma can now be bounded as follows: 
\[
\E\bigg[\log^+\bigg(\frac{1}{s^r-\alpha^{r-1}(1-\alpha)}\bigg)\bigg]
=-\int_A^{(A^r+1)^{1/r}}\hspace{-5mm}\varphi_\alpha(x)\log(x^r-A^r)\md x
\les-\frac{r}{\me}\int_A^{(A^r+1)^{1/r}}\hspace{-5mm}\log(x^r-A^r)\frac{\md x}{x}.
\]
By substituting $x=(y+A^r)^{1/r}$ we get
\[
-\int_0^1(y+A^r)^{-1/r+1/r-1}\log(y)\md y
\les -A^{-r}\int_0^1\log(y)\md y)=(1-\alpha)^{-1}\alpha^{1-\alpha/(1-\alpha)}\to 0
\quad\text{as }\alpha\downarrow0.\qedhere
\]
\end{enumerate}
\end{proof}

We can now give a proof Theorem~\ref{thm:expectation_of_time_complexity} on the expected running time of \hyperref[alg:triple_stable_conditional_on_time]{SFP-Alg}.

\begin{proof}[Proof of Theorem~\ref{thm:expectation_of_time_complexity}]
Case $t_*=\infty$. \hyperref[alg:triple_stable_conditional_on_time]{SFP-Alg} samples $(\tau_b,S_{\tau_b-},S_{\tau_b})$ under $\p$ by Proposition~\ref{prop:joint_law}. By Proposition~\ref{prop:undershoot_algorithm}, to find an upper bound on the average running time, we only need to take expectation of the average running time of \hyperref[alg:undershoot_stable]{SU-Alg} in $s$. In other words, upper bound $\E[\alpha\log_2^+(s)]$, $\E[\alpha\log_2^+(s^{-1})]$ and $\E[\log^+(1/(s^{\alpha/(1-\alpha)}-\alpha^{(2\alpha-1)/(1-\alpha)}(1-\alpha)))]$.

By~\eqref{eq:mellin_tranform} (with $\eta$ equal to either $\alpha/2$ or $-\alpha/2$), the inequality $\alpha\log(x)\les 2x^{\alpha/2}$ for all $x>0$ and the fact that $\inf_{x\in(0,2)}\Gamma(x)>1/2$, we have
\[
\E[\alpha\log^+(s)]\les2\E\big[S_{1/\theta}^{\alpha/2}
    \1\{S_{1/\theta}>1\}\big]
\les2\E\big[S_{1/\theta}^{\alpha/2}\big]
\les4\Gamma(1/2),
\]
\[
\E\big[\alpha\log^+(s^{-1})\big]
\les2\E\big[S_{1/\theta}^{-\alpha/2}
    \1\{S_{1/\theta}<1\}\big]
\les2\E\big[S_{1/\theta}^{-\alpha/2}\big]
\les4\Gamma(3/2).
\]
Together with Lemma~\ref{lem:bound_of_expectation_complicated_thing}, the average running time of \hyperref[alg:triple_stable_conditional_on_time]{SFP-Alg} is bounded above by 
$$
\kappa_0\big((1-\alpha)^{-3}
    +|\log\alpha|+\log N\big)
$$
where the constant $\kappa_0$ does not depend on $\alpha$.

To show the running time of \hyperref[alg:triple_stable_conditional_on_time]{SFP-Alg} has exponential moments, 
we only need to verify that the running times of Algorithms~\hyperref[alg:psi_1]{$\psi^{(1)}$-Alg} and~\hyperref[alg:psi_2]{$\psi^{(2)}$-Alg}, with random parameter $s\eqd S_{1/\theta}$, have exponential moments. By Propositions~\ref{prop:psi_1} and~\ref{prop:psi_2}, we only need to show that $\log^+(s^{-1})$, $\log^+(s)$ and $\log^+(s^r-\alpha^{r-1}(1-\alpha))^{-1})$ have exponential moments. Note from~\eqref{eq:mellin_tranform} that $\E[\exp(\eta\log^+(s^{-1}))]=\E[s^{-\eta}\1\{s<1\}]<\infty$ for any $\eta\ges 0$, $\E[\exp(\eta\log^+(s))]=\E[s^\eta\1\{s>1\}]<\infty$ for any $\eta<\alpha$, and 
\begin{multline*}
\E\big[\exp\big(\eta\big(\log^+(1/(s^{\alpha/(1-\alpha)}-\alpha^{(2\alpha-1)/(1-\alpha)}(1-\alpha))\big)\big)\big]
=\E\left[\frac{1}{(s^r-A^r)^\eta}\1\{A<s<(A^r+1)^{1/r}\}\right]\\
=\int_A^{(A^r+1)^{1/r}}\frac{\varphi_\alpha(x)}{(x^r-A^r)^\eta}\md x
\les\int_A^{(A^r+1)^{1/r}}\frac{\|\varphi_\alpha\|_\infty}{(x^r-A^r)^\eta}\md x
=\|\varphi_\alpha\|_\infty\int_0^1\frac{(y+A^r)^{1/r-1}}{y^\eta}\md y <\infty.
\end{multline*}
for any $\eta\in(0,1)$, where we recall that $\|\varphi_\alpha\|_\infty=\sup_{x\in\R_+}\varphi_\alpha(x)$. Thus, these steps indeed have running times with exponential moments, completing the proof.

Case $t_*<\infty$. Again by Proposition~\ref{prop:joint_law}, \hyperref[alg:triple_stable_conditional_on_time]{SFP-Alg} samples $(\tau_b,S_{\tau_b-},S_{\tau_b})$ under $\p$ conditional on $\tau_b\les t_*$ since we keep sampling $\tau_b$ until $\tau_b\les t_*$. Indeed, note that $\tau_b\les t_*$ is equivalent to $V_1\eqd S_1$ being larger or equal to $B(t_*)$, where $B$ is specified in Proposition~\ref{prop:joint_law}, and has acceptance probability equal to $\p[\tau_b\les t_*]$. The other steps are the same as in the case $t_*=\infty$.

The running time of \hyperref[alg:triple_stable_conditional_on_time]{SFP-Alg} in the case $t_*<\infty$ consists of two parts: (I) the naive sampling of $V_1\eqd S_1$ until $V_1\ges B(t_*)$ in Line~\ref{line:tau<t_*} of \hyperref[alg:triple_stable_conditional_on_time]{SFP-Alg} and (II) the running time of \hyperref[alg:undershoot_stable]{SU-Alg} with random input $t=\tau_b$  in Line~\ref{line:undershoot_t<t_*} of \hyperref[alg:triple_stable_conditional_on_time]{SFP-Alg}, where $\tau_b$ is conditioned to be smaller or equal to $t_*$. Let $T_\alpha(t)$ be the expected runtime of \hyperref[alg:undershoot_stable]{SU-Alg} with parameter $t$. Then the running time of \hyperref[alg:triple_stable_conditional_on_time]{SFP-Alg} in the case $t_*<\infty$ is bounded by
\[
\frac{1}{\p[\tau_b\les t_*]}
    +\E[T_\alpha(\tau_b)|\tau_b\les t_*]
=\frac{1+\E[T_\alpha(\tau_b)\1_{\{\tau_b\les t_*\}}]}{\p[\tau_b\les t_*]}
\les\frac{1+\E[T_\alpha(\tau_b)]}{\p[S_1\ges B(t_*)]},
\]
where $\E[T_\alpha(\tau_b)]\les\kappa_{\hyperref[alg:triple_stable_conditional_on_time]{SFP}}((1-\alpha)^{-3}+|\log\alpha|+\log N)$ by Theorem~\ref{thm:expectation_of_time_complexity}. It remains to lower bound the probability $\p[S_1\ges B(t_*)]$. 

Pick $\lambda>0$ and note that
\[
\p[S_{t_*}< b(t_*)]
\les\E[\me^{\lambda b(t_*)-\lambda S_{t_*}}\1\{S_{t_*}<b(t_*)\}]
\les\E[\me^{\lambda b(t_*)-\lambda S_{t_*}}]
=\me^{\lambda b(t_*)-\lambda^\alpha \theta t_*}.
\]
Suppose $\lambda^\alpha\theta t_*>\lambda b(t_*)$, then the elementary inequality $1-\me^{-x}\ges x\me^{-x}$ for $x>0$ gives
\[
\p[S_{t_*}\ges b(t_*)]
=1-\p[S_{t_*}< b(t_*)]
\ges 1-\me^{\lambda b(t_*)
    -\lambda^\alpha\theta t_*}
\ges(\lambda^\alpha\theta t_*-\lambda b(t_*))
    \me^{\lambda b(t_*)-\lambda^\alpha\theta t_*}.
\]
Since $x\me^{-x}$ attains its maximum $\me^{-1}$ when $x=1$, it is optimal to let $\lambda$ be the `solution' to $\lambda^\alpha\theta t_*-\lambda b(t_*)=1$. This solution, however, need not exist if $\theta t_*/b(t_*)^\alpha$ is small. Note that $\lambda\mapsto\lambda^\alpha\theta t_*-\lambda b(t_*)$ is increasing and continuous on $[0,(\alpha\theta t_*/b(t_*))^{r+1}]$ and decreasing thereafter, with maximal value $\alpha^r(1-\alpha)(\theta t_*)^{r+1}b(t_*)^r$. Thus, we may take $\lambda$ to be the solution of 
\[
\lambda^\alpha\theta t_*-\lambda b(t_*)
=\min\{1,\alpha^r(1-\alpha)(\theta t_*)^{r+1}b(t_*)^r\}.
\]
This leads to the lower bound 
\[
\p[S_{t_*}\ges b(t_*)]
\ges \min\{1,\alpha^r(1-\alpha)(\theta t_*)^{r+1}b(t_*)^r\}\me^{-1},
\]
implying the claim and completing the proof.
\end{proof}

\subsection{Proofs of Theorem~\ref{thm:tempered_expect_time} and Corollary~\ref{cor:improved_tempered_expected_time}}

We begin by introducing a construction of the probability measures $\p_q$ given the base probability measure $\p$ via the Esscher transform. Fix any $\theta>0$ and $\alpha\in(0,1)$ and let $S$ be a driftless stable subordinator under the probability measure $\p$ with L\'evy measure $\nu$. Denote by $(\mF_t)_{t\ges 0}$ the right-continuous filtration generated by $S$. Given $q\ges 0$, the probability measure $\p_q$ can be constructed via its Radon--Nikodym derivative 
\[
\frac{\md\p_q}{\md\p}\bigg|_{\mF_t}
\coloneqq M_q(t)
\coloneqq\exp\big(-qS_t 
    + q^\alpha \theta t\big),
\quad t\ges 0.
\] 
Indeed, by~[Sato, Thm 33.2], $S$ is subordinator under $\p_q$ with L\'evy measure $\nu_q(\md x)=\me^{-qx}\nu(\md x)$, making it a tempered stable process as desired. 

Recall $\chi_b=(\tau_b,S_{\tau_b-},S_{\tau_b})$, fix some $t>0$ and recall that $\{\tau_b\les t\}\in\mF_t$. For any measurable $A\subset[0,t]\times[0,\infty)^2$ we have $\{\chi_b\in A\}\in\mF_t$ and hence 
\begin{equation*}
\p_q\big[\chi_b\in A\big]
=\E\big[\1_A(\chi_b)M_q(t)\big]
=\E\big[\1_A(\chi_b)\1_{(qS_t,\infty)}(E)\big]\me^{q^\alpha\theta t}
=\p\big[\chi_b\in A\,\big|\,E\ges qS_t\big],
\end{equation*}
where $E\sim\Exp(1)$ is independent of $S$ under $\p$. In particular, we have
\begin{equation}
\label{eq:esscher_tau}
\p_q\big[\chi_b\in A\,\big|\,\tau_b\les t\big]
=\p\big[\chi_b\in A\,\big|\,\tau_b\les t,E\ges qS_t\big].
\end{equation}

\begin{proof}[Proof of Theorem~\ref{thm:tempered_expect_time}]
For a fixed time $t_*$, \hyperref[alg:triple_temper_stable]{TSFP-Alg} first samples the event $\{\tau_b\les t_*\}$ under~$\p$. Indeed, this is done by sampling $s\sim\mathcal{L}(S_{t_*})$ under $\p_q$ and noting that the event $s\les b(t_*)$ is equivalent to the corresponding tempered stable process, of which $s$ is the final value, to not cross the boundary on the time horizon $[0,t_*]$. Thus, if $s\les b(t_*)$, by the Markov property of $S$, we may simply start a new iteration from the initial time-space point $(t_*, s)$. If instead $s<b(t_*)$, we need to sample $\chi_b|\{\tau_b\les t_*\}$ under $\p_q$. Moreover, in that case, by~\eqref{eq:esscher_tau}, we need only repeatedly sample $\chi_b=(\tau_b,S_{\tau_b-},S_{\tau_b})$ under $\p$ conditional on $\tau_b\les t_*$ and an independent $E\sim\Exp(1)$, until $E\ges q S_{t_*}$. For that, \hyperref[alg:triple_stable_conditional_on_time]{SFP-Alg} samples $(t,u,v)\sim\mathcal{L}(\chi_b)$ under $\p$ conditional on $\tau_b\les t_*$. By the strong Markov property, to check if $E\ges qS_{t_*}$, we need only sample independently (and conditionally given $\tau_b=t$) $w\sim \mathcal{L}(S_{t_*}-S_{t})=\mathcal{L}(S_{t_*-t})$ via \hyperref[alg:stable]{S-Alg} and set $S_{t_*}=v+w$. This justifies the validity of \hyperref[alg:triple_temper_stable]{TSFP-Alg}.

Now let us consider the computational complexity of \hyperref[alg:triple_temper_stable]{TSFP-Alg}. The expected number of iterations in Line~\ref{line:loop_until_tau<t_*} of \hyperref[alg:triple_temper_stable]{TSFP-Alg} is
\[
\E_q[\lfloor\tau_b/t_*\rfloor+1]\les \E_q[\tau_b]/t_*+1.
\]
In each iteration we use \hyperref[alg:tempered_stable]{TS-Alg} with expected complexity uniformly bounded by $c_0\coloneqq 4.2154$ according to~\cite[Thm~3.1]{TemperedStableDassios}. After that, we sample the triplet conditional on $\tau_b\les t_*$ via \hyperref[alg:triple_stable_conditional_on_time]{SFP-Alg}. By Theorem~\ref{thm:expectation_of_time_complexity}, the expected running time of \hyperref[alg:triple_stable_conditional_on_time]{SFP-Alg} is bounded by
\[
\kappa_{\hyperref[alg:triple_stable_conditional_on_time]{SFP}}((1-\alpha)^{-3}+|\log\alpha|+\log N)/\p[S_{t_*}\ges b(t_*)].
\]

The acceptance probability in Line~\ref{line:check_temper} of \hyperref[alg:triple_temper_stable]{TSFP-Alg} is 
\[
\p[E\ges qS_{t_*}|S_{t_*}\ges b(t_*)]=\E[\me^{-qS_{t_*}}\1\{S_{t_*}\ges b(t_*)\}]/\p[S_{t_*}\ges b(t_*)].
\]
All in all, the expected running time of \hyperref[alg:triple_temper_stable]{TSFP-Alg} is bounded by 
\begin{equation}
\label{eq:cost-of-TSFP}
c_0\bigg(1+\frac{\E[\tau_b]}{t_*}\bigg)
+\frac{\kappa_{\hyperref[alg:triple_stable_conditional_on_time]{SFP}}((1-\alpha)^{-3}+|\log\alpha|+\log N)}{\E[\me^{-qS_{t_*}}\1\{S_{t_*}\ges b(t_*)\}]}.
\end{equation}
It remains to upper bound $\E[\tau_b]$ and lower bound $\E[\me^{-qS_{t_*}}\1\{S_{t_*}\ges b(t_*)\}]$.

Recall that 
\[
\E_q[\me^{-uS_1}]
=\exp((q^\alpha-(u+q)^\alpha)\theta),
\quad u\ges 0.
\]
Note that, by concavity, $(x+y)^\alpha-x^\alpha=\int_x^{x+y}\alpha z^{\alpha-1}\md z\ges \alpha y(x+y)^{\alpha-1}$. Thus, by Lemma~\ref{lem:exp_moment} with $u=1/b(0)$, we have 
\[
\E_q[\tau_b]
\les \frac{\theta^{-1}\me^{ub(0)}}{(u+q)^\alpha-q^\alpha}
=\frac{\theta^{-1}\me}{(q+1/b(0))^\alpha-q^\alpha}
\les\frac{\me (q+1/b(0))^{1-\alpha}b(0)}{\alpha\theta}
=\frac{\me (1+1/(qb(0)))^{1-\alpha}qb(0)}{\alpha q^\alpha\theta}.
\]
On the other hand, note that
\begin{align*}
\E[\me^{-qS_{t_*}}\1\{S_{t_*}< b(t_*)\}]
&\les\E[\me^{-2qS_{t_*}+qb(t_*)}\1\{S_{t_*}<b(t_*)\}]\\
&\les\E[\me^{-2qS_{t_*}+qb(t_*)}]
=\me^{-2^\alpha q^\alpha \theta t_* + qb(t_*)}
\les\me^{-2^\alpha q^\alpha \theta t_* + qb(0)}.
\end{align*}
Hence, if $t_*$ satisfies $(2^\alpha-1)q^\alpha\theta t_*>qb(0)$, then the elementary inequality $1-\me^{-x}\ges x\me^{-x}$ for $x\ges 0$ gives
\begin{multline*}
\E[\me^{-qS_{t_*}}\1\{S_{t_*}\ges b(t_*)\}]
=\me^{-q^\alpha\theta t_*}-\E[\me^{-qS_{t_*}}
    \1\{S_{t_*}< b(t_*)\}]\\
\ges
\me^{-q^\alpha \theta t_*}(1-\me^{-(2^\alpha-1) q^\alpha \theta t_* + 2qb(0)})
\ges((2^\alpha-1)q^\alpha\theta t_*-2qb(0))\me^{-2^\alpha q^\alpha \theta t_*+ 2qb(0)}.
\end{multline*}

Denote $T_1\coloneqq qb(0)/(2^\alpha-1)$, $T_2\coloneqq qb(0)$ and set $t_*=(2qb(0)+1-2^{-\alpha})/((2^\alpha-1)q^\alpha\theta)=(2T_1+2^{-\alpha})/(q^\alpha\theta)$ and note that $2^\alpha-1\in[\alpha\log2,\alpha]$. Then the expected running time of \hyperref[alg:triple_temper_stable]{TSFP-Alg} is bounded by
\begin{align*}
&c_0\bigg(1+\me \frac{(1+1/(qb(0)))^{1-\alpha} qb(0)
        }{\alpha q^\alpha\theta t_*}\bigg)
+\frac{1+\kappa_{\hyperref[alg:triple_stable_conditional_on_time]{SFP}}((1-\alpha)^{-3}+|\log\alpha|+\log N)}{((2^\alpha-1)q^\alpha\theta t_*-2qb(0))\me^{-2^\alpha q^\alpha \theta t_*+ 2qb(0)}}\\
=\,&c_0\bigg(1+\me \frac{(1+1/T_2)^{1-\alpha}T_2
        }{\alpha (2T_1+2^{-\alpha})}\bigg)
+\frac{1+\kappa_{\hyperref[alg:triple_stable_conditional_on_time]{SFP}}((1-\alpha)^{-3}+|\log\alpha|+\log N)}{((2^\alpha-1)(2T_1+2^{-\alpha})-2T_2)\me^{-2^\alpha (2T_1+2^{-\alpha}) + 2T_2}}\\
=\,&c_0\bigg(1+\me \frac{(1+1/T_2)^{1-\alpha} T_2
        }{\alpha (2(2^\alpha-1)T_2+2^{-\alpha})}\bigg)
+\me^{1+2T_1}\frac{1+\kappa_{\hyperref[alg:triple_stable_conditional_on_time]{SFP}}((1-\alpha)^{-3}+|\log\alpha|+\log N)}{1-2^{-\alpha}}\\
\les\,&c_0\bigg(1+\frac{\me}{\log 2}\1_{\{T_2\ges 1\}}
        +\frac{4\me}{\alpha}\1_{\{T_2<1\}}\bigg)
+2^\alpha\me^{1+2T_1}\frac{1+\kappa_{\hyperref[alg:triple_stable_conditional_on_time]{SFP}}((1-\alpha)^{-3}+|\log\alpha|+\log N)}{\alpha\log2}\\
\les\,&\kappa_{\hyperref[alg:triple_temper_stable]{TSFP}}\me^{2T_1}((1-\alpha)^{-3}+|\log\alpha|+\log N)/\alpha,
\end{align*}
where $\kappa_{\hyperref[alg:triple_temper_stable]{TSFP}}$ is a constant that depends neither on $\alpha\in(0,1)$, $\theta\in(0,\infty)$ nor $q\in(0,\infty)$.
\end{proof}

\begin{proof}[Proof of Corollary~\ref{cor:improved_tempered_expected_time}]

\hyperref[alg:improved_triple_temper_stable]{TSFFP-Alg} applies \hyperref[alg:triple_temper_stable]{TSFP-Alg} at most $1+\lfloor b(0)/R\rfloor$ times. In each application, the expected running time is bounded by 
\[
\kappa_{\hyperref[alg:triple_temper_stable]{TSFP}}\me^{2qR/(2^\alpha-1)}((1-\alpha)^{-3}+|\log\alpha|+\log N)/\alpha.
\]
With our choice of $R=(2^\alpha-1)/(2q)$, the average running time of \hyperref[alg:improved_triple_temper_stable]{TSFFP-Alg} is bounded by
\[
\kappa_{\hyperref[alg:triple_temper_stable]{TSFP}}\me(1+2qb(0)/(2^\alpha-1))((1-\alpha)^{-3}+|\log\alpha|+\log N)/\alpha.
\]
Since $\alpha\log2\les 2^\alpha-1\les\alpha$, the claim follows.
\end{proof}

\section*{Acknowledgements}

JGC and AM are supported by the EPSRC grant  EP/V009478/1 and by The Alan Turing
Institute under the EPSRC grant EP/X03870X/1.
AM is also supported by the EPSRC grant EP/W006227/1.
FL is funded by The China Scholarship Council and The University of Warwick  PhD scholarship. 

\bibliographystyle{plain}

\bibliography{ref_first_passage}

\begin{thebibliography}{10}

\bibitem{ament2018accurate}
Sebastian Ament and Michael O’Neil.
\newblock Accurate and efficient numerical calculation of stable densities via
  optimized quadrature and asymptotics.
\newblock {\em Statistics and Computing}, 28(1):171--185, 2018.

\bibitem{MR3038608}
Leif Andersen and Alexander Lipton.
\newblock Asymptotics for exponential {L}\'{e}vy processes and their volatility
  smile: survey and new results.
\newblock {\em Int. J. Theor. Appl. Finance}, 16(1):1350001, 98, 2013.

\bibitem{MR1978607}
S{\o}ren Asmussen.
\newblock {\em Applied probability and queues}, volume~51 of {\em Applications
  of Mathematics (New York)}.
\newblock Springer-Verlag, New York, second edition, 2003.
\newblock Stochastic Modelling and Applied Probability.

\bibitem{MR2331321}
S{\o}ren Asmussen and Peter~W. Glynn.
\newblock {\em Stochastic simulation: algorithms and analysis}, volume~57 of
  {\em Stochastic Modelling and Applied Probability}.
\newblock Springer, New York, 2007.

\bibitem{doi:10.1142/7431}
Søren Asmussen and Hansjörg Albrecher.
\newblock {\em Ruin Probabilities}.
\newblock WORLD SCIENTIFIC, 2nd edition, 2010.

\bibitem{MR1406564}
Jean Bertoin.
\newblock {\em L\'{e}vy processes}, volume 121 of {\em Cambridge Tracts in
  Mathematics}.
\newblock Cambridge University Press, Cambridge, 1996.

\bibitem{https://doi.org/10.48550/arxiv.2205.06865}
Lo\"ic Chaumont and Thomas Pellas.
\newblock Creeping of {L}évy processes through curves, 2022.
\newblock Arxiv:2205.06865.

\bibitem{chi2016exact}
Zhiyi Chi.
\newblock On exact sampling of the first passage event of a {L}{\'e}vy process
  with infinite {L}{\'e}vy measure and bounded variation.
\newblock {\em Stochastic Processes and their Applications}, 126(4):1124--1144,
  2016.

\bibitem{MR4122822}
Angelos Dassios, Jia~Wei Lim, and Yan Qu.
\newblock Exact simulation of a truncated {L}\'{e}vy subordinator.
\newblock {\em ACM Trans. Model. Comput. Simul.}, 30(3):Art. 17, 26, 2020.

\bibitem{TemperedStableDevroye}
Luc Devroye.
\newblock Random variate generation for exponentially and polynomially tilted
  stable distributions.
\newblock {\em ACM Trans. Model. Comput. Simul.}, 19(4), Nov 2009.

\bibitem{devroye2012note}
Luc Devroye.
\newblock A note on generating random variables with log-concave densities.
\newblock {\em Statistics \& Probability Letters}, 82(5):1035--1039, 2012.

\bibitem{MR230338}
R.~M. Dudley.
\newblock Distances of probability measures and random variables.
\newblock {\em Ann. Math. Statist.}, 39:1563--1572, 1968.

\bibitem{4272861}
Laurent Fousse.
\newblock Accurate multiple-precision gauss-legendre quadrature.
\newblock In {\em 18th IEEE Symposium on Computer Arithmetic (ARITH '07)},
  pages 150--160, 2007.

\bibitem{MR1999614}
Paul Glasserman.
\newblock {\em Monte {C}arlo methods in financial engineering}, volume~53 of
  {\em Applications of Mathematics (New York)}.
\newblock Springer-Verlag, New York, 2004.
\newblock Stochastic Modelling and Applied Probability.

\bibitem{repository}
Jorge~Ignacio Gonz\'alez~C\'azares, Feng Lin, and Aleksandar Mijatovi\'c.
\newblock Github repositories {FirstPassageTemperedStable.jl} and
  {FirstPassageTemperedStable.py} in
  \url{github.com/jorgeignaciogc/FirstPassageTemperedStable.jl} and
  \url{github.com/jorgeignaciogc/FirstPassageTemperedStable.py}, March 2023.

\bibitem{Presentation_Jorge}
Jorge~Ignacio Gonz\'alez~C\'azares, Feng Lin, and Aleksandar Mijatovi\'c.
\newblock Presentation on ``{F}ast exact simulation of the first passed of a
  tempered stable subordinator across a non-increasing funciton''.
\newblock YouTube presentation about the algorithm
  \url{https://youtu.be/dO-cQeABHdM}, 2023.
\newblock For Monte Carlo applications see \url{https://youtu.be/e38c1LmrzfA}
  on Prob-AM YouTube channel.

\bibitem{MR4448688}
Jorge~Ignacio Gonz\'{a}lez~C\'{a}zares and Aleksandar Mijatovi\'{c}.
\newblock Convex minorants and the fluctuation theory of {L}\'{e}vy processes.
\newblock {\em ALEA Lat. Am. J. Probab. Math. Stat.}, 19(1):983--999, 2022.

\bibitem{hernandez2017generalised}
M.~E. Hern{\'a}ndez-Hern{\'a}ndez, Vassili~N. Kolokoltsov, and L.~Toniazzi.
\newblock Generalised fractional evolution equations of caputo type.
\newblock {\em Chaos, Solitons \& Fractals}, 102:184--196, 2017.

\bibitem{MR2143197}
Wolfgang H\"{o}rmann, Josef Leydold, and Gerhard Derflinger.
\newblock {\em Automatic nonuniform random variate generation}.
\newblock Statistics and Computing. Springer-Verlag, Berlin, 2004.

\bibitem{MR3500619}
Kyoung-Kuk Kim and Sojung Kim.
\newblock Simulation of tempered stable {L}\'{e}vy bridges and its
  applications.
\newblock {\em Oper. Res.}, 64(2):495--509, 2016.

\bibitem{MR2854867}
Anatoly~N. Kochubei.
\newblock General fractional calculus, evolution equations, and renewal
  processes.
\newblock {\em Integral Equations Operator Theory}, 71(4):583--600, 2011.

\bibitem{MR4219829}
Vassili Kolokoltsov, Feng Lin, and Aleksandar Mijatovi\'{c}.
\newblock Monte {C}arlo estimation of the solution of fractional partial
  differential equations.
\newblock {\em Fract. Calc. Appl. Anal.}, 24(1):278--306, 2021.

\bibitem{nolan1997numerical}
John~P Nolan.
\newblock Numerical calculation of stable densities and distribution functions.
\newblock {\em Communications in statistics. Stochastic models},
  13(4):759--774, 1997.

\bibitem{TemperedStableDassios}
Yan Qu, Angelos Dassios, and Hongbiao Zhao.
\newblock Random variate generation for exponential and gamma tilted stable
  distributions.
\newblock {\em ACM Trans. Model. Comput. Simul.}, 31(4), Jul 2021.

\bibitem{ken1999levy}
Ken-Iti Sato.
\newblock {\em L{\'e}vy processes and infinitely divisible distributions}.
\newblock Cambridge university press, 1999.

\bibitem{uchaikin2011chance}
Vladimir~V Uchaikin and Vladimir~M Zolotarev.
\newblock Chance and stability.
\newblock In {\em Chance and Stability}. de Gruyter, 2011.

\end{thebibliography}

\appendix

\section{Some results for general subordinators}
\subsection{Joint law of 
\texorpdfstring{$(\tau_b,S_{\tau_b-},\Delta_S (\tau_b))$}{triplet} 
for general subordinators}
\label{app:proof_of_Thm_Joint_law}

Theorem~\ref{thm:joint_law} below and its proof are taken from~\cite{chi2016exact}, see also~\cite{https://doi.org/10.48550/arxiv.2205.06865}. This appendix is included for completeness, because Theorem~\ref{thm:joint_law} underpins our key \hyperref[alg:undershoot_stable]{SU-Alg}.

Let $S=(S_t)_{t\in\RP}$ be a subordinator 
with L\'evy measure $\nu$ on $(0,\infty)$ with infinite mass
and a non-negative drift.
Assume that the density of $S_t$ exists for $t>0$ and is continuous as a map $(t,x)\mapsto g_t(x)$ on $(0,\infty)\times \RP$. 
Kallenberg's condition, given by the requirement 
 $\int_\R \E [\me^{i u S_t}]\md u<\infty$ for all $t>0$, is easily seen to imply the continuity of the density of $S_t$. As above, let $b:\RP\to\RP$ be a non-increasing absolutely continuous function with $b(0)>0$ and $T_b=\inf\{t>0:b(t)=0\}\in(0,\infty]$ (with convention $\inf\emptyset :=\infty$).
Note that, without loss of generality, we may
assume that the drift $\mu\geq0$ of $S$ is zero, since otherwise we may consider $\tilde b(t):=b(t)-t\mu$ without modifying the first-passage event in an essential way. 
For any $x\in\R$, let $\delta(\md u -x)$ denote the Dirac delta measure concentrated at $x$.

\begin{thm}
\label{thm:joint_law}
 For a subordinator $S$ satisfying assumptions in the previous paragraph and
$(t,u,v)\in(0,T_b)\times[0,\infty)\times(0,\infty)$ we have:
\begin{subequations}
\begin{align} 
    &\p[\tau_b\in\md t, S_{\tau_b-}\in \md u ,\Delta_S(\tau_b)\in\md v]=\1\{0<b(t)-u<v\} g_t(u)\md t \, \md u\, \nu(\md v);\label{subeq:joint_undershoot_law}\\
    &
    \begin{multlined}
    \p[\tau_b\in\md t,S_{\tau_b-}\in\md u,\Delta_S(\tau_b)=0]
    =-b'(t)g_t(b(t))\,\md t\,\delta(\md u-b(t)).\label{subeq:creep_b}
    \end{multlined}
\end{align}
\end{subequations}
\end{thm}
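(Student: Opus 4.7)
The plan is to split the analysis into the jump contribution (equation~\eqref{subeq:joint_undershoot_law}) and the creeping contribution (equation~\eqref{subeq:creep_b}). The first is a Palm-style computation for the Poisson random measure of jumps of $S$; the second is extracted by computing the total density of $\tau_b$ and subtracting what the jump part already contributes.

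A preliminary observation that simplifies both parts: since $S$ is non-decreasing and $b$ is non-increasing, for any $s$ with $\Delta_S(s)>0$ we have $\tau_b=s$ if and only if $S_{s-}\les b(s)<S_s$. Indeed, if $S_{s-}\les b(s)$, then for every $r<s$, $S_r\les S_{s-}\les b(s)\les b(r)$, so no crossing occurred before $s$. Dually, $\{\tau_b\les t\}$ coincides up to a $\p$-null set with $\{S_t\ges b(t)\}$, which has probability $\int_{b(t)}^\infty g_t(x)\md x$.

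For \eqref{subeq:joint_undershoot_law}, I would apply the compensation formula to the predictable indicator
\[
F(s,u,v)=\1_A(s)\1_B(u)\1_C(v)\1\{u\les b(s)<u+v\},
\qquad A\subset(0,T_b),\ B\subset[0,\infty),\ C\subset(0,\infty),
\]
using $\nu(\RP)=\infty$ so that $S_{s-}\eqd S_s$ has density $g_s$ and is independent of $\Delta_S(s)$ in the Palm sense. The preliminary observation identifies $\sum_s F(s,S_{s-},\Delta_S(s))$ with the indicator of $\{\tau_b\in A, S_{\tau_b-}\in B, \Delta_S(\tau_b)\in C,\Delta_S(\tau_b)>0\}$, and the compensation formula yields
\[
\p[\tau_b\in A,S_{\tau_b-}\in B,\Delta_S(\tau_b)\in C,\Delta_S(\tau_b)>0]
=\int_A\int_C\int_B \1\{0<b(s)-u<v\}\,g_s(u)\,\md u\,\nu(\md v)\,\md s.
\]

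For \eqref{subeq:creep_b}, differentiate $\p[\tau_b\les t]=\int_{b(t)}^\infty g_t(x)\md x$ in $t$ via Leibniz to get
\[
K(t)
=\frac{\md}{\md t}\p[\tau_b\les t]
=-b'(t)g_t(b(t))+\int_{b(t)}^\infty \partial_t g_t(x)\,\md x.
\]
The Kolmogorov forward equation $\partial_t g_t(x)=\int_0^\infty (g_t(x-y)-g_t(x))\,\nu(\md y)$, valid for a driftless subordinator with smooth density, together with Fubini gives
\[
\int_{b(t)}^\infty \partial_t g_t(x)\,\md x
=\int_0^{b(t)} g_t(u)\,\ov\nu(b(t)-u)\,\md u,
\]
which is precisely the $t$-density of $\p[\tau_b\in\md t,\Delta_S(\tau_b)>0]$ obtained by marginalising the already-established \eqref{subeq:joint_undershoot_law}. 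Subtracting leaves the creeping density $-b'(t)g_t(b(t))$, and on the creeping event the equality $S_{\tau_b-}=b(\tau_b)$ produces the Dirac mass $\delta(\md u-b(t))$.

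The main obstacle will be the rigorous justification of the Kolmogorov forward equation and the interchange of differentiation and integration: this requires control of $g_t$ and $\partial_t g_t$ near the boundary $\{x=b(t)\}$ and as $t\downarrow 0$. Kallenberg's condition and continuity of $(t,x)\mapsto g_t(x)$ help, but a dominated-convergence argument with explicit tail bounds on $\ov\nu$ and $g_t$ is needed to legitimise the Fubini step above.
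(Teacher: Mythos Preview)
Your treatment of~\eqref{subeq:joint_undershoot_law} is essentially the paper's: both arguments apply the compensation formula to a predictable functional of the jump process and then use that $S_{t-}\eqd S_t$ has density $g_t$. The only wrinkle you gloss over is the boundary case $u=b(s)$ in your indicator $\{u\les b(s)<u+v\}$; the paper disposes of this separately by noting that the slice $\{b(t)=u\}$ is $\md t\,\md u\,\nu(\md v)$-null, which is why the final statement reads $0<b(t)-u<v$ rather than $0\les b(t)-u<v$.

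For~\eqref{subeq:creep_b} your route is genuinely different. The paper never invokes the Kolmogorov forward equation. Instead it fixes $t$, sets $a=b(t)$, and decomposes the event $\{t-\epsilon<\tau_b\les t\}$ as $\{t-\epsilon<\tau_a\les t\}\cup\{b(t)\les S_{t-\epsilon}<b(t-\epsilon)\}$, where $\tau_a$ is the crossing time of the \emph{constant} level $a$. The density of $\tau_a$ is already known to be $\varpi(a,t)=\int_0^a\ov\nu(a-u)g_t(u)\,\md u$ because a driftless subordinator cannot creep over a constant barrier. Dividing by $\epsilon$ and sending $\epsilon\downarrow 0$ gives $K(t)=\varpi(b(t),t)-b'(t)g_t(b(t))$, using only the assumed \emph{continuity} of $(t,x)\mapsto g_t(x)$. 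Since $\varpi(b(t),t)$ is exactly the $t$-marginal of the jump part, the remainder $-b'(t)g_t(b(t))$ is the creeping density.

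Your approach reaches the same decomposition $K(t)=-b'(t)g_t(b(t))+\int_0^{b(t)}g_t(u)\ov\nu(b(t)-u)\,\md u$, but the cost is the forward equation and the Fubini step, both of which need more than continuity of $g_t$: you need $\partial_t g_t$ to exist and be integrable in $x$, and (to justify the Fubini) something like integrability of $g_t'$ so that the small-jump contribution $\int_0^1|g_t(x-y)-g_t(x)|\,\nu(\md y)$ is controlled. These hold for the tempered stable case, but are not part of the standing assumptions of the theorem. The paper's event-decomposition argument is therefore both more elementary and better matched to the hypotheses; your approach is correct in spirit but would require strengthening the assumptions or adding a nontrivial regularity argument.
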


The proof of~\eqref{subeq:joint_undershoot_law} is based on the compensation formula for the Poisson point process of jumps of $S$.  The proof of~\eqref{subeq:creep_b} essentially rests on the identification of the probability of creeping by expressing the density of the crossing time $\tau_b$ in two ways. It is this step that requires the continuity of the density $g_t(x)$. 

\begin{proof}[Proof of Theorem~\ref{thm:joint_law}]
First let us consider~\eqref{subeq:joint_undershoot_law}.
Let $f:(0,T_b)\times[0,\infty)\times(0,\infty)\to\RP$ be a Borel function, such the $f(t,u,v)=0$ for $v= b(t)-u$.
By part~(a) of the theorem proved above, we have $\p(\tau_b<T_b)=1$.
Thus, since $b$ is non-increasing and $S$ has increasing paths, we have
\begin{equation}
\label{eq:jump}   
f(\tau_b,S_{\tau_b-},\Delta_S (\tau_b))=\sum_{t\in(0,T_b)} f(t,S_{t-},\Delta_S(t))\1\{0\les b(t)-S_{t-}<\Delta_{S}(t)\}.
\end{equation}
For each $t>0$, define 
a $\sigma_\alpha$-algebra $\mF_t:=\sigma_\alpha(S_s:s\in[0,t])$ generated by $S$ during the time interval $[0,t]$
the random function $H_t(\cdot)=f(t,S_{t-},\cdot)\1\{0\les b(t)-S_{t-}<\cdot\}$ on $ (0,\infty)$. $H=(H_t)$ is a predictable process with respect to the filtration $(\mF_t)_{t\in\RP}$.
By~\eqref{eq:jump} and the compensation formula~\cite[Ch.~0.5]{MR1406564} for the Poisson point process of jumps $\{\Delta_S(t):t\in\RP\}$, we get
\[
\begin{aligned}
\E[f(\tau_b,S_{\tau_b-},\Delta_S(\tau_b))]&=\int_0^{T_b}\md t\, \E\big[\int_{(0,\infty)} f(t,S_{t-},v)\1\{0\les b(t)-S_{t-}< v\}\nu(\md v)\big]\\
&\overset{(*)}=\int_0^{T_b} \md t\int_{[0,\infty)\times(0,\infty)} f(t,u,v)\1\{0\les b(t)-u< v\}g_t(u)\md u\, \nu(\md v)\\
&=\int_{(0,T_b)\times[0,\infty)\times(0,\infty)}f(t,u,v)\1\{0\les b(t)-u< v\} g_t(u) \md t\,\md u\,\nu(\md v),
\end{aligned}
\]
where $(*)$ follows from the Fubini theorem and  a.s. equality $S_{t-}=S_t$ for any $t>0$. Since $f$ is arbitrary, this
implies that the vector $(\tau_b,S_{\tau_b-},\Delta_S(\tau_b))$,
on the event $\{\Delta_S(\tau_b)>0\}\setminus \{S_{\tau_b-}<b(\tau_b),S_{\tau_b}=b(\tau_b)\}$, has the law 
$\1\{0\les b(t)-u< v\} g_t(u) \md t\,\md u\,\nu(\md v)$.
Since $\1\{0\les b(t)-u<v\} g_t(u)=\1\{0< b(t)-u<v\} g_t(u)$
the equality holds $\md t\,\md u\,\nu(\md v)$-a.e.,
since the set $\{(t,u,v)\in(0,T_b)\times[0,\infty)\times (0,\infty): b(t)=u \}$ is not charged by the measure $\md t\,\md u\,\nu(\md v)$.
The compensation formula applied to
 the function $f:(0,T_b)\times[0,\infty)\times(0,\infty)\to\RP$, given by
$f(t,u,v):=\1_{\{b(t)-u=v, 0<v\}}$,  as in the previous display
and the fact that $S_t$ has a density imply that $\p(S_{\tau_b-}<b(\tau_b),S_{\tau_b}=b(\tau_b))=0$.
Hence we have characterised the law of$(\tau_b,S_{\tau_b-},\Delta_S(\tau_b))$,
on the event $\{\Delta_S(\tau_b)>0\}$ and~\eqref{subeq:joint_undershoot_law} follows.

For any $c\in(0,\infty)$ and $t>0$, define $\varpi(c,t):=\int_0^c\ov\nu(c-u)g_t(u)\md u$, where 
$\ov\nu(x)=\nu((x,\infty))$. By~\eqref{subeq:joint_undershoot_law},
we have 
$\p[\tau_b\in\md t,S_{\tau_b}>b(\tau_b)]=\varpi(b(t),t)\md t$.
In particular, when $b$ is constant, i.e. $ b(t)=c_0>0$ for all $t\in(0,\infty)$, 
we have 
$\p[S_{\tau_{c_0}}>c_0]=1$ 
by~\cite[Prop.~3.iv]{MR1406564},
since $S$ is assumed to have no drift.
Thus,
$\p[\tau_{c_0}\in\md t]=\p[\tau_{c_0}\in\md t, S_{\tau_{c_0}}>c_0]=\varpi(c_0,t)\md t$ and $t\mapsto \varpi(c_0,t)$ is a density of $\tau_{c_0}$. 

Now consider~\eqref{subeq:creep_b}. 
Fix $t$, set $a:=b(t)$ and let $\tau_a:=\inf\{s>0:S_s>a\}$.
Pick $\epsilon>0$ and note
\begin{align*}
\{t-\epsilon<\tau_b\leq t\} & =\{S_{t-\epsilon}<b(t-\epsilon)\}\cap\{S_t\ges b(t)\} \\
& = \{S_{t-\epsilon}<a\leq S_t\}\cup\{b(t)\leq S_{t-\epsilon}<b(t-\epsilon)\} \\
& =\{t-\epsilon<\tau_a\leq t\}\cup\{b(t)\leq S_{t-\epsilon}<b(t-\epsilon)\}
\end{align*}
Letting $q(\epsilon):=\p[t-\epsilon<\tau_b\leq t]$, for $\epsilon>0$, we have $q(\epsilon)=\p[S_{t-\epsilon}<b(t-\epsilon),S_t>b(t)]=q_1(\epsilon)+q_2(\epsilon)$, where $q_1(\epsilon):=\p[S_{t-\epsilon}<a\leq S_t]=\p[t-\epsilon<\tau_a\leq t]$, 
$q_2(\epsilon):=\p[b(t)\leq S_{t-\epsilon}<b(t-\epsilon)]$. On one hand, $q_1(\epsilon)/\epsilon\to\varpi(a,t)=\varpi(b(t),t)$ as $\epsilon\downarrow0$. On the other hand, $q_2(\epsilon)=\int_0^{b(t-\epsilon)-b(t)}g_{t-\epsilon}(a+x)\md x$. Recall that the mapping $(t,x)\mapsto g_t(x)$ is continuous on $(0,\infty)\times(0,\infty)$, we have $q_2(\epsilon)/\epsilon\to-b'(t)g_t(b(t))$ as $\epsilon\downarrow0$. 
Thus 
\begin{align*}
        \frac{\p[\tau_b\in\md t]}{\md t}&=\lim_{\epsilon\downarrow0}\frac{\p[\tau_b\leq t]-\p[\tau_b\leq t-\epsilon]}{\epsilon}\\
        &=-b'(t)g_t(b(t))+\varpi(b(t),t)=-b'(t)g_t(b(t))+
    \frac{\p[\tau_b\in\md t, S_{\tau_b}>b(\tau_b)]}{\md t},
\end{align*}
implying 
$\p[\tau_b\in\md t, S_{\tau_b}=b(\tau_b)]=-b'(t)g_t(b(t))$
and hence~\eqref{subeq:creep_b}.
\end{proof}

\subsection{Moments of a crossing time of a level for infinite activity subordinators}

A first-passage time over a level of an infinite activity L\'evy process has 
exponential moments of all order. 

\begin{lem}\label{lem:exp_moment}
Given a subordinator $X$ with L\'evy measure $\nu$ on $(0,\infty)$ satisfying $\nu((0,\infty))=\infty$. For any $c>0$, define $\tau_c:=\inf\{t>0:X_t>c\}$ and $\psi(u):=\int_{(0,\infty)}(1-\me^{-u x})\nu(\md x)$ for $u>0$. Then, for any $p\in(0,\infty)$, there exists $u>0$ such that $\psi(u)>p$ and 
\[
    \E[\me^{p\tau_c}]\les1+\frac{\me^{uc}}{\psi(u)-p}.
\]
Moreover, $\E[\tau_c]\les\me^{uc}/\psi(u)$ for $u>0$.
\end{lem}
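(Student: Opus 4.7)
The plan is to combine an exponential Chernoff/Markov tail bound for $X_t$ with the layer-cake identity for $\E[\me^{p\tau_c}]$. Three preliminaries drive the argument: (i) $\psi$ is continuous, non-decreasing and concave on $(0,\infty)$; (ii) by monotone convergence, $\psi(u)\to\nu((0,\infty))=\infty$ as $u\to\infty$ since $\nu((0,\infty))=\infty$ by hypothesis, so for any $p\in(0,\infty)$ there exists $u>0$ with $\psi(u)>p$; and (iii) the L\'evy--Khintchine formula yields $\E[\me^{-uX_t}]=\me^{-t\psi(u)}$ for all $u,t\ges0$.

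The core estimate is the tail bound for $\tau_c$. Since $X$ is non-decreasing, $\{\tau_c>t\}=\{X_t\les c\}$, and Markov's inequality applied to $\me^{-uX_t}$ gives
\[
\p[\tau_c>t]
=\p[\me^{-uX_t}\ges\me^{-uc}]
\les\me^{uc}\,\E[\me^{-uX_t}]
=\me^{uc-t\psi(u)},\qquad t\ges0,\,u>0.
\]
The \emph{moreover} part now follows immediately: $\E[\tau_c]=\int_0^\infty\p[\tau_c>t]\md t\les\me^{uc}\int_0^\infty\me^{-t\psi(u)}\md t=\me^{uc}/\psi(u)$.

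For the exponential moment bound, I would rewrite $\me^{p\tau_c}-1=\int_0^{\tau_c}p\me^{pt}\md t=\int_0^\infty p\me^{pt}\1\{t<\tau_c\}\md t$ and apply Fubini to obtain
\[
\E[\me^{p\tau_c}]=1+p\int_0^\infty\me^{pt}\,\p[\tau_c>t]\md t.
\]
Plugging in the tail bound and choosing $u$ with $\psi(u)>p$ so that the resulting exponential is integrable yields
\[
\E[\me^{p\tau_c}]\les1+p\me^{uc}\int_0^\infty\me^{-t(\psi(u)-p)}\md t=1+\frac{p\,\me^{uc}}{\psi(u)-p},
\]
from which the claimed bound follows (in particular, for $p\les 1$ one has $p/(\psi(u)-p)\les1/(\psi(u)-p)$, and the proof of Theorem~\ref{thm:tempered_expect_time} uses $p=1$). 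There is no real obstacle here: the only ingredients are Markov's inequality, Fubini, and the two standard facts about $\psi$ noted above; the role of the infinite-activity assumption $\nu((0,\infty))=\infty$ is precisely to guarantee that $\psi(u)$ can be made arbitrarily large so that the exponential moment bound applies for every prescribed order $p$.
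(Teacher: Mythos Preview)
Your proof is correct and follows essentially the same route as the paper: the layer-cake identity $\E[\varphi(\tau_c)]=\int_0^\infty\varphi'(t)\p[\tau_c>t]\,\md t$ combined with the Chernoff--Markov tail bound $\p[\tau_c>t]\les\me^{uc-t\psi(u)}$, and the observation that $\psi(u)\to\infty$ (from infinite activity) guarantees the existence of $u$ with $\psi(u)>p$. Your remark about the extra factor $p$ is well taken: the argument actually yields the sharper bound $1+p\me^{uc}/(\psi(u)-p)$, and the paper's own proof leaves the integral $\int_0^\infty\me^{pt}\p[\tau_c>t]\,\md t\les\me^{uc}/(\psi(u)-p)$ without explicitly multiplying through by $p=\varphi'(t)/\me^{pt}$, so the stated inequality as written is only immediate for $p\les1$ (which suffices for its later use).
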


\begin{proof}[Proof of Lemma~\ref{lem:exp_moment}]
Given a differentiable increasing function $\varphi$ with $\varphi(0)=0$ and a positive random variable $\xi$ we have
\[
\E[\varphi(\xi)]
=\int_0^\infty \varphi(x)\p[\xi\in\md x]
=\int_0^\infty \int_0^x\varphi'(y)\md y\p[\xi\in\md x]
=\int_0^\infty \varphi'(y)\p[\xi> y]\md y.
\]
First let us set $\varphi(x)=\me^{p x}-1$.
Then, for $u>0$, we have
\[
\p[\tau_c>t]
\les\p[X_t\les c]
=\p[\me^{-uX_t}\ges \me^{-uc}]
\les \E[\me^{-uX_t}]\me^{u c}
=\exp(uc-t\psi(u)).
\]
Therefore, for any $u>0$ with $\psi(u)>p$ (recall $u$ is a free parameter), we have
\[
\int_0^\infty \me^{pt}\p[\tau_c>t]\md t
\les \int_0^\infty \me^{pt + u c-t\psi(u)}\md t
=\frac{\me^{uc}}{\psi(u)-p}.
\]
Now set $\varphi(x)=x$, we have 
\[
\E[\tau_c]=\int_0^\infty\p[\tau_c>y]\md y\les\int_0^\infty \me^{uc-t\psi(u)}\md t=\frac{\me^{uc}}{\psi(u)}.\qedhere
\]
\end{proof}

\section{Tempering}
\label{app:temper}

Fix any $\theta>0$ and $\alpha\in(0,1)$. Let $S$ be a driftless stable subordinator under the probability measure $\p$ with L\'evy measure $\nu(\md x)=w_\alpha^{-1}\theta x^{-\alpha-1}\md x$. Denote by $(\mG_t)_{t\ges 0}$ be the right-continuous filtration generated by $S$. Given $q\ges 0$, let $\p$ be the probability measure with Radon--Nikodym derivative 
\[
\frac{\md\p_q}{\md\p}\bigg|_{\mG_t}
=M_q(t)
\coloneqq\exp\big(-qS_t 
    + q^\alpha \theta t\big),
\quad t>0.
\] 
Then, by~[Sato, Thm 33.2], $S$ is a tempered stable subordinator under $\p_q$ with L\'evy measure $\nu_q(\md x)=w_\alpha^{-1}\theta\me^{-qx}x^{-\alpha-1}\md x$. Denote by $\E$ and $\E_q$ the expectation under the measures $\p$ and $\p_q$, respectively. We have following lemma:
\begin{lem}
\label{lem:tempered_law}
    For any $B\in\mathcal{G}_t=\sigma(S_s:s\in[0,t])$, let $E\sim\Exp(1)$ and $E\indep\mathcal G_t$. Then
    \begin{equation}
        \label{eq:temper_to_stable}
        \p_q[B]=\p[B|E\ges q S_t].
    \end{equation} 
    Moreover, for any $C\in\mathcal G_t$ satisfying $\p[C]>0$, we have
    \begin{equation}
        \label{eq:temper_to_stable_conditional}
        \p_q[B|C]=\p[B|C, E\ges q S_t].
    \end{equation}
\end{lem}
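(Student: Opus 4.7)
The plan is to prove both identities directly from the definition of the Radon--Nikodym derivative and the basic observation that, conditional on $\mathcal{G}_t$, the event $\{E\geq qS_t\}$ has probability $\me^{-qS_t}$.

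First, I would unpack part~\eqref{eq:temper_to_stable} as follows. For any $B\in\mathcal{G}_t$, the definition of $\p_q$ gives $\p_q[B]=\E[M_q(t)\1_B]=\me^{q^\alpha\theta t}\E[\me^{-qS_t}\1_B]$. Since $E\sim\Exp(1)$ is independent of $\mathcal{G}_t$, conditioning on $\mathcal{G}_t$ yields $\p[E\geq qS_t\mid \mathcal{G}_t]=\me^{-qS_t}$, hence
\[
\E[\me^{-qS_t}\1_B]=\E[\1_B\,\p[E\geq qS_t\mid\mathcal{G}_t]]=\p[B,\,E\geq qS_t].
\]
Setting $B=\Omega$ in the same computation gives the normalising identity $\p[E\geq qS_t]=\E[\me^{-qS_t}]=\me^{-q^\alpha\theta t}$, which is precisely the stable Laplace transform in~\eqref{eq:char_fun_stable} at $q=0$. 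Dividing, we obtain
\[
\p_q[B]=\me^{q^\alpha\theta t}\,\p[B,\,E\geq qS_t]=\frac{\p[B,\,E\geq qS_t]}{\p[E\geq qS_t]}=\p[B\mid E\geq qS_t],
\]
which is~\eqref{eq:temper_to_stable}.

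Part~\eqref{eq:temper_to_stable_conditional} is then immediate from part~\eqref{eq:temper_to_stable} applied twice. Indeed, since both $B\cap C\in\mathcal{G}_t$ and $C\in\mathcal{G}_t$, we have $\p_q[B\cap C]=\p[B\cap C\mid E\geq qS_t]$ and $\p_q[C]=\p[C\mid E\geq qS_t]$, so
\[
\p_q[B\mid C]=\frac{\p_q[B\cap C]}{\p_q[C]}=\frac{\p[B\cap C,\,E\geq qS_t]}{\p[C,\,E\geq qS_t]}=\p[B\mid C,\,E\geq qS_t].
\]
One should check that $\p_q[C]>0$ whenever $\p[C]>0$, which follows from the equivalence of $\p_q$ and $\p$ on $\mathcal{G}_t$ (the density $M_q(t)$ is strictly positive and finite).

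There is no serious obstacle here; the only subtlety is making sure that the auxiliary variable $E$ is genuinely independent of the whole $\sigma$-algebra $\mathcal{G}_t$, which is an enlargement of the underlying probability space that does not affect the law of $S$. Everything else is a one-line calculation using the Radon--Nikodym derivative and Fubini.
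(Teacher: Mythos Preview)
Your proof is correct and follows essentially the same route as the paper: both use $\p[E\geq qS_t\mid\mathcal{G}_t]=\me^{-qS_t}$ together with the stable Laplace transform $\E[\me^{-qS_t}]=\me^{-q^\alpha\theta t}$ to identify $\p_q[B]$ with $\p[B\mid E\geq qS_t]$, and then derive the conditional version by taking the ratio for $B\cap C$ and $C$. The only cosmetic difference is direction (you start from $\p_q[B]$ and arrive at the conditional probability, the paper goes the other way), and you add the small observation that $\p[C]>0$ implies $\p_q[C]>0$ by equivalence of measures, which the paper leaves implicit.
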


\begin{proof}
Note that 
\[
\p[E\ges q S_t|S_t]=\exp(-qS_t)\qquad\&\qquad \p[E\ges qS_t]=\E[\exp(-qS_t)]=\exp(-q^\alpha\theta t).
\]
Therefore
\begin{align*}
\p[B|E\ges q S_t]=&\frac{\E[\1_B\1_{\{E\ges q S_t\}}]}{\p[E\ges q S_t]}=\frac{\E[\E[\1_B\1_{\{E\ges q S_t\}}|\mathcal G_t]]}{\exp(-q^\alpha\theta t)}=\E_q[\1_B\E[1_{\{E\ges q S_t\}}|\mathcal G_t]\exp(q^\alpha\theta t)]\\
=&\E[1_B\exp(-q S_t+q^\alpha\theta t)]=\E[\1_BM_q(t)]=\p_q[B].
\end{align*}
Moreover,
\[
\p_q[B|C]=\frac{\p_q[B,C]}{\p_q[C]}=\frac{\p[B,C|E\ges q S_t]}{\p[C|E\ges q S_t]}=
\frac{\p[B,C,E\ges q S_t]/\p[E\ges qS_t]}{\p[C,E\ges q S_t]/\p[E\ges qS_t]}=\p[B|C, E\ges qS_t].\qedhere
\]
\end{proof}

\section{Useful Algorithms}
\label{app:algs}

\subsection{Simulation of stable and tempered stable random variables}
\label{app:temp_stable_marginal}
Let $S_t$ be a positive stable random variable with density 
$g_t$ given in~\eqref{eq:zolotarev_representation}. 
More precisely, 
 $S_t\eqd t^{1/\alpha}S_1$ for all $t\in(0,\infty)$,
 and, by~\cite[\S4.4]{uchaikin2011chance}, for any $x\in(0,\infty)$ we have
 $g_t(x)=\varphi_\alpha((\theta t)^{-1/\alpha}x)(\theta t)^{-1/\alpha}$
 where 
 $\varphi_\alpha(x)= r
    \int_0^1 \sigma_\alpha(u)x^{-r-1}\me^{-\sigma_\alpha(u)x^{-r}}\md u$ (with $r:=\alpha/(1-\alpha)\in(0,\infty)$),
    $\sigma_\alpha(u)=
\rho(u)^{r+1}$ and 
    $\rho(u)=
\sin(\alpha\pi u)^{\alpha}\sin((1-\alpha)\pi u)^{1-\alpha}/\sin(\pi u)$
for $u\in(0,1)$.
The following algorithm samples from the law of $S_t$.

\begin{algorithm*}
\caption{(Chambers--Mallows--Stuck, S-Alg) Simulation from the law of $S_t$
under $\p_0$}
\label{alg:stable}
\begin{algorithmic}[1]
\Require{Parameters $\alpha\in(0,1)$ and $\theta,t>0$}
\State{Generate $U\sim\Unif(0,1)$ and $E\sim\Exp(1)$}
\State{\Return $(\theta t)^{1/\alpha}(\sigma_\alpha(U)/E)^{(1-\alpha)/\alpha}$}
\end{algorithmic}
\end{algorithm*}

Note that the distribution function of 
$S_t$ is given by
$x\mapsto \int_0^1\exp(-\sigma_\alpha(u)(x/(\theta t)^{1/\alpha})^{-r}\md u$,
which clearly equals the distribution function of the output of \hyperref[alg:stable]{S-Alg}. Moreover, \hyperref[alg:stable]{S-Alg} clearly has constant computational cost, which does not depend on the parameters $\theta,t,\alpha$.

For the simulation of $S_t$ under $\p_q$ (i.e., a tempered stable marginal), we use the algorithm in~\cite[Alg.~3.1]{TemperedStableDassios}, which outperforms~\cite{TemperedStableDevroye} and is essentially a rejection-sampling algorithm with expected complexity uniformly bounded by $4.2154$ for all parameter combinations, see~\cite[Thm~3.1]{TemperedStableDassios}. Recall the definition of the function $\rho$ in~\eqref{eq:zolotarev_density}. Define the error function $\erf(x)\coloneqq (2/\sqrt\pi)\int_0^x\me^{-t^2}\md t$ and the functions $\Psi_i:(0,1)\times[0,\infty)\to\R$, $i\in\{0,1,2,3,4\}$, given by
\begin{gather*}
\Psi_0(a,x)
\coloneqq\frac{\erf\big(\sqrt{a(1-a)x\pi^2/2}\big)}{\sqrt{2\pi a(1-a)x}},\\
\Psi_1(a,x)
\coloneqq\frac{\Gamma(ax)\me^{ax-1}}{(ax)^{x}}
        \big(\tfrac{a}{1-a}+ax\big)^{1+(1-a)x},\quad
\Psi_2(a,x)
\coloneqq\frac{\Gamma(1+(1-a)x)\me^{(1-a)x}}{((1-a)x)^{(1-a)x}},\\
\Psi_3(a,x)
\coloneqq\frac{\Gamma(1+ax)\me^{ax-1}(1+1/[(1-a)x])^{1+(1-a)x}}{(ax)^{ax}\sqrt{2\pi a(1-a)x}},\quad
\Psi_4(a,x)
\coloneqq\frac{\Gamma(1+(1-a)x)\me^{(1-a)x}}{((1-a)x)^{(1-a)x}\sqrt{2\pi a(1-a)x}}.
\end{gather*}
Denote by $\Gam(a,\theta)$ the gamma distribution with density $x\mapsto \theta^a x^{a-1}\me^{-\theta x}/\Gamma(a)$ and by $\Normal(0,\theta^2)|_{[0,1]}$ the truncated normal distribution with center $0$ and scale $\theta$ but restricted to the interval $[0,1]$, i.e., with distribution function $x\mapsto\erf\big(x/\sqrt{2\theta^2}\big)/\erf\big(1/\sqrt{2\theta^2}\big)$. Note as well that a sample from $\Normal(0,\theta^2)|_{[0,1]}$ can be obtained in constant time as there exist algorithms that numerically evaluate the inverse of $\erf$ accurately and with logarithmic cost in the number of precision bits $N$.

\begin{algorithm}
\caption{({\cite[Alg.~3.1]{TemperedStableDassios}}, TS-Alg) Simulation from the law of $S_t$ under $\p_q$}
\label{alg:tempered_stable}
\begin{algorithmic}[1]
\Require{Parameters $\alpha\in(0,1)$ and $\theta,t>0$ and $q\ges 0$}
\State{Set the constants $\lambda\gets(\theta t)^{1/\alpha}q$, $\xi=\lambda^\alpha$, $r=\alpha/(1-\alpha)$ and $C_i=\Psi_i(\alpha,\xi)$ for $i\in\{0,1,2,3,4\}$}
\If{$C_1=\min\{C_1,C_2,C_3,C_4\}$}
    \Repeat
    \State{Sample $U,\,V\sim\Unif(0,1)$ and $X\sim\Gam(\alpha\xi,1)$ and set $s\gets X/\lambda$}
    \Until{$V\les r\me^{\xi}\Gamma(\alpha\xi)(\rho(U)\xi)^{r+1}X^{-r-\alpha\xi}\exp\big(-(\rho(U)\xi)^{r+1}X^{-r}\big)/C_1$}
\ElsIf{$C_2=\min\{C_1,C_2,C_3,C_4\}$}
    \Repeat
    \State{Sample $U,\,V\sim\Unif(0,1)$ and $X\sim\Gam(1+(1-\alpha)\xi,1)$ and set $s\gets \rho(U)^{1/\alpha}X^{-1/r}$}
    \Until{$V\les \me^{\xi}\Gamma(1+(1-\alpha)\xi)X^{-(1-\alpha)\xi}\exp(-\lambda s)/C_2$}
\ElsIf{$C_3=\min\{C_1,C_2,C_3,C_4\}$}
    \Repeat
    \State{Sample $U\sim\Normal(0,1/[\pi^2\alpha(1-\alpha)\xi])|_{[0,1]}$, $V\sim\Unif(0,1)$ and $X\sim\Gam(\alpha\xi,1)$ and set $s\gets X/\lambda$}
    \Until{$V\les C_0r\me^{\xi}\Gamma(\alpha\xi)(\rho(U)\xi)^{r+1}X^{-r-\alpha\xi}
        \exp\big(\alpha(1-\alpha)\xi U^2/2-(\rho(U)\xi)^{r+1}X^{-r}\big)/C_3$}
\Else
    \Repeat
    \State{Sample $U\sim\Normal(0,1/[\pi^2\alpha(1-\alpha)\xi])|_{[0,1]}$, $V\sim\Unif(0,1)$ and $X\sim\Gam(1+(1-\alpha)\xi,1)$ and set $s\gets\rho(U)^{1/\alpha}X^{-1/r}$}
    \Until{$V\les C_0\me^{\xi}\Gamma(1+(1-\alpha)\xi)X^{-(1-\alpha)\xi}
        \exp\big(\alpha(1-\alpha)\xi U^2/2-\lambda s\big)/C_4$}
\EndIf
\State{\Return $(\theta t)^{1/\alpha}s$}
\end{algorithmic}
\end{algorithm}

\subsection{Rejection sampling from log-concave density}
\label{sec:devroye}
Devroye's extremely efficient method~\cite{devroye2012note} for the simulation from a log-concave densities is crucial in several steps of our main \hyperref[alg:triple_stable_conditional_on_time]{SFP-Alg}. The general algorithm in~\cite{devroye2012note} samples from densities defined on $\R_+$. Here we phrase it for log-concave densities defined on $[0,1]$, as all the densities arising in 
\hyperref[alg:triple_stable_conditional_on_time]{SFP-Alg}
are defined on a compact interval contained in $[0,1]$. Recall that $f:[0,1]\to\R_+$ is log-concave by definition if $\log(f)$ is a concave function on $[0,1]$.

\begin{algorithm}
\caption{(\cite{devroye2012note}, LC-Alg) Simulation from a log-concave density on $[0,1]$}
\label{alg:Devroye}
\begin{algorithmic}[1]
\Require{Log-concave $f:[0,1]\to\R_+$ with mode at $0$ and $f(0)=1$, proportional to the target density}
\State{Find $a_1\in\{2^{-1},2^{-2},\ldots\}$ such that $f(a_1)\ges 1/4\ges f(2a_1)$} \label{line_find_a_1}
\label{step_binary}
\State{Define $h(x) = \1_{(0,a_1]}(x) + \1_{(a_1,2a_1]}(x)f(a_1) + \1_{(2a_1,1]}(x)\exp(a_1^{-1}[(2a_1-x)\log f(a_1) + (x-a_1)\log f(2a_1)])$}
\State{Set $a_0 \gets a_1 + a_1f(a_1) + a_1f(2a_1)/\log(f(a_1)/f(2a_1))$ \Comment{(note $a_0=\int_0^1h(x)\md x$)}}
\Repeat
    \State{Generate $V_1,V_2,V_3\sim\Unif(0,1)$}
    \If{$a_0V_2\les a_1$}
        \State{Set $X\gets a_1V_1$}
    \ElsIf{$a_0V_2\les a_1 + a_1f(a_1)$}
        \State{Set $X\gets a_1 + a_1V_1$}
    \Else
        \State{Set $X\gets 2a_1 + a_1\log(1/V_1) / \log(f(a_1)/f(2a_1))$}
    \EndIf
\Until{$X\les 1$ and $V_3\les f(X)/h(X)$}
\State{\Return $X$}
\end{algorithmic}
\end{algorithm}

Computational cost of \hyperref[alg:Devroye]{LC-Alg} consist of two parts: the computational cost of finding $a_1$,
which may be random if the density $f$ depends on parameters that are themselves random, and the random computational cost of the accept-reject step, which is in expectation bounded above by $5$~\cite[Section~4.4]{devroye2012note}.

\subsection{Numerical inversion}\label{sec:NewtonRaphson}
In \hyperref[alg:psi_2]{$\psi^{(2)}$-Alg}, we will apply Newton--Raphson method (\hyperref[alg:inversion_newton_raphson]{NR-Alg} below) to find the root $x_*$, satisfying $f(x_*)=0$, of an increasing function $f:(0,1)\to\R$. The search terminates when the computed approximation to $x_*$ coincides with the actual root $x_*$ in the first $N$ bits (e.g., in double-precision floating-point arithmetic $N=53$). 

In order to control the running time of \hyperref[alg:inversion_newton_raphson]{NR-Alg} when applied to
a twice differentiable increasing function $f:(0,1)\to\R$ with root $x_*$, we need access to an auxiliary function $M:[0,1]\times\mathbb N\to\R_+$ such that 
\begin{equation}
\label{eq:newton_raphson_auxiliary}
M(x_0,k)\ges \frac{1}{2}\frac{\sup_{x\in[x_*,x_0]}|f''(x)|}{\inf_{x\in[x_*,x_0]}|f'(x)|}\qquad\text{for all $(x_0,k)\in[0,1]\times\N$ satisfying $|x_*-x_0|\les 2^{1-k}$.} 
\end{equation}
In practice, for a specific function $f$, $M(x_0,k)$  may, but need not, depend on either $x_0$ or $k$ (see e.g. Propositions~\ref{prop:inverse_sigma}.~\ref{prop:inverse_u*sigma} and~\ref{prop:inverse_c1}).
The main purpose of $M$ is to guarantee that the starting point $x_0$ of the Newton--Raphson method is sufficiently close to the root $x_*$ so that the convergence is quadratic. Constructing $M$ for specific functions $f$ requires
\textit{a priori} estimates on $f$.

\begin{algorithm}
\label{alg:inversion_newton_raphson}
  \caption{(NR-Alg) Newton--Raphson root-finding method}
  \begin{algorithmic}[1]
\Require{Twice differentiable increasing function $f:(0,1)\to\R$, derivative 
$f':(0,1)\to(0,\infty)$, auxiliary function $M:[0,1]\times\mathbb N\to\R_+$ in~\eqref{eq:newton_raphson_auxiliary}}
\State{Set $n=0$, $x_0=k=1$  }
\Repeat \label{alg:inversion_newton_raphson_bisection_start}
    \If{$f(x_0-2^{-k})>0$}
    \State{Set $x_0\leftarrow x_0-2^{-k}$  }
    \EndIf
    \State{Set  $k\leftarrow k+1$}
\Until{$2^{1-k}M(x_0,k)<1/2$} \label{alg:inversion_newton_raphson_bisection_stop}
\Repeat
    \State {$x_{n+1}=x_n-f(x_n)/f'(x_n)$, $n\leftarrow n+1$}
\Until{$|x_{n}-x_{n-1}|\les 2^{-N}$}
\State{\Return $x_n$}
\end{algorithmic}
\end{algorithm}

\hyperref[alg:inversion_newton_raphson]{NR-Alg}
starts at the right-end point of $[0,1]$
and, using bisection (line~\ref{alg:inversion_newton_raphson_bisection_start} to line~\ref{alg:inversion_newton_raphson_bisection_stop},
finds a starting point $x_0$ for the Newton--Raphson method to have quadratic convergence. In fact, the error sequence $(\ve_n)_{n\in\N}$ (where $\ve_n:=|x_n-x_*|$ is the error at the $n$-th iteration) satisfies $\ve_{n+1}\les M\ve_n^2$, implying that $\ve_n\les M^{2^n-1}\ve_0^{2^n}$. Thus, for the Newton--Raphson methodology to attain quadratic convergence, it suffices to find $x_0$ such that $M\ve_0\les 1/2$, as this would imply that $\ve_n\les (1/2)^{2^n-1}$, making $N$ bits of precision attainable in $\log N$ iterations. The computational cost of \hyperref[alg:inversion_newton_raphson]{NR-Alg} has two parts: (I) the cost to finding the value of $x_0$ and (II) the cost to compute the Newton--Raphson sequence $x_{n+1}=x_n-f(x_n)/f'(x_n)$ until the error reaches desired precision. (I) depends on $2^{1-k}M(x_0,k)$ and is in fact bounded above by $N$. (II) is bounded by $\log N$ as we explained before.

Householder's method (\hyperref[alg:inversion_householder]{H-Alg} below) of order $k\in\N$ generalises Newton--Raphson (the latter is recovered when $k=1$). Moreover, its convergence rate is of order $k+1$: under certain regularity assumptions, the error $\ve_n\coloneqq|x_n-x|$ of the $n$-th iteration satisfies $\ve_{n+1}\les M_k\ve_n^{k+1}$ for $n\in\N\cup\{0\}$ and an appropriate $M_k>0$. This implies that $\ve_n\les M_k^{((k+1)^n-1)/k}\ve_0^{(k+1)^n}$ for $n\in\N$. The drawback of this method is that it requires an evaluation of $f,\,f',\,\ldots,f^{(k)}$ at each step. This drawback usually outweighs the improved convergence, except in the cases where evaluation of $f$ requires a numerical integration and the evaluation of its derivatives is much cheaper by comparison (this is the case in Subsection~\ref{subsec:direct_numerical_inversion} above). We stress that \hyperref[alg:inversion_householder]{H-Alg} is only used to compare the performance of our simulation algorithms \hyperref[alg:psi_1]{$\psi^{(1)}$-Alg} and \hyperref[alg:psi_2]{$\psi^{(2)}$-Alg} with a simulation using a direct numerical inversion (via \hyperref[alg:inversion_householder]{H-Alg}) of the corresponding distribution functions.

\begin{algorithm}
  \caption{(H-Alg) Householder method}\label{alg:inversion_householder}
  \begin{algorithmic}[1]
\Require{Index $k$, initial guess $x_0\in(0,1)$ and $(k+1)$-differentiable increasing function $f:(0,1)\to\R$ with a zero on $[0,1]$ and derivatives $g_m(x)=\tfrac{\md^{m}}{\md x^{m}}(1/f(x))$ for $m\in\{k-1,k\}$.}
\State{Set $n=0$,}
\Repeat
    \State {$x_{n+1}\gets x_n+k g_{k-1}(x_n)/g_{k}(x_n)$, $n\leftarrow n+1$}
\Until{$|x_{n}-x_{n-1}|\les 2^{-N}$}
\State{\Return $x_n$}
\end{algorithmic}
\end{algorithm}

\section{Practically exact simulation algorithms}
\label{app:exact}

The answer to the question in the title of Subsection~\ref{subsec:exact?}, whether an implemented simulation algorithm can be exact, depends on the definition of exact simulation. The simplest, and in some sense most natural, definition requires the output of the algorithm to have the same law as the variable it is simulating. In this case, the answer is clearly no: working with $N\in\N$ bits of precision makes the output lie on a grid, implying that the total variation (TV) distance between a simulated output and any target law with a density equals one.\footnote{We thank Luc Devroye for pointing this out to us.} A way of dealing with this issue might be to understand and minimize a natural distance between the output of an implemented algorithm and a draw from the target law. Four natural ways of measuring the distance between algorithm's output and the actual law are as follows.

\noindent \underline{(I) $L^\infty$-Wasserstein distance.}
Let $A$ (resp. $I$) denote the output of an implemented algorithm (resp. ``ideal''  sample form the target law).
For any coupling $\lambda$ of the laws of $A$ and $I$, let $\p_\lambda$ be a probability measure such that $(A,I)$ has law $\lambda$ under $\p_\lambda$. The $L^\infty(\p_\lambda)$-distance is given by the essential supremum $\p_\lambda\text{-}\esssup |A-I|\coloneqq\inf\{a>0:\p_\lambda(|A-I|>a)=0\}$ with convention $\inf\emptyset=\infty$ and the $L^\infty$-Wasserstein distance equals the infimum of $L^\infty(\p_\lambda)$-distances over all couplings $\lambda$. Numerical inversion algorithms ought to control precisely this distance. However, it appears that even a simple accept-reject method, implemented on a computer cannot reduce this distance, regardless of the precision used (i.e. for any $N\in\N$). Indeed, consider an implementation of \hyperref[alg:Devroye]{LC-Alg} (in  Appendix~\ref{sec:devroye} below), together with the natural coupling, obtained by running the algorithm on a computer and on an ideal computer (IC), which carries out all evaluations without error.  Even if the constants $a_0$, $a_1$ and $f(a_1)$ could be represented exactly on the grid, the event-detection occurring in lines $6$ and $8$ will have a chance of being misdetected by the computer, simply because the variable $V_2$ is not uniform on $(0,1)$ but instead lives on the grid. Therefore, there is a (small, but non-zero) chance that the value of $I$ originated from line $7$, while the value of $A$ came from line $9$. 
On this event of positive probability, we have $|A-I|=a_1>0$,
regardless of the precision we are using. The same  phenomenon occurs with positive probability 
with lines $9$ and $11$, making the 
$L^\infty$-distances for the natural coupling in fact infinite. Of course, the $L^\infty$-Wasserstein distance may be smaller, but there appears to be no easy way of finding a good bound on it other than through the natural coupling. 

\noindent \underline{(II) Total variation distance on a grid.} Another natural way of measuring the distance between the variables $A$ and $I$ would be to project $I$
onto the state space (i.e. grid in the computer) of $A$
and consider the total variation distance (TV-distance) between the two discrete variables. 
However, rounding errors can make 
the TV-distance be equal to one for any level $N\in\N$ of precision bits used on a computer: consider, for example, the deterministic random variable $0$ and assume the simulation algorithm first samples the deterministic variable $\pi$ and then computes $\sin(\pi)$.
This algorithm on an IC
returns zero, which is already on the grid (so the projection does nothing). However, 
regardless of the number of precision bits $N$, our computer may, due to rounding errors, return a non-zero value on the grid, see Figure~\ref{fig:python}. In particular, in this case $A$ and the projection of $I$ onto the grid are at TV-distance one for all $N\in\N$.  Note in passing that the $L^\infty$-distance in this case decreases with increasing precision $N$, while the TV-distance does not. 

\begin{figure}[ht]
\centering
    \includegraphics[width=12cm]{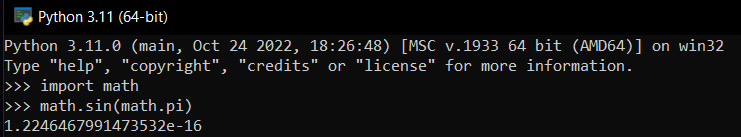}\caption{Rounding errors\label{fig:python} in Python produce singular random variables.}
\end{figure}

\noindent \underline{(III) $L^\infty$-Wasserstein distance of cdf mapping.} In~\cite[\S~7.2, p.~159]{MR2143197}, the authors propose controlling the difference $|F_I(A)-F_I(I)|$ a.s., where $F_I$ denotes the distribution function of $I$. If $F_I$ is continuous, it is then uniformly continuous. If $\omega_I$ denotes the modulus of continuity of $F_I$, then $|F_I(A)-F_I(I)|\les \omega_I(|A-I|)$, in which case, it suffices to control the $L^\infty$-Wasserstein distance as in (I). However 
this distance may also be bounded away from zero with tiny (but positive) probability, thus suffering from the same issue as the one described for $L^\infty$-Wasserstein distance.

\noindent \underline{(IV) Prokhorov distance.} The Prokhorov distance metrises weak convergence, making it a prime candidate for what a ``natural'' distance is. (Recall that, given two probability measures $\mu_1$ and $\mu_2$ on the metric space $(d,\mathbb{S})$, then $d_P(\mu_1,\mu_2)\coloneqq\inf\{\ve>0:\mu_1(F)\les\mu_2(F^\ve)+\ve\text{ for all closed }F\subset\mathbb{S}\}$, where $F^\ve\coloneqq\{x:d(x,F)<\ve\}$, is the Prokhorov distance between them.) By~\cite[Cor.~1]{MR230338}, 
\[
d_P(\mathcal{L}(I),\mathcal{L}(A))
=\inf\{\ve>0:\text{there exists a coupling $\lambda$ between $I$ and $A$ with }\lambda(|I-A|>\ve)\les\ve\}.
\]
In words, the Prokhorov distance is the smallest $\ve>0$ such that, under a suitable coupling and with probability at least $1-\ve$, the distance between the ideal variable $I$ and the output $A$ of the algorithm is at most $\ve$. In this sense, if an algorithm contains $k_1$ numerical inversions and $k_2$ reject sampling steps such that (i) each accept-reject step controls the error of accepting or rejecting a samples by $\ve_0$ and (ii) each numerical inversion step controls the distance between the output and the true inverse by $\ve_0$, then the Prokhorov distance between $I$ and $A$ is bounded by $\max\{k_1,k_2\}\ve_0$. 

The discussion in (I)--(IV) above illustrates how one can quantify a deviation from the strong definition of exact simulation. In this paper we use the following definitions: an algorithm implemented on a computer is \textit{exact} if, given exact values of the iid uniforms and assuming all the elementary operations carried out by the algorithm are exact, the returned value has the correct law (i.e. with zero TV-distance from the target law). In this definition, elementary operations include evaluations of elementary functions and their integrals as well as inversions. Moreover, we say that an algorithm is \emph{practically exact}, with respect to a chosen distance $d$ and a given tolerance $\ve$, if $d(\mathcal{L}(I),\mathcal{L}(A))\les\ve$. 

Our main aim in this paper is to implement a fast (practically) exact algorithm (see Appendix~\ref{app:TSFFP} below) and give theoretical guarantees for its running time, so that the algorithm can be used within a Monte Carlo estimation procedure. In exact simulation algorithms, function inversion is often implicitly assumed to produce exact values and take a bounded number of steps, uniformly in the input (see e.g.~\cite[Table~1, line~2]{chi2016exact}). 
In the case of our main simulation method (\hyperref[alg:improved_triple_temper_stable]{TSFFP-Alg} above), we instead carefully analyse the cost of each Newton--Raphson call needed to perform an inversions within the error margin, smaller than what can be represented on the computer. All other elementary operations, including the evaluation of the definite integrals in \hyperref[alg:undershoot_stable]{SU-Alg} (Step~\ref{line_1:alg:undershoot_stable})
and~\hyperref[alg:psi_2]{$\psi^{(2)}$-Alg} (Step~\ref{step:generate_discrete_rv}) are assumed to have constant complexity (the integrands are well behaved and close to those arising in the representation of a stable density, making the numerical procedures converge fast, see remarks following the algorithms). 

Finally, we note that the error between simulated variables on a grid and their idealised counterparts (e.g. for the increment of Brownian motion) are typically not taken into account when analysing the convergence properties of Monte Carlo estimates. This further motivates our definition of (practically) exact simulation for an implemented algorithm. 

\subsection{\texorpdfstring{\hyperref[alg:improved_triple_temper_stable]{TSFFP-Alg}}{TSFFP-Alg} is practically exact}
\label{app:TSFFP}

Using the definition above, \hyperref[alg:improved_triple_temper_stable]{TSFFP-Alg} is practically exact with respect to the Prokhorov distance and with tolerance equal to $C2^{-N}$, where $C$ is a constant that may be computed in terms of $(\theta,\alpha,q,b(0))$ and $N$ is the number of precision bits (typically, $N=53$). Indeed, \hyperref[alg:improved_triple_temper_stable]{TSFFP-Alg} consists of a loop of at most $1+\lfloor b(0)/R\rfloor$ repeated calls to \hyperref[alg:triple_temper_stable]{TSFP-Alg}. In each call to \hyperref[alg:triple_temper_stable]{TSFP-Alg}, we have (in lines~8-11) 3 accept-reject steps (accounting for all the calls to other algorithms) and at most 2 numerical inversions, each with an error bounded by $2^{-N}$, and (in lines~2--7) a loop where each step calls \hyperref[alg:tempered_stable]{TS-Alg}, containing one numerical inversion and one accept-reject step (again, each with an error bounded by $2^{-N}$). The number of iterations of this loop in \hyperref[alg:triple_temper_stable]{TSFP-Alg} is equal to $1+\lfloor\tau_b/t_*\rfloor$, where, by Markov's inequality and Lemma~\ref{lem:exp_moment}, we have 
\[
\p(1+\lfloor\tau_b/t_*\rfloor\ges n)
\les \me^{-pt_*(n-1)}
\bigg(1+\frac{\me^{uR}}{\theta((u+q)^\alpha-q^\alpha)-p}\bigg),
\]
for any $n\in\N$, $u>0$ and $p<\theta((u+q)^\alpha-q^\alpha)$. Pick $u=(3^{1/\alpha}-1)q$ and $p=\theta q^\alpha$, then we have 
\[
\p(1+\lfloor\tau_b/t_*\rfloor\ges n)
\les\me^{-(\theta q^\alpha-1) t_*(n-1)}
\big(1+\me^{(3^{1/\alpha}-1)qR}/
    (\theta q^\alpha)\big)
\les n_02^{-N},
\]
for all $n\ges n_0$ and some minimal $n_0\in\N$ that can be written explicitly in terms of all other parameters. Thus, the accumulation of errors in Porkhorov distance of these repeated calls to \hyperref[alg:tempered_stable]{TS-Alg} is bounded by $n_02^{-N}$. Hence, \hyperref[alg:improved_triple_temper_stable]{TSFFP-Alg} is practically accurate with tolerance $(n_0+3)(1+b(0)/R)2^{-N}$.

\end{document}